\newtheorem{theorem}{Theorem}[section]
\newtheorem{lemma}[theorem]{Lemma}
\newtheorem{e-proposition}[theorem]{Proposition}
\newtheorem{corollary}[theorem]{Corollary}
\newtheorem{e-definition}[theorem]{Definition\rm}
\newtheorem{remark}[theorem]{\it Remark\/}
\def\og{\leavevmode\raise.3ex\hbox{$\scriptscriptstyle\langle\!\langle$~}}
\def\fg{\leavevmode\raise.3ex\hbox{~$\!\scriptscriptstyle\,\rangle\!\rangle$}}
\def\vs{\vrule width 0cm height 0.1in depth 0in}
\def\fr#1#2{\frac{\displaystyle\vs #1}{\displaystyle\vs #2}}
\def\bint#1#2{ {\displaystyle\vs \int_{#1}^{#2}} }
\def\ie{\emph{i.e. }}
\def\eg{\emph{e.g. }}
\def\cf{\emph{cf. }}
\def\eps{\epsilon}
\def\modu{\,\mathrm{mod}\,}
\def\nequiv{ \equiv \! \! \!\! \! \! \! \! \backslash \;}
\def\Id{\mathrm{Id}}
\def\jo#1{\mathcal{#1}}
\def\supp#1{\textrm{\raisebox{.5ex}{\mbox{$\underset{#1}{\sup}$}}} \:}
\def\somme#1#2{\overset{#2}{\underset{#1}{\sum}}}
\def\bv{\! \big|}
\def\vide{\varnothing}
\def\sm22#1#2#3#4{\left( \begin{smallmatrix}   #1 & #2 \\   #3 & #4 \\  \end{smallmatrix} \right)}
\def\un{1 \!\! \mathrm{l}} 
\def\dd{\mathrm{d} \!}
\def\del{\partial \!}
\def\db{\bar{\partial} \!}
\def\zb{\overline{z}}
\def\zz{\mathbb{Z}}
\def\rr{\mathbb{R}}
\def\cc{\mathbb{C}}
\def\cp{\mathbb{C}\mathrm{P}}
\def\nr#1{\left\| #1 \right\|}
\def\pnr#1{\| #1 \|}
\def\abs#1{\left\lvert #1 \right\rvert}
\def\nwup#1{\left\| #1 \right\|_{W^{1,p}}}
\def\somme#1#2{\overset{#2}{\underset{#1}{\sum}}}
\def\Somme#1#2{\overset{#2}{\underset{#1}{\displaystyle\vs \sum}}}
\def\inter#1#2{\overset{#2}{\underset{#1}{\cap}}}
\def\ssi{\Leftrightarrow}
\def\imp{\Rightarrow}
\def\inj{\hookrightarrow}
\def\surj{\twoheadrightarrow}
\def\img{\mathrm{Im}\,}
\def\diam{\mathrm{Diam}\,}
\def\dim{\mathrm{dim}\,}
\def\tg{\mathrm{T}}
\def\exp{\mathrm{exp}}
\def\wup{W^{1,p}}
\def\uo{{u^0}}
\def\ui{{u^1}}
\def\uor{{u^{0,r}} }
\def\uir{{u^{1,r}}}
\def\ur{{u^r}} 
\def\uoi{{u^{0,1}}}
\def\uoir{{u^{0,1,r}}}
\def\ev{\, e \! v}
\def\timesi{\underset{i\in\zz}{\times}}
\def\liz#1{\ell^\infty(\zz;#1)}
\def\CR{\mathcal{S}}
\def\SR{\Sigma}
\newcounter{rind}
\newcounter{cind}
\def\indr{{ \addtocounter{rind}{1} \therind{} }}
\def\indc{{ \addtocounter{cind}{1} \thecind{} }}
\begin{document}

\title{Complex surfaces and interpolation on pseudo-holomorphic cylinders}
\author{Antoine Gournay}
\date{~} 

\maketitle

\begin{center}
Max Planck Institut für Mathematik, \\
 vivatsgasse 7,\\
 53111 Bonn, \\
Germany \\
\scriptsize{gournay@mpim-bonn.mpg.de}
\end{center}

\medskip

\begin{abstract}

Gromov has shown how to construct holomorphic maps of the plane to a complex manifold with prescribed values on a lattice. In the present paper, a similar interpolation theorem for pseudo-holomorphic maps from the cylinder $\CR$ to an almost-complex manifold $(M,J)$ is proved. Properties of the space of pseudo-holomorphic maps from $\CR$ to $(M,J)$ are derived, in particular a lower bound for the size of this space (in terms of mean dimension). When $M$ is a moduli space of curves, this gives a construction of non-compact complex surfaces. The methods involve an infinite number of surgeries. On the way, a refinement of the  gluing process for two pseudo-holomorphic curves is obtained, establishing the geometric behavior of two glued pseudo-holomorphic curves.

\end{abstract}

\section{Introduction} \label{intro}
The motivation for the present article is to construct a huge family of complex surfaces (and measure it). A convenient way to do so would be to realize them as fibrations, and consequently look for holomorphics maps from a non-compact Riemannian surface to the (compactified) space of genus $g$ curves $\overline{\jo{M}}_g$. The theorem of Gromov unfortunately does not cover this case, nor do essentially linear methods. 
\par We are thus lead to look at different methods, namely, those of gluing. The interpolation will be for maps from the cylinder $\CR = \cc / \zz$ to an almost-complex manifold $(M,J)$ given that there are pseudo-holomorphic curves $\cp^1 \to M$ which intersect in a cyclical fashion, that $J$ is regular (in the sense of definition \ref{regtransi}), and that there is a broader family of pseudo-holomorphic curves covering a neighborhood of one of the curves in the cycle (see section \ref{hypint}, in the setting of complex algebraic varieties this is called ``free'' curve). 
Some consequences of this result will then be explored, namely that the maps obtained actually form a very rich family of (unparametrized) cylinders.
\par The interpolation theorem requiring some technical preliminaries for a precise statement, we will only give the following rough result; see \ref{hypint} for all the details.
\begin{theorem}\label{inter}
Suppose $(M,J)$ is an almost complex manifold and that $J$ is regular (in the sense of definition \ref{regtransi}) of class $C^2$. Suppose there is a sequence of $J$-holomorphic curves $u^k: \cp^1 \to M$ where $k=1,2,\ldots N$ that intersect each other cyclically. Suppose further that $u^1$ belongs to a family of pseudo-holomorphic curves covering a neighborhood of $u^1(z_*)$ for some $z_* \in \cp^1$ (see section \ref{hypint}).
Then there is a family of pseudo-holomorphic map $v:\CR \to M$ such that $v$ is close to this cyclic sequence of curves and the values of $v$ on some lattice $z_0 + iN\zz \subset \CR = \cc / \zz$ can be prescribed to be any point in $U$. \par Furthermore, if $M$ is of (real) dimension greater than $4$, that the $u^k$ do not intersect tangentially and do not possess other intersections then $v^{-1}(v(z))$ is contained in some neighborhood of $z+iN\zz$.
\end{theorem}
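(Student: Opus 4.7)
The strategy is an infinite iteration of Gromov's two-curve gluing, followed by a correction via the implicit function theorem. First I would build an approximate pseudo-holomorphic cylinder. Parametrize $\CR = \cc/\zz$ by its $y$-coordinate $\mathrm{Im}(z) \in \rr$ and assign to each integer height $jN + k$ (with $j \in \zz$, $k \in \{1,\ldots,N\}$) a copy of the sphere $u^k$; consecutive spheres share an intersection point by the cyclic hypothesis. Applying the standard two-curve gluing construction with a small parameter $\rho$ at each of these intersections produces a pre-glued map $v_\rho : \CR \to M$ which essentially coincides with $u^k$ on the $k$-th sphere-region of each period and has a neck of size $\sim \sqrt{\rho}$ near each intersection.

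To incorporate the prescribed values in $U$, replace the copy of $u^1$ sitting at height $jN + 1$ by a curve $u^{1,j}$ drawn from the family $\jo{F}$ covering a neighborhood of $u^1(z_*)$, chosen so that $u^{1,j}$ passes through the prescribed value $p_j \in U$ at the appropriate parameter; since $\jo{F}$ varies continuously, the intersection points with the neighboring spheres $u^2$ and $u^N$ shift only slightly, and the gluing at parameter $\rho$ can be redone. Next, correct $v_\rho$ to a genuine $J$-holomorphic map: regularity of $J$ supplies uniformly bounded right inverses $Q_k$ of the linearizations $D_{u^k}$ (and, by stability, of the nearby $u^{1,j}$), which can be patched by cutoffs into a right inverse $Q_\rho$ of $D_{v_\rho}$ on a suitable weighted Sobolev space over $\CR$. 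A Newton scheme then converges: the initial error $\bar\partial_J v_\rho$ is small (of size controlled by $\rho$), and a small correction $\eta$ vanishing at the prescribed lattice yields a true solution $v = \exp_{v_\rho}(\eta)$. The value-prescription constraint is enforced by restricting to the subspace of variations vanishing at $z_0 + iN\zz$ and using the extra parameters in $\jo{F}$ to restore surjectivity of the constrained linearization.

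The main obstacle is making the gluing uniform over the infinite chain. Weights on the Sobolev spaces must decay exponentially away from each neck so that per-neck contributions sum, the norm $\|Q_\rho\|$ is bounded independently of how much of the chain one takes, and the gluing errors $\|\bar\partial_J v_\rho\|$ are summable. Periodicity of the construction (up to the relatively compact variation among the $u^{1,j}$) should let a single-neck estimate suffice, but one must check that pre-compositions of cutoffs and right inverses across arbitrarily many necks do not amplify constants. This seems to me the delicate analytic point.

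Finally, the assertion on $v^{-1}(v(z))$ is a dimension-count argument: when the real dimension of $M$ exceeds $4$, transverse pseudo-holomorphic spheres generically meet only at isolated points, and with no extraneous intersections assumed, the self-intersections of the pre-glued chain occur exactly along $z + iN\zz$; for $\rho$ small $v$ is a $C^1$-small perturbation of that chain, so $v^{-1}(v(z))$ sits in a prescribed neighborhood of this lattice.
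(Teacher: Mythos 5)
Your overall skeleton (pre-glue a periodic chain of spheres, patch right inverses, correct by a Newton/implicit-function argument, and enforce the prescribed values by exploiting the kernel of $D_{u^1}$ through the evaluation map) is the same as the paper's, but the step you yourself flag as delicate is where your plan actually breaks down. You propose exponentially weighted Sobolev spaces so that ``per-neck contributions sum.'' They cannot: the error $\db_J v_\rho$ is essentially periodic, of comparable size at each of the infinitely many necks, so any norm that sums contributions along the chain is infinite; and if instead the weight decays along the cylinder away from a basepoint, the error becomes finite but the resulting bound on the correction gives no uniform pointwise control far down the cylinder, so neither the closeness to the chain nor the value prescription at all lattice points $z_0+iN\zz$ survives. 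The device the paper introduces for exactly this reason is a hybrid $\ell^\infty(L^p)$ norm (the supremum over unit annuli $[n-\tfrac23,n+\tfrac23]\times S^1$ of local $L^p$ norms, with the matching $\ell^\infty(\wup)$ norm): with it the Sobolev constant, the error $\pnr{\db_J v_\rho}$, and the right inverse are bounded uniformly rather than summably, the infinite gluing reduces to one-gluing-at-a-time estimates via periodicity, and the implicit function theorem (with the evaluation constraint built into the operator $D_u\oplus \dd\ev_*$, whose right inverse is corrected so that the perturbation vanishes at the marked points) applies directly. Without some such uniform framework your Newton scheme has no Banach space to run in.

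The second gap is the ``furthermore'' clause on $v^{-1}(v(z))$. You invoke the standard two-curve gluing, whose approximate solution is constant on a whole annulus around each intersection and is only $O(r)$-controlled there, and then argue that $v$ is a ``$C^1$-small perturbation.'' The implicit function theorem yields $\wup$, hence only $C^0$, control of the correction, and with the standard pre-gluing that correction is of the same order as the neck geometry, so no conclusion about fibers near the necks follows from smallness alone (the pre-glued map already has huge preimage sets on the constant annuli). This is precisely why the paper first proves the refined gluing theorem: the approximate solution is taken of the form $a^0z+a^1\frac{r^2}{z}$ near the neck, with $a^0,a^1$ linearly independent over $\cc$ (this is where $\dim_\rr M\geq 4$ and non-tangency enter), and the correction is shown to be $O(r^{1+\eps})$, strictly smaller than the neck scale $r$; only this order separation, together with the absence of extraneous intersections, localizes $v^{-1}(v(z))$ to a neighborhood of $z+iN\zz$. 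Your transversality/dimension-count remark does not substitute for this quantitative expansion.
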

\par Let us return to the introductory topic. Among the numerous references concerning uniruled varieties, the reader can look at \cite[Chapter 4]{Deb}. The conditions on the maps in this theorem can be reinterpreted (when $M$ is a complex algebraic variety) as the existence of a cycle of curves, one of which should be free (see \cite[Definition 4.5]{Deb}; this is equivalent to $M$ being uniruled \cite[Definition 4.2 and Corollary 4.11]{Deb}). Furthermore, the regularity of $J$, which is required for the invertibility of the linearization of $\db_J$ at the curves $u^k$, is satisfied when there is a ``very free'' curve. It is known that moduli spaces of curves of genus $g$ are unirational when $g \leq 14$ and rationally connected (equivalent to the existence of a ``very free'' curve; see \cite[Definition 4.3 and Corollary 4.17]{Deb}) for $g \leq 15$. They are also uniruled for $g \leq 16$. Thus above result applies at least when $g \leq 15$, but not in genus $g \geq 24$ as the space is then of generic type (see \cite{Far} for a survey on the topic, and \cite{BCF} for results concerning curves with  marked points).
\par The rigidity result (proposition \ref{imcjh}) and the estimate on mean dimension (proposition \ref{dimpos}) will now be used to get some properties of the space of complex surface resulting from theorem \ref{inter}. Suppose $p:V \to \CR$ is a complex surface fibered over the cylinder obtained by applying  theorem \ref{inter} to $\overline{\jo{M}}_g$. Let $\SR$ be a compact curve and $j:\SR \to V$ a  holomorphic map. Then $p\circ j(\SR)$ is a analytic compact connected set, \ie a point. If $\SR$ is smooth of the same genus as a generic fiber then $j(\SR)$ is a fiber or $j$ is constant. Consequently, if $f:V \to V'$ is a holomorphic map, then $p' \circ f$ is constant on the fibers. It factorizes as a holomorphic map $h: \CR \to \CR$. Let $k$ and $k': \CR \to \overline{\jo{M}}_g$ be the classifying maps (with values in the compactified moduli space of genus $g$ curves). If $\dim f(V) >1$, then on $k^{-1}(\overline{\jo{M}}_g)$ (the generic fibers) $k \circ h = k'$, and by continuity this also hold on the singular fibers. By proposition \ref{imcjh} $g$ must be an isomorphism. Thus $V$ and $V'$ are isomorphic and the fibrations equivalent.
\begin{corollary}
Let $g \leq 15$. The spaces of (noncompact) complex surfaces given by fibration of genus $g$ curves over the cylinder (with one marked point) obtained by applying theorem \ref{inter} has positive mean dimension, and two such surfaces are (holomorphically) isomorphic only if they differ by an automorphism of the cylinder.
\end{corollary}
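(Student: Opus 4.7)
The plan is to combine Theorem \ref{inter}, applied to $M = \overline{\jo{M}}_g$, with Propositions \ref{dimpos} and \ref{imcjh}; the paragraph preceding the corollary already supplies the skeleton of the rigidity part, so the task is mostly to pin down each step. To check that Theorem \ref{inter} applies to $\overline{\jo{M}}_g$ for $g \leq 15$, one uses the cited unirationality (respectively rational connectedness) results to produce a cyclic configuration $u^k : \cp^1 \to \overline{\jo{M}}_g$ of rational curves with $u^1$ free, while the existence of a very free curve guarantees regularity of $J$ at the $u^k$ in the sense of definition \ref{regtransi}. The theorem then furnishes a family of holomorphic cylinders $v : \CR \to \overline{\jo{M}}_g$, parametrized by their values on the lattice $z_0 + iN\zz$, which range over a nonempty open set $U \subset \overline{\jo{M}}_g$. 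Each such $v$ is the classifying map of a genus-$g$ fibration over $\CR$; applying Proposition \ref{dimpos} to this parametrized family yields a strictly positive lower bound on the mean dimension, which establishes the first clause.

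For the rigidity clause, let $p : V \to \CR$ and $p' : V' \to \CR$ be two such fibrations with classifying maps $k, k' : \CR \to \overline{\jo{M}}_g$, and suppose $f : V \to V'$ is a holomorphic isomorphism. A generic fiber of $p$ is a smooth compact curve of genus $g$; its image under $p' \circ f$ is a compact connected analytic subset of $\CR$, hence a point, so $p' \circ f$ factors through $p$ as a holomorphic map $h : \CR \to \CR$, and $f$ sends each fiber of $p$ to a fiber of $p'$. The induced isomorphism between generic fibers is an isomorphism of smooth genus $g$ curves, so the classifying data match: $k' \circ h = k$ on the locus where fibers are smooth, and by continuity on all of $\CR$. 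Proposition \ref{imcjh} applied to this identity then forces $h$ to be an isomorphism, i.e.\ an automorphism of $\CR$, so the two fibrations can differ only by precomposition with $h$.

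The most delicate point is the first: showing that for every $g \leq 15$ one can actually produce inside $\overline{\jo{M}}_g$ a cycle of $\cp^1$'s with the fine intersection properties demanded by Theorem \ref{inter} -- non-tangential intersections and no spurious extra intersections -- so that the second half of the theorem applies, giving an essentially injective $v$ and ensuring that a generic fiber of $V$ is identified with a generic fiber of the classifying data. One expects this to follow from combining a chosen free rational curve with a transverse deformation, but verifying the exact intersection pattern on the singular moduli space $\overline{\jo{M}}_g$ (rather than on the smooth locus $\jo{M}_g$) is where most of the work lies. Once this configuration is in place, both clauses follow formally from the two propositions already at hand.
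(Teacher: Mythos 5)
Your argument is correct and follows the paper's own proof (which is precisely the paragraph preceding the corollary) essentially verbatim: rational connectedness/unirationality of $\overline{\jo{M}}_g$ for $g \leq 15$ supplies the hypotheses of theorem \ref{inter}, proposition \ref{dimpos} gives the positive mean dimension, and the factorization of $p' \circ f$ through $h:\CR \to \CR$ together with the matching of classifying maps and proposition \ref{imcjh} gives the rigidity clause. The ``delicate point'' you flag --- exhibiting the cyclic configuration with the required intersection pattern inside the singular compactified moduli space --- is left at exactly the same level of detail in the paper, which simply cites the uniruledness/rational connectedness literature, so your proposal is neither more nor less complete than the original.
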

\par The interpolation result is achieved by gluing the cyclic sequence of curves together while satisfying the additional constraint given by the values we want to be prescribed. The proof will consists in constructing an approximate solution (that is a map that is almost pseudo-holomorphic and passes by the prescribed points) and then deforming it (using an implicit function theorem) to a truly pseudo-holomorphic map. Note that the last point of the theorem is not trivial as the implicit function theorem will blur things. Thus one has to make sure that close to the points where gluing occurs some injectivity is retained. 
\par Section \ref{chadd} will describe how to alter the construction described in \cite{mds1}. Given two pseudo-holomorphic curves, it is known (under proper assumptions on $J$) that there exists a family of pseudo-holomorphic curves that can be obtained by gluing them. However the behavior of these curves is vague at best. 
Here is the important improvement we make. 
\begin{theorem} \label{add}
  Let $(M,J)$ be an almost-complex manifold. Let $u^h : \SR \to M$, where $h \in \{0,1\}$, be two $J$-holomorphic curves such that $u^h(0)=m_0$, $\nr{\dd u^h}_{L^\infty} \leq C $, $J$ is regular in the sense of \cite[Definition 10.1.1]{mds1} and $D_{u^h}$ are surjective. If in a local chart $u^h(z) = a^h z + O(\abs{z}^2) $, then $\exists r_\therind$ such that $\forall r \leq r_\therind$, $\exists u$ a $J$-holomorphic curve such that in a local chart, 
\[
u(z) = a^0 z + a^1\frac{r^2}{z} + O(r^{1+\eps})
\]
for all $z \in A_{r^{4/3},r^{2/3}} =  \{z | r^{4/3} < \abs{z} < r^{2/3} \}$ and where $\eps \in ]0,\frac{1}{3}[$; $r_\therind$ and $c_0$ depend on $C$, $\eps$, $a^h$, the second derivatives of $u^h$, $J$ (up to its second derivatives) and on the norm of the inverse to $D_{u^h}$.
  \end{theorem}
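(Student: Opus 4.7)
My approach is to adapt the two-curve gluing construction of McDuff--Salamon~\cite{mds1} while carefully tracking the pointwise geometry on the neck annulus. In a chart $\phi$ around $m_0$ with $\phi(0)=m_0$ and $d\phi_0 = \mathrm{id}$, write $u^h(z) = a^h z + R^h(z)$ with $R^h(z) = O(\abs{z}^2)$. Let $\beta$ be a smooth radial cutoff equal to $1$ for $\abs{z}\geq r^{2/3}$ and vanishing for $\abs{z}\leq r^{4/3}$. Define the approximate solution $u_{\mathrm{app}}$ equal to $u^0(z)$ for $\abs{z}\geq r^{2/3}$, equal to $u^1(r^2/z)$ for $\abs{z}\leq r^{4/3}$, and on the neck annulus $A_{r^{4/3},r^{2/3}}$ by
\[
\phi^{-1}\bigl(u_{\mathrm{app}}(z)\bigr) = a^0 z + a^1\frac{r^2}{z} + \beta(z) R^0(z) + (1-\beta(z)) R^1(r^2/z).
\]
Both remainders are $O(r^{4/3})$ on the neck, so $u_{\mathrm{app}}(z) = a^0 z + a^1 r^2/z + O(r^{4/3})$ there.

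Next, I would estimate $\bar\partial_J u_{\mathrm{app}}$. This vanishes off the neck because $u^h$ is $J$-holomorphic and $z\mapsto r^2/z$ is a biholomorphism. On $A_{r^{4/3},r^{2/3}}$ the error comes from (i) the cutoff derivatives acting on the $O(r^{4/3})$ remainders and (ii) the discrepancy between $\bar\partial_{J(m_0)}$ and $\bar\partial_J$ applied to the expression $a^0 z + a^1 r^2/z$, which is controlled pointwise by $\abs{J(u_{\mathrm{app}})-J(m_0)}\cdot\abs{\partial u_{\mathrm{app}}}$. In a weighted norm adapted to the cylindrical geometry of the neck (as in~\cite{mds1}), these contributions integrate to $\nr{\bar\partial_J u_{\mathrm{app}}} = O(r^{1+\epsilon'})$ for some $\epsilon' > \epsilon$.

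Then I would construct a right inverse $Q_r$ of the linearised Cauchy--Riemann operator $D_{u_{\mathrm{app}}}$ by gluing the right inverses of $D_{u^0}$ and $D_{u^1}$ (furnished by the regularity of $J$), with operator norm bounded uniformly in $r$; this is the standard step in~\cite{mds1}. A quantitative implicit function theorem then yields $\xi$ small in the weighted norm with $u := \exp_{u_{\mathrm{app}}}(\xi)$ a genuine $J$-holomorphic curve.

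The main obstacle lies in converting the weighted $W^{1,p}$ bound on $\xi$ to the pointwise estimate $\abs{\xi(z)} = O(r^{1+\epsilon})$ on the neck. The weight must be chosen so that Sobolev embedding, applied to each cylindrical unit strip of $A_{r^{4/3},r^{2/3}}$, respects the rescaling and produces no constant that blows up as $r\to 0$. The restriction $\epsilon < 1/3$ reflects that the gluing annulus has conformal modulus $\sim\log(1/r)$, and that the Sobolev loss combined with the nonlinearity of $J$ consumes precisely the slack between $r^{4/3}$ (the size of the remainders $R^h$ on the neck) and $r^{1+\epsilon}$. This pointwise bookkeeping is the essential refinement over the standard gluing theorem of~\cite{mds1}, which only controls $u$ in an integral norm over the glued surface.
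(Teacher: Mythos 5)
There are two genuine gaps. First, your approximate solution is not even continuous: on the neck you interpolate only the remainders $R^0$ and $R^1$, so at $\abs{z}=r^{2/3}$ the neck formula equals $u^0(z)+a^1r^2/z$ while the outer region is $u^0(z)$ (a jump of size $\sim r^{4/3}$), and symmetrically at $\abs{z}=r^{4/3}$ the term $a^0z$ jumps. A discontinuous map is not in $W^{1,p}$ and $\bar\partial_J u_{\mathrm{app}}$ is not in $L^p$, so the implicit function theorem scheme does not start. The cure is not to cut off the remainders inside the annulus but to cut off the \emph{entire} secondary map on annuli lying \emph{outside} $A_{r^{4/3},r^{2/3}}$; this is exactly how the paper's pregluing \eqref{ur} is built (pure sum $u^0(z)+u^1(r^2/z)$ on $A_{r^{2-\alpha},r^{\alpha}}$ with $\alpha=2/3$, logarithmic cutoffs on $A_{r^{\alpha},r^{\gamma}}$ with $\gamma=5/6$ and its mirror image), so that the claimed expansion holds on the middle annulus and the cutoff errors carry the harmless factor $\abs{\ln r^{\alpha-\gamma}}^{-1}$ (lemmas \ref{lpdu}, \ref{orddb}). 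Relatedly, your assertion that the $\bar\partial_J$-error is $O(r^{1+\eps'})$ in a suitable norm is exactly the delicate point: in the paper it forces $2<p<4$, $\alpha\in\,]\tfrac{p}{p+2},\tfrac{p}{2p-2}[$ and $\eps<\tfrac13(\tfrac4p-1)$, which is where the restriction $\eps<1/3$ actually comes from; your conformal-modulus heuristic does not substitute for this computation.

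Second, the uniformly bounded right inverse is precisely \emph{not} ``the standard step in McDuff--Salamon.'' In the standard gluing the preglued map is constant on the neck, and the patching of the inverses $Q_{u^0},Q_{u^1}$ exploits this: the key cancellation (the analogue of \eqref{etar}, where the error reduces to $(\xi^0-\xi_{m_0})\bar\partial\beta$) uses that constant vector fields are annihilated and that the linearizations along the various preglued maps coincide near $m_0$. Here the pregluing is $a^0z+a^1r^2/z+\cdots$ on the neck — the whole point of the refinement — so that argument breaks down, and this is the main technical content of the paper's proof: $J$ is replaced by a structure $J'$ constant on a ball around $m_0$ (section \ref{jpc}), the difference $D^J_u-D^{J'}_u$ is shown to be small uniformly in $r$ (which needs the lower bound on $\abs{\dd u^r}$ near the gluing region, lemma \ref{mindu}, i.e.\ the non-tangency of $a^0$ and $a^1$), and the inverse is built for $J'$ as in section \ref{sinv} and then transferred back. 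Your proposal is silent on all of this, so as written the uniform bound on $Q_r$ — and hence the whole contraction argument — is unsupported.
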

Theorem \ref{add} says that if the vectors $a^0$ and $a^1$ represent the tangent plane of the curves at the intersection in some local chart of the point where the gluing occurs, then the glued curve (of parameter $r$) has the roughly the behavior $u^r(z) = a^0 z + a^1 r^2/z$ close to a ring of radius $r$. This is achieved using a more precise approximate solution to the glued curve. Technical difficulties arise (mainly in the inversion of the linearization of $\db_J$ at the approximate solution), but they can be avoided by modifying the almost-complex structure. 
\par Theorem \ref{add} is of particular interest when the $a^h$ are linearly independent over $\cc$ (which requires that $M$ be of real dimension at least 4). Indeed, then a strangling phenomenon can be shown to happen, see remark \ref{etrang}. These conditions are not required as such in the gluing procedure, but they are essential to \S{}\ref{ntfam} and \S{}\ref{p:simp}. 
\par Section \ref{recinf} will then explain how to pass from the gluing of two curves to the gluing of an infinite number of curves. To do so one only requires to consider an hybrid $\ell^\infty(L^p)$ norm which will preserve the qualities we need of standard $L^p$ norms while reducing the problem of an infinite number of gluing to a finite one.
\par Finally, section \ref{esphol} will show how to simultaneously solve the pseudo-holomorphic equation $\db_J u =0$ in addition to the constraints given by prescribing value at points. Interpolation enables to show that the family of maps from the cylinder to $M$ is very large. This is not so surprising as the existence of a pseudo-holomorphic map $\cp^1 \to M$ will give rise to many maps $\CR \to \cp^1 \to M$. It is important to mention that the maps are not obtained in such a trivial fashion. First, they are of bounded differential. Second, the different maps obtained by the interpolation theorem can be shown to have distinct images (see section \ref{ntfam}). Also, if the maps $u^k$ do not have additional intersections an appropriate choice of parameters is sufficient to ensure that the map does not factor through an holomorphic map $\CR \to \SR$ (see section \ref{p:simp}). Lastly, as in \cite[{\S}3.3]{Gro}, one can show that the family of maps given by the interpolation theorem is of positive mean dimension (see section \ref{s:mdm}). 


\section{Summing $J$-holomorphic curves}\label{chadd}

The first aspect of the theorem we shall prove is the good control of the behavior of the resulting cylinders around the gluing points. But before we move to an infinite number of gluing, we shall at first do so with only two. This is a refinement of the theorem described by McDuff-Salamon in \cite{mds1} on the possibility of gluing two curves, \ie to find a family of curves whose images are close to the union of the images of two curves meeting at $m_0$. In short, it allows us to show that, given two $J$-holomorphic curves $u^0$ and $u^1$ that intersect but are not tangent at a point $m_0$ there exists a family of curves whose image are close to the union of the images of $u^0$ and $u^1$, and whose strangling close to $m_0$ are different.
\par This section describes how to modify this gluing so as to obtain that (in local charts near $m_0$ and $0 \in \cp^1$) in a ring of radius $r$ (around $0 \in \cp^1$) a local expansion would be of the form: $a^0z+ a^1\frac{r^2}{z} + O(r^{1+\eps})$, where $a^0$ and $a^1$ are the tangents to the curves at $m_0$, and $\eps \in ]0, 1/3[$. When $a^0$ and $a^1$ are linearly independent over $\cc$, this information will be used later in section \ref{esphol} to insure that the intersection of a ball of radius $O(r^{1+\eps})$ with the image of the map gives only discs when non-empty. The method is very close to that of \cite[{\S}10]{mds1}, which itself parallels \cite[{\S}7.2]{DK}.
\par Throughout this section $\SR$ will denote a Riemann surface (our interest is restricted to $\cp^1$) and $(M,J)$ will be an almost complex manifold of real dimension at least $4$. The almost complex structure $J$ will be assumed regular in the sense of \cite[Definition 10.1.1]{mds1} for the two curves considered and of class at least $C^2$. In particular, elliptic regularity insures that $J$-holomorphic maps will be at least $C^2$. 

\subsection{Definitions and description of the gluing map}
As we are concerned with local expansions, let us look at the local behavior of a $J$-holomorphic map. Let $a \in \rr^{2n}$ and $z \in \cc$, the product $a z$ means $z a = (x+ i y) a = xa + y J_0 a$, where $J_0:= \sm22{0}{-\un}{\un}{0}$. Note that the local charts will be chosen so that at $0 \in \rr^{2n}$ the almost-complex structure induced by $J$ (which will still be denoted by $J$) will be the usual complex structure, \ie $J(0)=J_0$. Another convention is that the evaluation of $J$ at a point $m$ will be written $J_m$; note that the confusion that could arise between the usual structure and the evaluation of $J$ at zero in a map is not to be worried about as, by choice of local charts, they will be equal. We start by this well-known lemma (e.g. \cite[Proposition 3]{Sik} where the proof is done under the weaker assumption that $J$ is Lipschitz).
  \begin{lemma} \label{devloc}
  Let $J$ be an almost complex structure on $\rr^{2n}$ such that $J(0)=J_0:= \sm22{0}{-\un}{\un}{0}$. Let $u:\cc \to \rr^{2n}$ be a $J$-holomorphic curve such that $u(0)=0$. Then $\exists a \in \rr^{2n}$ such that $u(z) = az + O(\abs{z}^2)$, for $\abs{z}$ small enough. 
  \end{lemma}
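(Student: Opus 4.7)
The plan is to exploit the $C^2$ regularity of $u$ (granted by the elliptic regularity remark made just before the lemma, since $J \in C^2$) and combine the ordinary Taylor expansion at order one with a pointwise evaluation of the pseudo-holomorphic equation at $z = 0$.

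First, I would write $z = s + it$ and recall that the $J$-holomorphic equation reads
\[
\partial_s u + J(u)\, \partial_t u = 0.
\]
Evaluating at $z = 0$ and using $u(0) = 0$ together with the normalization $J(0) = J_0$, this becomes $\partial_s u(0) + J_0 \partial_t u(0) = 0$. Applying $J_0$ to both sides and using $J_0^2 = -\Id$ yields $\partial_t u(0) = J_0 \partial_s u(0)$. Set $a := \partial_s u(0) \in \rr^{2n}$; then the two first-order partial derivatives of $u$ at the origin are already coupled in the manner prescribed by the paper's convention for the multiplication $z \mapsto az$.

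Next, since $u$ is of class $C^2$ in a neighborhood of $0$, Taylor's theorem to first order gives
\[
u(z) = u(0) + s\, \partial_s u(0) + t\, \partial_t u(0) + O(|z|^2) = s a + t J_0 a + O(|z|^2)
\]
for $|z|$ sufficiently small, the constant in $O(|z|^2)$ being controlled by $\nr{d^2 u}_{L^\infty}$ on a small ball around $0$. By the definition $az = (s+it)a := sa + t J_0 a$ recalled just before the lemma, this is exactly $u(z) = az + O(|z|^2)$.

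The only potential subtlety is the regularity needed to Taylor expand; this is handed to us by elliptic regularity applied to the Cauchy--Riemann operator with $J \in C^2$, so there is no real obstacle here. (The genuinely delicate version is Sikorav's, where $J$ is only assumed Lipschitz and one must replace the Taylor argument by a more careful elliptic estimate; this is why the author simply cites \cite{Sik} in the statement.) Altogether the proof should fit in a handful of lines.
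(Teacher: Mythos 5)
Your proof is correct and is essentially the paper's argument in streamlined form: the paper expands $u$ in powers of $z,\bar z$ and observes that the order-zero part of the equation $\bar\partial_J u=0$ forces the coefficient of $\bar z$ to vanish, which is exactly your pointwise identity $\partial_s u(0)+J_0\,\partial_t u(0)=0$ combined with a first-order Taylor expansion. The regularity issue is handled the same way in both (elliptic regularity for $J\in C^2$, with the citation of Sikorav covering the low-regularity case), so nothing is missing.
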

  \begin{proof}
  The notation $(\db_{J_g} f)(z) = \dd f (z) + J_{g(z)} \circ \dd f (z) \circ j $ will be used to insist on the point at which $J$ is evaluated. The first step is to remark that 
\begin{equation} \label{ast1}
  \db_J g = \db_{J''} g + (J_g -J''_g) J'_g (\del_{J'}-\db_{J'})g,
\end{equation}  
for any two complex structures $J'$ and $J''$, and where $\del_J = \db_{-J}$. On the other hand, write $u(z) = \sum_{k,l} z^k \zb^l a_{k,l} + O(\abs{z}^3)$, where $k,l \in \{0,1,2\}^2 \setminus \{0\}^2$, $a_{k,l} \in \rr^{2n}$ and $(s+it)a = a s + (J_0 a) t$. It appears, by choosing $J'=J''=J_0$ in \eqref{ast1} or by looking directly at the expression in local coordinates, that $\db_J u =0$ if and only if
\begin{equation}  \label{dbudev}
    \somme{k,l}{} a_{k,l} l z^k \zb^{l-1} + O(\abs{z}^2)+ (J_{u}-J_0) J_0 \left( \somme{k,l}{} a_{k,l} k z^{k-1} \zb^l- a_{k,l} l z^k \zb^{l-1} + O(\abs{z}^2) \right)  =0.
\end{equation}  
Furthermore, the coefficients $(c_1,c_2)$ of the matrix of $J$ can be expanded: 
\[
(J_{\vec{x}})_{c_1, c_2} =(J_0)_{c_1, c_2} + \somme{\vec{k}}{} b_{c_1,c_2,\vec{k}}(\vec{x})^{\vec{k}} +O(\abs{\vec{x}}^3), 
\]
where $\vec{k} \in \{0,1,2\}^{2n} \setminus \{0\}^{2n}$. consequently, there is only one term of order $0$ in (\ref{dbudev}): $a_{0,1}$. If $u$ is $J$-holomorphic, it's local expansion must be of the form $u(z) = a_{1,0}z + O(\abs{z}^2)$.
  \end{proof}
\par Before we proceed to the proof of theorem \ref{add}, let us note that the assumptions are more restrictive than in the gluing procedure of \cite[{\S}10]{mds1} where curves whose differential at $m_0$ is $0$ can be glued. For the remainder of this section, we shall assume that $a^0$ and $a^1$ are linearly independent (over $\cc$; whence the condition $\dim_\rr M \geq 4$). This assumption is actually not crucial to realize the gluing (though it makes things slightly simpler, see lemma \ref{mindu}), but is required in order to show that the strangling of the different curves is different (see remark \ref{etrang}) and that the resulting curve does not actually pass by $m_0$. 
\par The behavior of the ``summed'' curve is however more precise. Indeed in \cite[{\S}10]{mds1} the curve obtained by gluing is a perturbation of a curve which is constant in a ring; this leads to a curve whose behavior in the given ring is $u(z) = O(r)$. The price to pay to obtain a more precise behavior is that the approximate solution is no longer constant in a ring. When the approximate solution is constant in a ring $A$, the almost-complex structure $J$ is also constant for $z \in A$. Section \ref{jpc} describes how to modify the structure $J$ so as to make it constant near the point of intersection, thus allowing to avoid the difficulty that arises.
\par The main ingredient in the proof remains the implicit function theorem of \cite[{\S}3.5]{mds1}; recall that 
\begin{equation}  \label{cpdef}
s_p:= \sup_{0 \neq f \in C^\infty(\SR)} \fr{\nr{f}_{L^\infty}}{\nwup{f}}
\end{equation}
is the constant of the Sobolev embedding $\wup(\SR,\rr) \inj L^\infty(\SR,\rr)$, which is finite for $p> \dim \SR =2$ in our case (\cf \cite[{\S}6.7]{GT}).
\begin{e-proposition}\label{til} (see \cite[Theorem 3.5.2]{mds1})
Let $\SR$ be a complex manifold of dimension $1$, let $p>2$. $\forall c_\thecind$, $\exists \delta >0$ such that for all volume forms $\dd \textrm{vol}_\Sigma$ on $\Sigma$, all $u \in \wup(\Sigma,M)$, all $\xi_0 \in \wup(\SR,u^*\tg M)$, and all $Q_u: L^p(\SR, \Lambda^{0,1} \otimes_J u^*\tg M) \to \wup(\SR,u^*\tg M)$ satisfying
\[ 
  \begin{array}{ccc}
    s_p(\dd \textrm{vol}_\Sigma) \leq c_\thecind, & \nr{\dd u}_{L^p} \leq c_\thecind, & \nwup{\xi_0} \leq \frac{\delta}{8}, \\
 \nr{ \db_J(\exp_u(\xi_0))}_{L^p} \leq \frac{\delta}{4 c_\thecind}, & D_u Q_u = \un, & \nr{Q_u} \leq c_\thecind,
  \end{array}
\]
there exists an unique $\xi$ such that
\[ 
  \begin{array}[c]{ccc}
    \db_J(\exp_u(\xi_0+\xi))=0, & \nwup{\xi+\xi_0} \leq \delta, & \nwup{\xi} \leq 2 c_\thecind \nr{\db_J(\exp_u(\xi_0))}_{L^p.}
  \end{array}
\]
\end{e-proposition}
The proof of this theorem is a consequence of the implicit function theorem (\cf \cite[Proposition A.3.4]{mds1}), and we refer to \cite[{\S}3.5]{mds1} for the proof. A bound on the second derivative of $\jo{F}_u$ (\cf \cite[{\S}A.3]{mds1}) is required for it to hold. 
\par We start by constructing a family of curves $u^r$ whose local expansion is as required, which satisfy the conditions of the above theorem ($\xi_0$ will be $\equiv 0$) and whose $\db_J$ is of the order of $O(r^{1+\eps})$. Then, the $\xi$ obtained (the perturbation of $u^r$ needed to obtain a true solution) will be bounded in $L^\infty$ (since it is bounded in $\wup$) by $O(r^{1+\eps})$. 
\par Before we describe these maps $u^r$, we have to define cutoff functions which will be very useful. They will be denoted by $\beta$. The definition will not vary much, and, much like the following lemmas, is well-established; see \cite{DK} or \cite{mds1}.
\begin{e-definition} \label{betadef}
Let $\beta_{\delta,\eps} :\rr^2 \to \rr$ be the function defined by:
\[  
\beta_{\delta,\eps}(z)= \left\{
\begin{array}{llrcll}
    1                                                & \textrm{if}&          &\abs{z}& < \delta \\
    \fr{\ln \eps -\ln \abs{z}}{\ln \eps -\ln \delta} & \textrm{if}& \delta < &\abs{z}& < \eps \\
    0                                                & \textrm{if}& \eps   < &\abs{z}&        &.
\end{array} \right.
\]
\end{e-definition}
This cutoff function has many useful properties, as can be seen in the following two lemmas:
\begin{lemma} \label{l2dbeta}
$ {\displaystyle\vs \int_{\rr^2} }  \abs{ \nabla  \beta_{\delta,\eps} }^2 = \fr{2 \pi}{\ln (\eps/\delta)}$.
\end{lemma}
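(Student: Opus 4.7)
The plan is a direct computation in polar coordinates, exploiting the fact that $\beta_{\delta,\eps}$ is radial and that its gradient is supported in the annulus $A = \{\delta < |z| < \eps\}$.

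First I would observe that outside $A$ the function is locally constant, so $\nabla\beta_{\delta,\eps} = 0$ on $\{|z| < \delta\} \cup \{|z| > \eps\}$, and hence the integral reduces to one over $A$. Inside $A$, writing $z = re^{i\theta}$, the function depends only on $r$ via $\beta_{\delta,\eps}(z) = (\ln\eps - \ln r)/(\ln\eps - \ln\delta)$, so $\nabla\beta_{\delta,\eps}$ is purely radial with
\[
\frac{\partial \beta_{\delta,\eps}}{\partial r} = -\frac{1}{r \, \ln(\eps/\delta)}.
\]

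Then I would compute directly:
\[
\int_{\rr^2} |\nabla \beta_{\delta,\eps}|^2 \, dA = \int_0^{2\pi} \int_\delta^\eps \frac{1}{r^2 \, (\ln(\eps/\delta))^2} \, r \, dr \, d\theta = \frac{2\pi}{(\ln(\eps/\delta))^2} \int_\delta^\eps \frac{dr}{r} = \frac{2\pi}{\ln(\eps/\delta)}.
\]

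There is no real obstacle here; the only minor point to check is the absence of boundary contributions from the singularities of $\nabla\beta_{\delta,\eps}$ at $|z| = \delta$ and $|z| = \eps$, but since $\beta_{\delta,\eps}$ is globally Lipschitz (hence in $W^{1,\infty}$) and the radial derivative has only jump discontinuities on these circles, the weak gradient coincides with the classical gradient on $A$ and the above computation is rigorous. The formula also nicely exhibits the logarithmic decay that makes this cutoff suitable for conformal estimates in the subsequent gluing arguments.
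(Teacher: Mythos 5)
Your computation is correct: the gradient is radial, supported on the annulus, and the polar-coordinate integral gives exactly $2\pi/\ln(\eps/\delta)$, with the Lipschitz regularity remark adequately dismissing any boundary concerns. The paper itself omits the proof, calling the lemma well-established and pointing to \cite{DK} and \cite{mds1}; your direct calculation is precisely the standard argument intended there, so there is nothing further to reconcile.
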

In particular, this first lemma shows that this family contains a limit case of the Sobolev embedding. Indeed, for fixed $\eps$ and if $\delta \to 0$, the function obtained is in $W^{1,2}$, but not in $L^\infty$. The second lemma is also true when $p<2$ without even needing to assume that $\xi(0)=0$.
\begin{lemma} \label{dbetaxi}
Let $\beta$ be as in definition \ref{betadef}, let $\xi \in W^{1,p}(B_{\eps})$ where $p>2$ be such that $\xi(0)=0$, then $\exists s_H$ such that: $\nr{(\nabla \beta) \cdot \xi}_{L^p(A_{\delta,\eps})}  \leq \fr{ (2 \pi)^{1/p}  s_H}{\ln(\eps/\delta)^{1-1/p}} \nr{\xi}_{W^{1,p}(B_{\eps})}$.
\end{lemma}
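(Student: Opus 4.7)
The plan is to combine two standard ingredients: an explicit computation of $\abs{\nabla \beta_{\delta,\eps}}$ on the annulus $A_{\delta,\eps}$, and the Morrey embedding applied to $\xi$ vanishing at $0$.

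First I would compute, in polar coordinates on the annulus where $\beta_{\delta,\eps}(z) = (\ln\eps - \ln\abs{z})/\ln(\eps/\delta)$, that
\[
\abs{\nabla \beta_{\delta,\eps}(z)} = \fr{1}{\abs{z}\,\ln(\eps/\delta)} \quad \textrm{on } A_{\delta,\eps},
\]
and vanishes elsewhere. Next, since $p > 2$ and $\SR$ (or a disc) has real dimension $2$, the Morrey embedding gives $W^{1,p}(B_{\eps}) \inj C^{0,1-2/p}(B_{\eps})$, with some constant $s_H$; combined with the hypothesis $\xi(0)=0$ this yields the pointwise Hölder bound
\[
\abs{\xi(z)} \leq s_H \, \abs{z}^{1-2/p}\, \nwup{\xi} \quad \textrm{for } z \in B_{\eps}.
\]

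Now I would plug both estimates into the $L^p$-norm and integrate in polar coordinates:
\[
\nr{(\nabla \beta) \cdot \xi}_{L^p(A_{\delta,\eps})}^p \leq \int_{A_{\delta,\eps}} \fr{s_H^p \abs{z}^{p-2} \nwup{\xi}^p}{\abs{z}^p (\ln(\eps/\delta))^p}\, \dd z = \fr{s_H^p \nwup{\xi}^p}{(\ln(\eps/\delta))^p} \int_{A_{\delta,\eps}} \fr{\dd z}{\abs{z}^2}.
\]
The last integral is exactly $2\pi \ln(\eps/\delta)$, so the right-hand side simplifies to $\fr{2\pi\, s_H^p}{(\ln(\eps/\delta))^{p-1}} \nwup{\xi}^p$. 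Taking the $p$-th root produces the stated inequality with the factor $(2\pi)^{1/p} s_H / \ln(\eps/\delta)^{1-1/p}$.

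There is no real obstacle here: the only subtle point is that the bound $\abs{\xi(z)} \leq s_H \abs{z}^{1-2/p}\nwup{\xi}$ precisely cancels the $\abs{z}^{-p}$ weight down to the scale-invariant $\abs{z}^{-2}$, turning what would otherwise be a divergent integral into a factor $\ln(\eps/\delta)$ that only slightly weakens the $(\ln(\eps/\delta))^{-p}$ prefactor. This is also why the hypothesis $p>2$ is essential: it is what makes the Morrey embedding available and gives the correct Hölder exponent $1-2/p > 0$ to beat the singularity of $\nabla \beta$ at the origin. The remark that the statement survives for $p<2$ without $\xi(0)=0$ would instead follow from a direct Hölder-inequality estimate $\nr{(\nabla\beta)\xi}_{L^p} \leq \nr{\nabla\beta}_{L^p}\nr{\xi}_{L^\infty}$, which is a separate and simpler computation that I would treat as a short remark rather than repeat in the main argument.
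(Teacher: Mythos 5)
Your proof is correct and is exactly the standard argument the paper has in mind (it omits the proof, referring to \cite{DK} and \cite{mds1}): the explicit bound $\abs{\nabla\beta} = \big(\abs{z}\ln(\eps/\delta)\big)^{-1}$ on the annulus, the scale-invariant Morrey/H\"older estimate $\abs{\xi(z)}\leq s_H\abs{z}^{1-2/p}\nwup{\xi}$ using $\xi(0)=0$ and $p>2$, and the polar-coordinate integration of $\abs{z}^{-2}$ giving the factor $2\pi\ln(\eps/\delta)$, which yields precisely the stated constant. Your closing remarks on the role of $p>2$ and the $p<2$ case are also accurate.
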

\indent To find a $J$-holomorphic curve with the desired local behavior, an intuitive idea would be to add up the local expansions of two curves, namely $\uo(z)$ and $\ui(r^2/z)$, when $\abs{z}$ is close to $r$ and to get back to either map outside and inside the ring. Addition does not exist in manifolds, thus it is necessary to choose local charts in order to achieve this. In the resulting formula, maps should be seen as functions from (an open set of) $\cc$ to (an open set of) $\rr^{2n}$. The family of maps $u^r$ will be defined as follows:
\begin{equation} \label{ur}
  u^r (z) = \left\{ 
    \begin{array}{llrcl}
      \ui(\frac{r^2}{z})                             &\textrm{if}&               &\abs{z}&<  r^{2-\gamma} \\
      \beta(\frac{r^2}{z}) \uo(z)+ \ui(\frac{r^2}{z})&\textrm{if}& r^{2-\gamma} <&\abs{z}&< r^{2-\alpha} \\
      \uo(z)+\ui(\frac{r^2}{z})                      &\textrm{if}& r^{2-\alpha} <&\abs{z}&< r^\alpha \\
      \uo(z)+\beta(z) \ui(\frac{r^2}{z})             &\textrm{if}& r^\alpha     <&\abs{z}&< r^\gamma \\
      \uo(z)                                         &\textrm{if}& r^\gamma     <&\abs{z}&
    \end{array} \right.
\end{equation}
where $0<\gamma < \alpha <1$, and the cutoff function is $\beta= \beta_{r^\alpha,r^\gamma}$ (see definition \ref{betadef}).

\subsection{Metrics and estimates} \label{smet}

Before we can estimate the norms of $\dd u^r$ and of $\db_J u^r$ (in $L^p$), we need to specify the metric on the domain $\SR$. When $\abs{z}> r$ the curve defined by $\ur$ will be close to $\uo$, and when $\abs{z}< r$, $\ur(r^2/z)$ resembles $\ui(z)$. These two subsets of the domain will play a similar role; it is natural to give them equal weights (at the domain). Intuitively, this also avoids the norm of the differential becoming large by giving to regions whose energy is of the same magnitude equal weights in the domain. This metric will be the usual (Fubini-Study) metric when $\abs{z}> r$, and the one induced by $z \mapsto \frac{r^2}{z}$ when $\abs{z} <r$ (see if necessary figure in the version available on the author's website). More precisely, the metric will be $g^r := (\theta^r)^{-2}(\dd s^2 + \dd t^2)$, where 
\[  
\theta^r(z) = \left\{
  \begin{array}{ll}
    r^2 + \abs{z}^2/r^2 & \textrm{si } \abs{z}<r \\
    1+ \abs{z}^2        & \textrm{si } \abs{z}>r
  \end{array} \right.
\]
\indent It might seem necessary to work with norms that take into account the two distinct regions, but since the situation is symmetric, estimates valid on a region will hold on the other. A more precise discussion can be found in \cite[{\S}10.3]{mds1}. We will only note that the volume remains bounded $Vol(\SR) \leq 2\pi$. The next lemma, taken from  \cite[{\S}10.3]{mds1} says that Sobolev constant behaves similarly.
\begin{lemma} \label{csob}
  The constant $s_p$ (\cf \eqref{cpdef}) for the metric $g^r$ remains bounded independently of $r$.
\end{lemma}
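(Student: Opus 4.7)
The strategy is to exploit the symmetry built into $g^r$ and reduce the statement to a Sobolev embedding on a fixed Riemannian manifold. The inversion $\iota : \SR \to \SR$, $\iota(z) = r^2/z$, is an isometry of $g^r$: a direct computation at $z$ and $\iota(z)$, using the two pieces of the definition of $\theta^r$, gives $\iota^* g^r = g^r$ on both $\{|z|<r\}$ and $\{|z|>r\}$ (the factors $r^4/(|z|^2+r^4)^2$ and $r^4/(r^4+|z|^2)^2$ match). The involution $\iota$ exchanges the closed disks $D^+ := \{|z|\geq r\}\cup\{\infty\}$ and $D^- := \{|z|\leq r\}$, and on $D^+$ the metric $g^r$ is by construction the Fubini--Study metric $g_{\mathrm{FS}}$ of $\cp^1$. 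Hence both $(D^+, g^r)$ and $(D^-, g^r)$ are isometric to the fixed Riemannian domain
\[
D_r := \bigl(\{|w|\geq r\}\cup\{\infty\},\ g_{\mathrm{FS}}\bigr)\ \subset\ (\cp^1,\, g_{\mathrm{FS}}).
\]

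For any $f \in C^\infty(\SR)$, the intersection $D^+ \cap D^- = \{|z|=r\}$ has measure zero, so
\[
\|f\|_{L^\infty(\SR)} = \max\bigl(\|f|_{D^+}\|_{L^\infty},\ \|f|_{D^-}\|_{L^\infty}\bigr), \qquad \|f\|_{\wup(\SR, g^r)}^p = \|f|_{D^+}\|_{\wup(D^+)}^p + \|f|_{D^-}\|_{\wup(D^-)}^p,
\]
and through the isometries each restricted norm coincides with the corresponding norm on $D_r$. Applying the Sobolev embedding on each disk and taking the maximum therefore yields $s_p(g^r) \leq s_p(D_r,\, g_{\mathrm{FS}})$, where the right-hand side is the Sobolev embedding constant of a Lipschitz subdomain of the \emph{fixed} compact Riemannian manifold $(\cp^1,\, g_{\mathrm{FS}})$.

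The remaining task is to bound $s_p(D_r,\, g_{\mathrm{FS}})$ uniformly for, say, $r \in (0, \tfrac12]$. The domain $D_r$ exhausts the full sphere as $r\to 0$, and a uniform bound follows once one constructs an extension operator $E : \wup(D_r) \to \wup(\cp^1,\, g_{\mathrm{FS}})$ with $\|E\|$ bounded independently of $r$. Such an $E$ is produced by reflecting across the boundary circle $\{|w|=r\}$ in the essentially Euclidean neighborhood $\{|w| \leq 1\}$ (where $g_{\mathrm{FS}}$ is bi-Lipschitz to the flat metric with $r$-independent constants), patched with a fixed cutoff further away from $w=0$; then $\|f\|_{L^\infty(D_r)}\leq s_p(\cp^1, g_{\mathrm{FS}}) \|E\| \|f\|_{\wup(D_r)}$, giving the desired uniform bound. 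The uniformity of $\|E\|$ is the only non-routine point; the rest is pure bookkeeping with the involution $\iota$, exactly as in the argument of \cite[\S 10.3]{mds1}.
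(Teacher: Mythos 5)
Your reduction is correct and is essentially the route the paper itself takes: the paper offers no argument for Lemma \ref{csob} beyond the citation of \cite[{\S}10.3]{mds1}, and the proof of the analogous Lemma \ref{csobli} invokes exactly the fact you isolate, namely that $\cp^1$ with a small disc removed has a Sobolev constant that stays bounded as the radius of the disc tends to $0$. Your verification that $z \mapsto r^2/z$ is an isometry of $g^r$ exchanging $\{|z| \leq r\}$ and $\{|z| \geq r\}$, the splitting of the $W^{1,p}$ norm over the two halves, and the resulting inequality $s_p(g^r) \leq s_p(D_r, g_{\mathrm{FS}})$ are all fine.

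The gap is precisely in the step you yourself flag as the only non-routine one. The extension operator you sketch -- reflection $\Phi(w) = r^2/\bar w$ across $\{|w| = r\}$, cut off at a fixed scale -- does not have norm bounded independently of $r$ when $p > 2$. The reflection multiplies gradients by $|D\Phi(w)| = r^2/|w|^2 = |v|^2/r^2$, and the change of variables gives $\int_{\{|w|<r\}} |\nabla (g \circ \Phi)|^p \, dA = \int_{\{|v|>r\}} |\nabla g(v)|^p \, (|v|/r)^{2p-4} \, dA(v)$; since $2p-4 > 0$, reflecting data sitting at distance $|v| \sim 1$ from the puncture into the $r$-disc costs a factor of order $r^{(4-2p)/p} \to \infty$, and the same blow-up affects the term in which the derivative falls on a cutoff supported at fixed scale. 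To make a reflection argument work you must (i) reflect only a collar of width comparable to $r$ around the circle, where the conformal factor of $\Phi$ is bounded by absolute constants, and (ii) first subtract the mean value $a$ of $f$ over that collar, so that the cutoff-derivative term, of size $1/r$, acts on $f - a$, whose $L^p$ norm is at most $C\, r \, \|\nabla f\|_{L^p}$ by the scaled Poincar\'e inequality on the collar; one then extends by the constant $a$ over the remainder of the disc. Alternatively, drop extension operators altogether and prove the uniform embedding on $\{r \leq |v| \leq 1\}$ directly by a Morrey-type chaining along dyadic annuli $\{2^k r \leq |v| \leq 2^{k+1} r\}$: the oscillation across the $k$-th annulus is bounded by $C (2^k r)^{1-2/p} \|\dd f\|_{L^p}$ and the resulting series converges precisely because $p > 2$. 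Either repair closes the gap; as written, the asserted uniform bound on $\|E\|$ is not established, which is the whole content of the lemma once the symmetry reduction is made (the paper sidesteps this by citing \cite{mds1}).
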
 
\indent It is now possible to evaluate the $L^p$ norms of $\dd \ur$ and $\db_J u^r$ in order to satisfy the assumptions of proposition \ref{til}. Our starting point is to bound the norm of powers of $z$: 
\begin{lemma}\label{lpz} 
Let $r >0$, $l,l' \geq 1$, $0\leq \delta <\eps \leq 1$, and $\nr{\cdot}_{L^p(A_{r^\eps,r^\delta})}$ denote the $L^p$ norm restricted to the ring $A_{r^\eps,r^\delta}  = \{z| r^\eps < \abs{z} < r^\delta\} $. Then
\[ 
  \begin{array}{lll}
    \nr{z^l}_{L^p( A_{r^\eps,r^\delta})}
          &= \left( \frac{2 \pi (1-r^{(\eps-\delta) (2+lp) })}{2+lp} \right)^{1/p}r^{\delta(l+2/p)} &\sim K_{\eps,\delta,p,l} r^{\delta(l+2/p)} \\
    \nr{\frac{r^{l'}}{z^l}}_{L^p,(A_{r^\eps,r^\delta})}
          &= \left( \frac{2 \pi (1-r^{(\eps-\delta) (lp-2) })}{lp-2} \right)^{1/p}r^{l'+\eps(-l+2/p)} &\sim K'_{\eps,\delta,p,l} r^{l'+\eps(-l+2/p)}\\
  \end{array}
\]
where $K_{\eps,\delta,p,l}$ and $K'_{\eps,\delta,p,l}$ are the limits as $r \to 0$ of the terms before the powers of $r$.
\end{lemma}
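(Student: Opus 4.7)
The lemma is a pair of direct computations in polar coordinates; I will sketch both, identify the asymptotic constants, and comment on the one point that merits justification.

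Writing $z = \rho e^{i\theta}$, both integrands are radial, so each $L^p$ integral reduces to a one-dimensional integral over $\rho \in [r^\eps, r^\delta]$ after integrating out the angle. For $|z^l|^p = \rho^{lp}$, I would antiderivate to obtain $\frac{2\pi}{lp+2}(r^{\delta(lp+2)} - r^{\eps(lp+2)})$, factor $r^{\delta(lp+2)}$ out of the parenthesis, and take the $p$-th root to recover the first formula. For $|r^{l'}/z^l|^p = r^{l'p}\rho^{-lp}$ the same procedure, valid for $lp \neq 2$, yields $\frac{2\pi r^{l'p}}{2-lp}(r^{\delta(2-lp)} - r^{\eps(2-lp)})$; factoring out $r^{l'p + \eps(2-lp)}$ and using $-(2-lp)^{-1} = (lp-2)^{-1}$ produces the second formula, the exponent $l' + \eps(-l+2/p)$ arising as $[l'p + \eps(2-lp)]/p$. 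For the asymptotic constants, the hypothesis $\delta < \eps$ forces $(\eps-\delta)(lp+2)>0$, so the first residual $r^{(\eps-\delta)(lp+2)}$ vanishes as $r \to 0$ and $K_{\eps,\delta,p,l} = (2\pi/(lp+2))^{1/p}$; in the regime $lp > 2$, which is the only case in which $z\mapsto 1/z^l$ is locally $L^p$ near $0$ and the case in which the lemma will later be applied, the second exponent $(\eps-\delta)(lp-2)$ is likewise positive and $K'_{\eps,\delta,p,l} = (2\pi/(lp-2))^{1/p}$.

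I do not expect any real obstacle: this is a bookkeeping lemma whose role is to feed the explicit scaling exponents $r^{\delta(l+2/p)}$ and $r^{l' + \eps(-l+2/p)}$ into the $L^p$-estimates for $\dd u^r$ and $\db_J u^r$ carried out in the following subsections. The only point that requires a word is the choice of measure: since $\delta,\eps \in [0,1]$ and $r < 1$, the annulus $A_{r^\eps,r^\delta}$ lies in the outer region $|z| \in [r,1]$ where $\theta^r(z) = 1+|z|^2 \in [1,2]$, so the $g^r$-volume form $(\theta^r)^{-2}\dd s\,\dd t$ differs from the Euclidean area element only by a factor bounded independently of $r$. Consequently the explicit Euclidean computation above produces the correct $r$-scaling, which is all that the later applications require.
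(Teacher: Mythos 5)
Your computation is correct and is essentially the paper's own proof: a direct polar-coordinate integration giving $2\pi\bigl[\rho^{2+lp}/(2+lp)\bigr]_{r^\eps}^{r^\delta}$, with the second formula obtained by the same manipulation (the paper simply states it follows from the first). Your added remarks on the sign conditions for the constants and on the bounded conformal factor of $g^r$ on the outer region are fine and do not change the argument.
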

\begin{proof}
It is a direct calculation, valid for $l \neq -2/p$:
\[
  \begin{array}{ll}
   \nr{z^l}_{L^p(A_{r^\eps,r^\delta})}^p
        &= \bint{A_{r^\eps,r^\delta}}{} \rho^{pl+1}  \dd \rho \dd \theta \\
        &= 2 \pi \left[ \fr{\rho^{2+lp}}{2+lp} \right]_{r^\eps}^{r^\delta}  \\
        &= \left( \frac{2 \pi (1-r^{(\eps-\delta) (2+lp) })}{2+lp} \right) r^{\delta(lp+2)} \\
  \end{array}
\]
A simple manipulation of this equality gives the second estimation.
\end{proof}
\begin{lemma} \label{lpdu} Let $r_\indr$ be such that $| \ln (r_\therind^{\alpha-\gamma} ) |^{-1}<1$, then
\[ 
\forall r<r_\therind, \nr{\dd u^r}_{L^p} \leq \nr{\dd \uo}_{L^p}+\nr{\dd \ui}_{L^p}+ c_\indc r^{2/p}, 
\]
where $c_\thecind = (4 K'_{\alpha,\gamma,p,2}r^{2(1-\alpha)}+ 2 K'_{1,\alpha,p,2})C $ and $C \geq \max(\nr{\dd \uo}_{L^\infty} , \nr{\dd \ui}_{L^\infty})$.
\end{lemma}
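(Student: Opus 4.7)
The plan is to rewrite the piecewise formula \eqref{ur} in the uniform guise
\[
u^r(z) = \beta(\phi_r(z))\, u^0(z) + \beta(z)\, u^1(\phi_r(z)),
\]
with $\phi_r(z) = r^2/z$ and the convention $\beta \equiv 1$ on $\{|z| < r^\alpha\}$, $\beta \equiv 0$ on $\{|z|>r^\gamma\}$. The conformal factor $\theta^r$ is chosen precisely so that $\theta^r(\phi_r(z))\, |\phi_r'(z)| = \theta^r(z)$, which makes $\phi_r$ an isometry of $g^r$ exchanging the two halves $\{|z|>r\}$ and $\{|z|<r\}$. This symmetry will allow me to do all the work on the outer half and transfer the result.

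Differentiating and applying the triangle inequality produces four summands. The two \emph{main} terms $\beta(\phi_r(z))\dd u^0(z)$ and $\beta(z)\,\dd(u^1\circ\phi_r)(z)$ have coefficient pointwise bounded by $1$, so their $L^p$ norms are controlled respectively by $\|\dd u^0\|_{L^p}$ and, after the $\phi_r$-isometric change of variables, by $\|\dd u^1\|_{L^p}$. The two \emph{cutoff} terms $u^0(z)\,\dd[\beta\circ\phi_r](z)$ and $u^1(\phi_r(z))\,\dd\beta(z)$ are supported on the inner and outer transition rings respectively. Using $u^h(0)=0$ together with $\|\dd u^h\|_{L^\infty}\leq C$ yields $|u^0(z)|\leq C|z|$ and $|u^1(\phi_r(z))|\leq Cr^2/|z|$; combined with the standard estimate $|\dd\beta(z)|\leq 1/(|z|(\alpha-\gamma)|\ln r|)$, both terms reduce to $\|r^2/z^2\|_{L^p(A_{r^\alpha,r^\gamma})}$ times a bounded constant. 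Lemma \ref{lpz} with $l=l'=2$ and $(\eps,\delta)=(\alpha,\gamma)$ produces the $K'_{\alpha,\gamma,p,2}r^{2(1-\alpha)}$ contribution, and the assumption $|\ln r^{\alpha-\gamma}|^{-1}<1$ is exactly what absorbs the logarithmic prefactor coming from $|\dd\beta|$. Separately, restricting the first main term to the middle ring $A_{r,r^\alpha}$ (and its $\phi_r$-image) and estimating $|\dd(u^1\circ\phi_r)| \leq C\, r^2/|z|^2$ there via $\|\dd u^1\|_{L^\infty}\leq C$, an application of Lemma \ref{lpz} with $(\eps,\delta)=(1,\alpha)$ produces the $K'_{1,\alpha,p,2}r^{2/p}$ contribution.

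The main bookkeeping subtlety, which the uniform decomposition above sidesteps, is to avoid double-counting $\|\dd u^0\|_{L^p}$ and $\|\dd u^1\|_{L^p}$ across the five pieces of \eqref{ur}: since the two main coefficients $\beta(\phi_r(z))$ and $\beta(z)$ are bounded by $1$, each of $\|\dd u^0\|_{L^p}$ and $\|\dd u^1\|_{L^p}$ enters the final bound exactly once. Combining all four contributions and using $\phi_r$-symmetry to handle the inner transition and middle rings in parallel to the outer ones yields the claimed inequality $\|\dd u^r\|_{L^p}\leq \|\dd u^0\|_{L^p}+\|\dd u^1\|_{L^p}+c\, r^{2/p}$ with $c$ as in the statement.
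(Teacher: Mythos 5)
Your argument is correct and is essentially the paper's own proof in a tidier package: the same product-rule decomposition of $\dd u^r$, the same pointwise bounds $|u^h(z)|\leq C|z|$ and $|\nabla\beta(z)|\leq \big(|z|\,|\ln r^{\alpha-\gamma}|\big)^{-1}$, the same applications of Lemma \ref{lpz} on $A_{r^\alpha,r^\gamma}$ and $A_{r,r^\alpha}$, and the inversion $z\mapsto r^2/z$ to transfer the estimates to the inner half, which is exactly the ``symmetry'' the paper invokes. One small slip: the conformal identity should read $\theta^r(\phi_r(z)) = |\phi_r'(z)|\,\theta^r(z)$ rather than $\theta^r(\phi_r(z))\,|\phi_r'(z)| = \theta^r(z)$; your conclusion that $\phi_r$ is a $g^r$-isometry is nevertheless correct, so the argument stands.
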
 
\begin{proof}
In the region $r<\abs{z}<r^\alpha$ this is a simple assertion: 
\[
\nr{\dd u^r}_{L^p(A_{r,r^\alpha})} \leq \nr{\dd u^0}_{L^p(A_{r,r^\alpha})} + \nr{\frac{r^2}{z^2}}_{L^p(A_{r,r^\alpha})} \nr{\dd \ui}_{C^0} \leq \nr{\dd u^0}_{L^p(A_{r,r^\alpha})}  + C K'_{1,\alpha,p,2} r^{2/p}.
\]  
\indent Whereas when $r^\gamma < \abs{z}$, it is trivial since $\dd u^r = \dd u^0$. On $A_{r^\alpha,r^\gamma}$, a choice of a local chart and a local expansion for $\ui$ is needed: if $r$ is small, then $\frac{r^2}{z}$ is also small in the given region, $r^{2-\alpha} >\abs{\frac{r^2}{z}}>r^{2-\gamma}$. Indeed, 
\[
\nr{\dd u^r}_{L^p(A_{r^\alpha,r^\gamma})} \leq \nr{\dd u^0}_{L^p(A_{r^\alpha,r^\gamma})} + \nr{\dd \bigg(\beta(\abs{z}) \ui(\frac{r^2}{z}) \bigg)}_{L^p(A_{r^\alpha,r^\gamma})},
\]
and since $\abs{\ui (z)} \leq C \abs{z}$, the second term can be written as 
\[
  \begin{array}{rl}
    \nr{\dd (\beta(z) \ui(\frac{r^2}{z}))}_{L^p} 
     &\leq \nr{\frac{\ui(\frac{r^2}{z})}{\abs{z} \ln{r^{\alpha-\gamma}}}}_{L^p} + \nr{ \dd (\ui(\frac{r^2}{z})) }_{L^p} \\
     &\leq \nr{C \frac{r^2}{z^2}  }_{L^p} \abs{\ln{r^{\alpha-\gamma}}}^{-1} +  \nr{\dd \ui}_{C^0} \nr{\frac{r^2}{z^2}}_{L^p}. \\
  \end{array}
\]
As the terms appearing are of the form $\frac{r^2}{z^2}$, and using lemma \ref{lpz}, 
\[
\nr{\dd (\beta(z) \ui\bigg(\frac{r^2}{z}\bigg))}_{L^p} 
     \leq C K'_{\alpha,\gamma,p,2} (1+\abs{\ln{r^{\alpha-\gamma}}}^{-1}) r^{(1-\alpha)(2-2/p)}r^{2/p}.
\]
The contribution of the region $r^\alpha <\abs{z}<r^\gamma$ to $\nr{\dd u^r}_{L^p}$ tends to zero $0$ as $r \to 0$ faster than $r^{2/p}$. The final result follows from the symmetry which yields the same conclusion on the region where $\abs{z} <r$.
\end{proof}
\indent Our goal being to give a local expansion at order $1$, we have to show that the $L^p$ norm of $\db_J u^r$ (which bounds the $\wup$ norm and consequently the $L^\infty$ norm of the perturbation necessary to obtain a true solution) is $O(r^{1+\eps})$ when $z$ is of norm close to $r$.
\begin{lemma} \label{orddb}
Take $2<p<4$. Let $\alpha \in ]\frac{p}{p+2}, \frac{p}{2p-2}[ $ then there exists positive numbers $\eps < \min (\alpha(1+2/p)-1, 1-\alpha(2-2/p) )$, $r_\indr$ and $c_\indc$ (both depend on the second derivatives of $\uo$ and $\ui$, and on the product of the derivatives of $J$ with $C$) such that, $\forall r<r_\therind, \pnr{\db_Ju^r}_{L^p} \leq c_\thecind r^{1+\eps}$. 
\end{lemma}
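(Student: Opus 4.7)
The plan is to estimate $\pnr{\db_J u^r}_{L^p}$ region by region on the five-piece decomposition \eqref{ur}. In the two extremal pieces $\{\abs{z}>r^\gamma\}$ and $\{\abs{z}<r^{2-\gamma}\}$, $u^r$ coincides with $\uo(z)$ or $\ui(r^2/z)$ respectively; both are $J$-holomorphic (the latter because $z\mapsto r^2/z$ is biholomorphic), so $\db_J u^r\equiv 0$ and these pieces do not contribute.

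On the central annulus $A_{r^{2-\alpha},r^\alpha}$ write $u^r=\uo(z)+\ui(r^2/z)$; subtracting the identities $\db_J\uo=0$ and $\db_J(\ui(r^2/z))=0$ from the definition of $\db_J u^r$ yields the exact pointwise expression
\[
\db_J u^r = \bigl[J(u^r)-J(\uo)\bigr]\,d\uo\circ j \;+\; \bigl[J(u^r)-J(\ui(r^2/z))\bigr]\,d(\ui(r^2/z))\circ j.
\]
Using the Lipschitz character of $J$ and the local expansions of lemma \ref{devloc}, $\abs{J(u^r)-J(\uo)}\le C\abs{\ui(r^2/z)}\le Cr^2/\abs{z}$ and $\abs{J(u^r)-J(\ui(r^2/z))}\le C\abs{\uo(z)}\le C\abs{z}$, together with $\abs{d\uo}\le C$ and $\abs{d(\ui(r^2/z))}\le Cr^2/\abs{z}^2$. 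Since $z\mapsto r^2/z$ is an isometry of $g^r$ interchanging the two halves of this annulus, it suffices to estimate the $L^p$-norm on the upper half $A_{r,r^\alpha}$; lemma \ref{lpz} then produces contributions of order $r^{1+2/p}$ (from the $r^2/\abs{z}$-type factors) and of order $r^{\alpha(1+2/p)}$ (from the $\abs{z}$-type factor paired with $\abs{d(\ui(r^2/z))}$), the latter dictating the constraint $\eps<\alpha(1+2/p)-1$.

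On the cutoff annulus $A_{r^\alpha,r^\gamma}$ and its symmetric mirror, where $u^r=\uo+\beta\,\ui(r^2/z)$, the analogous derivation picks up two additional terms coming from the non-holomorphicity of the cutoff,
\[
d\beta\otimes \ui(r^2/z) \;+\; J(u^r)\bigl(d\beta\otimes\ui(r^2/z)\bigr)\circ j,
\]
bounded pointwise by $2\abs{d\beta}\abs{\ui(r^2/z)}\le Cr^2/(\abs{z}^2\abs{\ln r^{\alpha-\gamma}})$. Lemma \ref{lpz} then yields an $L^p$-norm of order $r^{2-\alpha(2-2/p)}/\abs{\ln r}$ for this piece, enforcing the second constraint $\eps<1-\alpha(2-2/p)$; the remaining non-$d\beta$ terms on this annulus reproduce bounds already controlled on the central region. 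The hypotheses $2<p<4$ and $\alpha\in(p/(p+2),p/(2p-2))$ make both $\alpha(1+2/p)-1$ and $1-\alpha(2-2/p)$ strictly positive, so any $\eps$ below their minimum gives the desired bound with a constant $c_\thecind$ depending on the listed quantities. The main technical delicacy is the careful derivation of the exact pointwise identities above on the cutoff annulus: $\abs{z}$-size contributions from $(J(u^r)-J_0)\,du^r$ and $\db_{J_0}u^r$ have to be seen to cancel, leaving only the $r^2/\abs{z}$-pieces and the $d\beta$-term whose logarithmic denominator supplies the sharp $1/\abs{\ln r}$ gain.
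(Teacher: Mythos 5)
Your proof is correct, and the overall architecture is the paper's: the same five-region decomposition of \eqref{ur}, the vanishing of $\db_J\ur$ on the two extremal pieces, the symmetry of $g^r$ under $z\mapsto r^2/z$ to reduce to $\abs{z}>r$, the $\dd\beta$ term on $A_{r^\alpha,r^\gamma}$ estimated via lemma \ref{lpz} giving the constraint $\eps<1-\alpha(2-2/p)$ (hence $\alpha<\frac{p}{2p-2}$), and the constraint $\eps<\alpha(1+2/p)-1$ (hence $\alpha>\frac{p}{p+2}$, forcing $p<4$). Where you genuinely diverge is the central annulus: the paper expands $\uo$ and $\ui$ to second order and measures $\db_J\ur$ against the constant structure $J_0$ (as in lemma \ref{devloc}), which produces honest $O(\abs{z})$ terms ($a^h_{1,1}$, $a^h_{0,2}$ and $(J_\ur-J_0)J_0a^0_{1,0}$) whose $L^p$ norm $\sim r^{\alpha(1+2/p)}$ is what forces $\alpha>\frac{p}{p+2}$; you instead use the exact identity $\db_J\ur=(J_\ur-J_\uo)\,\dd\uo\circ j+(J_\ur-J_{\ui(r^2/z)})\,\dd(\ui(r^2/z))\circ j$, which needs only Lipschitz data on $J$ and first derivatives of the $u^h$. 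That is cleaner, and in fact sharper: with your own pointwise bounds both terms are $O(r^2/\abs{z})$, so the central annulus contributes only $O(r^{1+2/p})$, which beats $r^{1+\eps}$ outright; this also makes your attribution of the $r^{\alpha(1+2/p)}$ rate to "the $\abs{z}$-factor paired with $\abs{\dd(\ui(r^2/z))}$" internally inconsistent (that pairing is $O(r^2/\abs{z})$, i.e. $r^{1+2/p}$; you only get $r^{\alpha(1+2/p)}$ if you discard the factor $r^2/\abs{z}^2\le1$). This is harmless — it errs on the safe side and the lemma's $\eps$-range already absorbs it — but it means the lower constraint $\alpha>\frac{p}{p+2}$ is, in your version, not actually needed on this region, whereas in the paper it is genuinely used because of the second-order terms. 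Your route buys independence from second derivatives of the $u^h$ on the central annulus; the paper's buys the explicit leading-order description $a^0z+a^1r^2/z$ that theorem \ref{add} is after, which is why it carries the second-order expansion there.
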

\begin{proof}
Since the situation is symmetric, we will only be concerned with the part where $r<\abs{z}$. We split this region again, as the definition $u^r$ varies.
\[
\begin{array}{lll} 
  \pnr{\db_J u^r}_{L^p(\{z | \abs{z}>r \})}^p 
        &= \pnr{\db_J u^r }_{L^p(A_{r,r^\alpha})}^p 
        &+ \pnr{\db_J u^r }_{L^p(A_{r^\alpha , r^\gamma})}^p. \\
\end{array}
\]
The region where $\abs{z}>r^\gamma$ does not contribute in the equality above since $u^r=u^0$ is $J$-holomorphic. For the other domains, $J_w$ will be seen as a matrix valued map using a local chart. Again, the notation $(\db_{J_g} f)(z) = \dd f (z) + J_{g(z)} \circ \dd f (z) \circ j $ will be used to emphasize the point at which $J$ is evaluated. With this understood,
\[
\begin{array}{rl}
\nr{\db_J u^r }_{L^p(A_{r^\alpha , r^\gamma})} 
    &= \nr{\db_J u^r - \db_J u^0}_{L^p(A_{r^\alpha , r^\gamma})} \\
    &= \nr{\db_{J_\ur} ( u^r - u^0) +(\db_{J_\ur} - \db_{J_\uo}) u^0}_{L^p(A_{r^\alpha , r^\gamma})} \\
    &= \nr{\db_{J_\ur} \big(\beta(z) \ui(\frac{r^2}{z}) \big) + (J_\ur-J_\uo)\dd u^0 \circ j }_{L^p(A_{r^\alpha , r^\gamma})} \\
    &\leq 
\nr{ \dd \big( \beta(z) \ui(\frac{r^2}{z}) \big) }_{L^p(A_{r^\alpha , r^\gamma})}
+ \nr{J_.}_{C^1} \nr{\dd \uo}_{C^0} \nr{\ui(\frac{r^2}{z})}_{L^p(A_{r^\alpha , r^\gamma})}. \\ 
\end{array}
\]
The bounds obtained in lemma \ref{lpdu} and the norms computed in lemma \ref{lpz} yield the following upper bound:
\[
\pnr{\db_J u^r }_{L^p(A_{r^\alpha , r^\gamma})} \leq C K'_{\alpha,\gamma,p,2} (1+\abs{\ln r^{\alpha-\gamma}}) r^{2-\alpha(2-2/p)}+\nr{J_\cdot}_{C^1} C^2 K'_{\alpha,\gamma,p,1} r^{2-\alpha(1-2/p)}
\]
\indent In order to factorize $r^{1+\eps}$ when $l=1$ or $2$, one must have that $2 - \alpha (l-2/p) >1 \ssi \alpha < \frac{p}{lp-2}$. This condition is only restrictive for $l=2$.
\par To evaluate the other part, we proceed as in lemma \ref{devloc}. Upon noticing that the local expansion of $\db_J u$ implies $\abs{\db_J u} \leq \abs{\frac{\del u}{\del s} + J(u) \frac{\del u}{\del t}}$, and that the $u^h$ can be written as
\[
u^h(z) = a^hz + \sum_{k,l} z^k \zb^l a^h_{k,l} +O(\abs{z}^3) \qquad \textrm{ where }k,l \in \{0,1,2\}^2, k+l \geq 2
\]
when $r<\abs{z}<r^\alpha$, the following bound (it can also be seen using \eqref{ast1}) appears 
\[
\begin{array}{rl}
\big| \db_J u^r \big|
     &\leq \bigg| a^0_{1,1} z + 2 a^0_{0,2}\zb - a^\infty_{1,1} \frac{r^4}{z \zb^2} - 2a^\infty_{0,2} \frac{r^4}{\zb^3} +O(\abs{z}^2)+O( \frac{r^6}{\abs{z}^4}) \\
     &\qquad \qquad +(J_\ur-J_0) J_0 (a^0_{1,0} + a^\infty_{1,0} \frac{r^2}{z^2} +O(\abs{z}) +O(\frac{r^4}{\abs{z}^3} ) ) \bigg|.
\end{array}
\]
Thus, the factors $\abs{z}$, $ \frac{r^2}{\abs{z}}$ and $ \frac{r^4}{\abs{z}^3}$ could endanger our goal, as an expansion of $(J_\ur-J_0)$ shows. It also means that our bounds depend on the second derivatives of the $u^h$, or on a product of the first derivatives of $J$ and $u^h$. The $L^p$ norm will be made of terms in
\[
  \begin{array}{ll}
    \nr{z^l}_{L^p(A_{r,r^\alpha})}     
              &\sim K_{1,\alpha,p,l} r^{\alpha(l+2/p)} \\
    \nr{\frac{r^{l'}}{z^l}}_{L^p(A_{r,r^\alpha})}
              &\sim K'_{1,\alpha,p,l} r^{l'-l+2/p} \\
  \end{array}
\]
with $l'>l \geq 1$, as computed in lemma \ref{lpz}. This raises a new condition on $\alpha$: $\alpha(l+2/p) >1 \ssi \alpha> \frac{p}{lp+2}$. Consequently $\pnr{\db_J u^r}_{L^p} \leq K r^{1+\eps}$ under the condition that $\alpha \in ]\frac{p}{p+2}, \frac{p}{2p-2}[$, which is only possible if $p<4$. 
\end{proof}
\begin{remark} \label{chalpha}
  For any $p \in ]2,4[$, there is an optimal choice of $\alpha$. Indeed, if $\alpha = \frac{2}{3}$ then $\eps < \frac{1}{3}(\frac{4}{p}-1)$. Taking $p$ close to $2$, enables $\eps$ to be close to $1/3$. Theorem \ref{add} is obtained with this choice of $\alpha$. However, it is not possible to take $p \to 2$ as some constants, \eg $s_p$, depend on $p$. The choice of $\gamma$ is quite secondary, \eg one could choose $\gamma =\frac{5}{6}$.
\end{remark}

\subsection{Construction of the inverse $Q_{\ur}$} \label{sinv}
  In this section, we will make the somehow strong assumption that $J$ is constant in a neighborhood of $m_0 \in M$; the reason why such a simplification is possible is explained in {\S}\ref{jpc}. The whole gluing process is presented in its proper order in section \ref{sread}.
\par Before we apply the implicit function theorem, it is required to have a bounded inverse to the linearization of $\db_J$ at $u^r$, $D_\ur$. The existence of inverses for $D_{u^h}$ combined with the observation that two maps which are close enough will have close linearization, will enable the construction of this inverse. First let us show that if $u'$ is close to $u$ in the sense of $\wup$, then the operators $D_u$ and $D_{u'}$ are close. In order to identify their images, parallel transport is necessary. However, it does not affect significantly the following computation:
\begin{equation}\label{appu}
    \begin{array}{rl} 
\nr{D_u\xi - D_{u'} \xi}_{L^p} 
    & \leq \nr{(J_u-J_{u'})\nabla \xi}_{L^p} 
          + \frac{1}{2} \nr{J_u \nabla_\xi J_u (\dd u -\dd u') }_{L^p} 
  \\ & \qquad + \frac{1}{2} \nr{(J_u \nabla_\xi J_u - J_{u'} \nabla_\xi J_{u'}) \dd u' }_{L^p} \\
    & \leq \nr{J_.}_{C^1} \nr{u-u'}_{C^0} \nr{\nabla \xi}_{L^p} 
          + \frac{1}{2} \nr{J_u \nabla_\xi J_u}_{C^0} \nr{ \dd u -\dd u'}_{L^p} 
  \\ & \qquad + \frac{1}{2} \nr{J_. \nabla_\xi J_. }_{C^1} \nr{u-u'}_{C^0} \nr{\dd u' }_{L^p}\\
    &   \leq s_p \nr{J_.}_{C^1} \nwup{u-u'} \nwup{\xi} 
         + \frac{1}{2} \nr{J_u \nabla_\xi J_u}_{C^0} \nwup{ u - u'} 
  \\ & \qquad + \frac{1}{2} s_p \nr{J_. \nabla_\xi J_. }_{C^1} \nwup{u-u'} \nr{\dd u' }_{L^p} \\
    &   \leq s_p \nr{J_.}_{C^1} \nwup{u-u'} \nwup{\xi} 
  \\ & \qquad + \frac{1}{2} s_p \nr{J_u \nabla J_u}_{C^0} \nwup{\xi} \nwup{ u - u'} 
  \\ & \qquad \qquad + \frac{1}{2} s_p^2 \nr{J_. \nabla J_. }_{C^1} \nwup{\xi} \nwup{u-u'} \nr{\dd u' }_{L^p} \\
     &  \leq c_\indc (\nabla^2 J, \dd u', s_p )  \nwup{\xi} \nwup{u-u'} .
    \end{array}
\end{equation}  

\indent For the curves we are concerned with, proximity in $ \nwup{\cdot} $ will be insured as follows: $\dd (u^r- u^0)$ is zero when $\abs{z}> r^\gamma$, and it is of the order of $\frac{r^2}{z^2}$ when $r< \abs{z}< r^\gamma$, consequently $\nr{u^r-u^0}_{\wup(\{\abs{z}> r\})}$ is of the order of $r^{2/p}$. Thus, $D_\ur$ will be close to one of the $D_{u^h}$ inside or outside $\abs{z} =r$. 
\par To be more precise, it is necessary to introduce intermediate curves, denoted by $\uor$ and $\uir$. The first will be defined as follows
\[
  \uor (z) = \left\{ 
    \begin{array}{llrcl}
      \uo(z)                                          &\textrm{if}&               &\abs{z}&<  r^{2-\gamma} \\
      \uo(z)+ \beta(\frac{r^2}{z}) \ui(\frac{r^2}{z}) &\textrm{if}& r^{2-\gamma} <&\abs{z}&< r^{2-\alpha} \\
      \uo(z)+\ui(\frac{r^2}{z})                       &\textrm{if}& r^{2-\alpha} <&\abs{z}&< r^\alpha \\
      \uo(z)+\beta(\abs{z}) \ui(\frac{r^2}{z})        &\textrm{if}& r^\alpha     <&\abs{z}&< r^\gamma \\
      \uo(z)                                          &\textrm{if}& r^\gamma     <&\abs{z}
    \end{array} \right. ,
\]
and the second in an analogous manner. Since $\nwup{\uor-\uo} \to 0$ as $r \to 0$, the operator $D_\uor$ will be as close as required to $D_\uo$ and identical to $D_\ur$ when $\abs{z} > r$.
\par The two inverses $Q_{\uo}$ and $Q_{\ui}$ will be used to construct an inverse to $D_\ur$ whose bound is independent of $r$. First we introduce some notations. For $u:\SR \to M$, let $\wup_u = \wup(\SR, u^*\tg M)$, $L^p_{u} = L^p(\SR, \Lambda^{0,1}  \tg^*\SR \otimes_J u^*\tg M)$. Given $\uo,\ui: \SR \to M$, such that $\uo(0) = \ui(0)$, denote by
\[
\wup_{\uoi} := \left\{ (\xi^0,\xi^1) \in \wup_{\uo} \times \wup_{\ui} | \xi^0(0)=\xi^1(0)  \right\} .
\]
The assumption that $p>2$ is of importance, since $\wup$ sections need not be continuous if $p \leq 2$, and their evaluation at a point would not make sense.
\par Thanks to the regularity assumption made on $J$, the operator
  \[
  \begin{array}{rrll}
  D_{0,1} : & \wup_{\uoi}   & \to     & L^p_{u^0} \times L^p_{\ui} \\
            & (\xi^0,\xi^1) & \mapsto & (D_\uo \xi^0, D_{\ui} \xi^1)
  \end{array}
  \]
  is surjective (\cf \cite[{\S}10.5]{mds1}). Thus, $D_{0,1}$ possesses an inverse which depends continuously on the pair $(\uo, \ui)$ and satisfies an uniform bound as $(\uo, \ui)$ varies in $\jo{M}^*(C)$. This suffices for our use, but if one would like to stay in a case where ``surjectivity'' of the gluing map (\cf \cite[Theorem 10.1.2.iii]{mds1}) is possible, one needs to show that amongst all the inverses of $D_{0,1}$, the one which is orthogonal to the kernel also has bounded norm. Recall that surjectivity is the property that any $J$-holomorphic curve which is close to union of the curves $\uo$ and $\ui$ is in the image of the gluing map.
\par More precisely, if $\jo{W}_{\uoi} \subset \wup_{\uoi}$ is the ($L^2$) orthogonal to the kernel of $D_{0,1}$, then the restriction of this operator to $\jo{W}_{\uoi}$ is bijective and bounded. Its inverse will be denoted $Q_{0,\infty}$. It varies continuously with the pair $(\uo,\ui)$ and the bound is uniform as $\jo{M}^*(C)$ is compact. To make this explicit, an identification must be made between $\wup_{\uoi} $ and $\wup_{v^{0,1}}$ for pairs $(\uo, \ui)$ and $(v^0, v^1)$ sufficiently close (we will not do it here). 
\par Since the maps $\uor$ and $\uir$ are small $\wup$ deformations of $\uo$ and $\ui$, the space $\wup_{0,1}$ may be seen as a limit when $r \to 0$ of spaces $\wup_{0,1,r}$ corresponding to these slightly altered maps. The operator $D_{0,1,r}$ being a small perturbation of $D_{0,1}$ it will possess a right inverse. To prove surjectivity of the gluing map (as in \cite[Theorem 10.1.2.iii or Corollary 10.1.3]{mds1}, it is the inverse $Q_{0,1,r}$ whose image is $L^2$-orthogonal to the kernel of $D_{0,1,r} $ which must be chosen. A verification must be made to show that the bound on the norm of this operator is independent of $r$. This argument (\cite[Lemma 10.6.1]{mds1}) works without need of change in the situation in which we are (the kernel of $D_u$ is finite-dimensional).
\par Thanks to the operator $D_\uoir$, an approximate inverse $T_\ur : L^p_\ur \to \wup_\ur$ for $D_\ur$ will be obtained. Let $\eta \in L^p_\ur$. This 1-form will be cut along the circle $\abs{z}=r$ in two pieces $(\eta^0, \eta^1)$:
  \[
    \begin{array}{cc}
    \eta^0(z) = \left\{
      \begin{array}{ll}
       \eta(z) & \textrm{if } \abs{z} > r \\
       0       & \textrm{if } \abs{z} < r \\
      \end{array} \right. , 
       &  \eta^1(z) = \left\{
      \begin{array}{ll}
       r^2 \eta(r^2 z) & \textrm{if } \abs{z} < 1/r \\
       0       & \textrm{if } \abs{z} > 1/r \\        
      \end{array} \right. .
    \end{array}
  \]
\indent Since the $\eta^h$ are only in $L^p$, the discontinuity is not problematic. Now let $(\xi^0,\xi^1) = Q_{0,1,r}(\eta^0, \eta^1)$. It is worth stressing that $\xi^0(0)=\xi^1(0) =: \xi_{m_0} \in \tg_{m_0} M$. Let $\delta \in ]0,1[$. This choice is not of importance; it would suffice to take $\delta = \frac{1}{2}$. Let 
\[
\beta(z)= 1- \beta_{r^{1+\delta}, r}(z) = \left\{
\begin{array}{llrcl}
    1                                                      & \textrm{if}&          r  <& \abs{z} \\
    \fr{\ln \abs{z} - \ln(r^{1+\delta})}{ -\ln(r^\delta) } & \textrm{if}& r^{1+\delta} <& \abs{z} &< r \\
    0                                                      & \textrm{if}&              & \abs{z} &< r^{1+\delta}
\end{array} \right. .
\]
The approximate inverse is: $T_\ur \eta = \xi^r$, where
\begin{equation}  \label{xir}
 \xi^r(z) = \left\{
 \begin{array}{llrlc}
   \xi^0(z)                                                        &\textrm{if}& r^{1-\delta} <&\abs{z} \\
   \xi^0(z) +\beta(\frac{r^2}{z} )(\xi^1(\frac{z}{r^2}) -\xi_{m_0})&\textrm{if}&            r <&\abs{z}&< r^{1-\delta} \\
   \xi^0(z) + \xi^1(\frac{z}{r^2}) -\xi_{m_0}                      &\textrm{if}&            r =&\abs{z} \\
   \xi^1(\frac{z}{r^2}) +\beta(z) (\xi^0(z) -\xi_{m_0})            &\textrm{if}& r^{1+\delta} <&\abs{z}&< r\\
   \xi^1(\frac{z}{r^2})                                            &\textrm{if}&               &\abs{z}& <r^{1+\delta}\\
  \end{array} \right. 
\end{equation}
It remains to show that this is an approximate inverse as claimed, \ie $\nr{D_\ur \xi^r - \eta}_{L^p} \leq \eps \nr{\eta}_{L^p}$ for some $\eps \in [0,1[$. By construction the left-hand term is zero outside $ r^{1+\delta} < \abs{z} < r^{1-\delta}$. The assumption that the almost-complex structure is constant on that region will now be important. We restrict our attention, thanks to symmetry, to the piece $\abs{z} <r$. By definition $D_\ur \xi^1 ( \cdot/r^2) = \eta(\cdot)$. Hence, 
\begin{equation} \label{etar}
  \begin{array}{rl}
    D_\ur \xi^r - \eta &= D_\uor(\beta (\xi^0 - \xi_{m_0})) \\
                       &= \beta  D_\uor(\xi^0 - \xi_{m_0}) + (\xi^0 - \xi_{m_0}) \db \beta \\
                       &= (\xi^0 - \xi_{m_0}) \db \beta , 
  \end{array}
\end{equation}
 since $D_\uor \xi^0 =0$ when $\abs{z} < r$. It remains to bound this norm with respect to the metric (that depends on $r$). There will be a factor of $\theta^r(z)^{p-2}$ in the norm of the  1-forms, but \mbox{$\theta^r <\theta^1 \leq 2$}).
\begin{equation} \label{brnpinv}
   \begin{array}{rl}
    \nr{D_\ur \xi^r - \eta }_{L^p(B_r)} 
          & \leq 2^{1-2/p} \nr{D_\ur \xi^r - \eta }_{L^p(\abs{z} < r)} \\
          & \leq 2^{1-2/p} \nr{(\xi^0 - \xi_{m_0}) \db \beta }_{L^p(\abs{z} < r)} \\
          & \leq 2 \pi^{1/p} s_H \frac{\nwup{\xi^0 - \xi_{m_0}}}{\abs{\delta \ln r }^{1-1/p}} \\
          & \leq 2 \pi^{1/p} s_H s_p c_4 \frac{(\nr{\eta^0}_{L^p}+\nr{\eta^\infty}_{L^p})}{\abs{\delta \ln r }^{1-1/p}} \\
          & \leq 4 \pi^{1/p} s_H s_p c_4 \frac{\nr{\eta}_{L^p}}{\abs{\delta \ln r }^{1-1/p}} .
   \end{array}
\end{equation}
Lemma \ref{dbetaxi} is used to go from the 2$^{\textrm{nd}}$ to the 3$^{\textrm{rd}}$ line. The 4$^{\textrm{th}}$ line is obtained from the third using that $\xi_{m_0}$ is bounded by the $C^0$ norm (and thus by the $\wup$ norm) of $\xi^0$, and on the other hand that the $\wup$ norm of $\xi^0$ is bounded by a constant (coming from the norm of $Q_{0,1,r}$) multiplied by the $L^p$ norm of $\eta^0$.
\begin{lemma}\label{pinv}
Let $u^r$ be as defined in \eqref{ur} then $\forall \eps \in [0,1[, \exists c_\indc ,\exists r_\indr$ (which depend on $c_4$, $s_p$, and $s_H$), such that $\forall r \leq r_\therind, \exists T_\ur$ such that $\nr{D_\ur T_\ur -\un} \leq \frac{1}{2}$ and $\nr{T_\ur} \leq c_\thecind$.
\end{lemma}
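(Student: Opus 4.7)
The construction of $T_{\ur}$ has already been carried out in \eqref{xir}, and the key inequality \eqref{brnpinv} is already in hand, so the plan is simply to assemble the pieces. First I would observe that the estimate \eqref{brnpinv} only handled the region $\abs{z} < r$; because the gluing construction is symmetric under the involution $z \mapsto r^2/z$ (with the corresponding metric swap discussed in \S{}\ref{smet}), the identical bound applies on $r < \abs{z} < r^{1-\delta}$ with $\xi^1$ playing the role of $\xi^0$ and $\eta^1$ the role of $\eta^0$. Adding both contributions, and using $\nr{\eta^0}_{L^p} + \nr{\eta^1}_{L^p} \leq 2\nr{\eta}_{L^p}$, one obtains
\[
\nr{D_{\ur} T_{\ur} \eta - \eta}_{L^p} \leq \frac{C_0 \nr{\eta}_{L^p}}{\abs{\delta \ln r}^{1-1/p}},
\]
where $C_0$ depends only on $c_4 = \nr{Q_{0,1,r}}$, $s_p$ and $s_H$. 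Choosing $r_\indr$ small enough that the right-hand side is at most $\tfrac{1}{2}\nr{\eta}_{L^p}$ yields the first claim $\nr{D_{\ur} T_{\ur} - \un} \leq \tfrac{1}{2}$.

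For the bound on $\nr{T_{\ur}}$ itself, I would estimate $\nwup{\xi^r}$ piecewise from \eqref{xir}. Outside the gluing annuli the formula reduces to $\xi^0$ or $\xi^1(\cdot/r^2)$, so those contributions are controlled by $\nwup{\xi^0} + \nwup{\xi^1} \leq c_4(\nr{\eta^0}_{L^p} + \nr{\eta^1}_{L^p}) \leq 2 c_4 \nr{\eta}_{L^p}$. Inside the annuli $r^{1+\delta} < \abs{z} < r^{1-\delta}$, the extra terms are $\beta \cdot (\xi^0 - \xi_{m_0})$ and $\beta \cdot (\xi^1(\cdot/r^2) - \xi_{m_0})$. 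Their $L^p$ norm is bounded by $\nwup{\xi^0} + \nwup{\xi^1}$ (using $\abs{\xi_{m_0}} \leq s_p(\nwup{\xi^0} + \nwup{\xi^1})$ and that $\beta$ is bounded by $1$), and the $L^p$ norm of the derivative splits as before: a term where $\nabla$ hits $\xi^h$, controlled by $\nwup{\xi^h}$, plus a term where $\nabla$ hits $\beta$, which is handled precisely by Lemma \ref{dbetaxi} applied to $\xi^h - \xi_{m_0}$ (this is why the subtraction of $\xi_{m_0}$ was built into \eqref{xir}).

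Assembling these pieces gives
\[
\nwup{\xi^r} \leq c_\indc \,\nr{\eta}_{L^p},
\]
with $c_{\thecind}$ depending on $c_4$, $s_p$ and $s_H$ but independent of $r$ once $r \leq r_{\therind}$. This is the desired bound $\nr{T_{\ur}} \leq c_\thecind$.

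The main technical obstacle, already surmounted in the derivation of \eqref{etar} and \eqref{brnpinv}, is that $D_{\ur} \xi^r - \eta$ does not reduce cleanly to $(\xi^h - \xi_{m_0}) \db \beta$ unless the almost-complex structure is constant near $m_0$; otherwise extra terms coming from $(\nabla J) \cdot \xi^h$ pollute the estimate and cannot be absorbed by a logarithmic factor. This is precisely why the simplifying hypothesis on $J$ being constant near $m_0$ is in force in this subsection, and its justification is deferred to \S\ref{jpc}. With that hypothesis, the argument sketched above goes through without further complication.
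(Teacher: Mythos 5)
Your proposal is correct and follows essentially the same route as the paper: the approximate-inverse property comes from the computation culminating in \eqref{brnpinv} (together with the symmetry in $z\mapsto r^2/z$) by taking $r$ small enough that the logarithmic factor forces the error below $\tfrac12$, and the bound on $\nr{T_\ur}$ comes from estimating $\nwup{\xi^r}$ via the uniform bound $c_4$ on $Q_{0,1,r}$ together with lemma \ref{dbetaxi} for the term where the derivative hits the cutoff. Your closing remark on why the constancy of $J$ near $m_0$ is needed is exactly the point the paper defers to \S\ref{jpc}.
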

\begin{proof}
The only part of the statement which was not proved in the above discussion is the one concerning the norm of $T_\ur$. It requires a bound on $\nwup{\xi^r}$ as a function of $\nr{\eta}_{L^p}$. Only the cutoff function requires care, the bound being otherwise found thanks to the bound on $Q_{0,1,r}$. However, $\nr{\nabla \xi^r}$ remains controlled exactly as in \eqref{brnpinv} thanks to lemma \ref{dbetaxi}.
\end{proof}
Thus the true inverse $Q_\ur$ will have the same image as $T_\ur$ and will be defined by:
\begin{equation}  \label{qur}
Q_\ur = T_\ur (D_\ur T_\ur)^{-1} = T_\ur \somme{k=0}{\infty} (\un - D_\ur T_\ur)^k
\end{equation}
It satisfies the relation: $D_\ur Q_\ur = \un$ and $\nr{Q_\ur} \leq 2 c_\thecind$ , where $c_\thecind$ comes from lemma \ref{pinv}.

\subsection{On the assumption that $J$ is constant near $m_0$} \label{jpc}
This section consists in noting that when $J$ is close to $J'$, the operator $D_u^J$ is close to $D_u^{J'}$ for certain $u$ (\eg $\uo, \ui$ and $\ur$). In order to speak of a difference between these two operators, we will see their images not as the space of $(0,1)$-forms taking value in $\tg M$ (since the definition of a $(0,1)$-form depends on the almost-complex structure) but as the space of $\tg M$-valued $1$-forms.
\[
\begin{array}{rl}
\pnr{D_u^J \xi - D_u^{J'} \xi}_{L^p} 
	&\leq \frac{1}{2} \nr{(J_u - J'_u) \nabla \xi}_{L^p} + \frac{1}{2} \nr{(J_u \nabla_\xi J_u - J'_u \nabla_\xi J'_u) \dd u }_{L^p} \\
	&\leq \frac{1}{2} \nr{\nabla \xi}_{L_p} \nr{J_u - J'_u}_{C^0} + \frac{1}{2} \nr{\dd u}_{C^0} \nr{\xi}_{C^0} \nr{J_u \nabla J_u - J'_u \nabla J'_u}_{L_p} \\
	&\leq c_\indc(\dd u) \nwup{\xi} (\nr{J_u - J'_u}_{C^0} + \nr{J_u \nabla J_u - J'_u \nabla J'_u}_{L_p}).
\end{array}
\]
\indent Thus, it is important to note that the dependence on the differential of $u$ will not be a problem for the maps we consider.
\begin{lemma} \label{c0du}
$\exists r_\indr (\alpha, \gamma)$ such that $\forall r < r_6$, $\nr{\dd u^r}_{C^0} \leq 2C$. 
\end{lemma}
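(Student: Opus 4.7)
The estimate is a pointwise check over the five regions of \eqref{ur}. The conformal factor $\theta^r$ was designed so that the inversion $z \mapsto r^2/z$ is an isometry exchanging $\{\abs{z} < r\}$ and $\{\abs{z} > r\}$ (with $g^r$ coinciding with the ambient Fubini--Study metric on $\abs{z} > r$). Since the definition of $u^r$ is itself symmetric under this inversion combined with the swap $u^0 \leftrightarrow u^1$, it suffices to estimate on $\abs{z} \geq r$. On the outer pure region $\abs{z} > r^\gamma$, $u^r = u^0$ and $g^r$ is the Fubini--Study metric, so $\abs{\dd u^r(z)}_{g^r} \leq C$ is immediate from the hypothesis.

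The crux is the overlap region $r \leq \abs{z} \leq r^\alpha$. In a local chart about $m_0$, $u^r(z) = u^0(z) + u^1(r^2/z)$ gives $\dd u^r = \dd u^0 - (r^2/z^2)\,\dd u^1(r^2/z)$. The relation $\abs{\omega}_{g^r} = \theta^r\,\abs{\omega}_{\mathrm{eucl}}$ between pointwise 1-form norms, combined with the hypotheses recast in the chart as $(1+\abs{z}^2)\,\abs{\dd u^0(z)}_{\mathrm{eucl}} \leq C$ and $(1+r^4/\abs{z}^2)\,\abs{\dd u^1(r^2/z)}_{\mathrm{eucl}} \leq C$, delivers
\[
\abs{\dd u^r(z)}_{g^r} \leq C + C\,\frac{(1+\abs{z}^2)\,r^2}{\abs{z}^2 + r^4}.
\]
The substitution $t = \abs{z}^2/r^2 \geq 1$ rewrites the multiplier on the right as $(1 + r^2 t)/(t + r^2)$; its $t$-derivative $(r^4 - 1)/(t+r^2)^2$ is negative for $r < 1$, so the supremum on $t \geq 1$ equals its value $1$ at $t=1$. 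Hence $\abs{\dd u^r}_{g^r} \leq 2C$ throughout the overlap, the bound being sharp at $\abs{z} = r$.

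On the transition region $r^\alpha < \abs{z} < r^\gamma$, $u^r = u^0 + \beta(\abs{z})\,u^1(r^2/z)$ yields three summands in $\dd u^r$. The leading $\dd u^0$ contributes at most $C$. The term $\beta\,\dd u^1(r^2/z)\,\dd(r^2/z)$ inherits the overlap estimate (with $\beta \leq 1$), and the constraint $r^2/\abs{z}^2 \leq r^{2-2\alpha}$ forces it to be $O(r^{2-2\alpha})$. The cutoff term $(\dd\beta)\,u^1(r^2/z)$ is controlled using the explicit bound $\abs{\dd\beta}_{\mathrm{eucl}} = (\abs{z}\,(\alpha-\gamma)\,\abs{\ln r})^{-1}$ read off from Definition~\ref{betadef} together with the Lipschitz estimate $\abs{u^1(r^2/z)} \leq C\,r^2/\abs{z}$ (from $u^1(0)=0$ and $\abs{\dd u^1}_{\mathrm{eucl}} \leq C$ in the chart), yielding an $O(r^{2-2\alpha}/\abs{\ln r})$ contribution. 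Both errors vanish as $r \to 0$, so $r_6(\alpha,\gamma)$ can be chosen small enough that each is $\leq C/2$, producing $\abs{\dd u^r}_{g^r} \leq 2C$ there as well. The region $\abs{z} \leq r$ follows by the isometric symmetry. The main technical point is the overlap, where two sources of comparable size must combine tightly; the careful scaling of $g^r$ is precisely what caps the total at $2C$ rather than something larger.
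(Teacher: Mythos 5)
Your proof is correct and follows essentially the same route as the paper's: a region-by-region pointwise estimate, with $\dd u^r = \dd u^0 - (r^2/z^2)\,\dd u^1(r^2/z)$ giving $C+C$ on the middle annulus, cutoff and chain-rule terms of size $O\bigl(r^{2-2\alpha}(1+\abs{\ln r^{\alpha-\gamma}}^{-1})\bigr)$ on $A_{r^\alpha,r^\gamma}$ absorbed for $r$ small, and symmetry handling $\abs{z}<r$. The only difference is cosmetic: you track the conformal factor $\theta^r$ explicitly (the optimization in $t=\abs{z}^2/r^2$), whereas the paper simply uses $\abs{r^2/z^2}\leq 1$ on $\abs{z}\geq r$; both arrive at the same bound $2C$.
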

\begin{proof}
This proof works in an analogous fashion as the bound of the $L^p$ norm of $\dd u^r$. When $r<\abs{z}<r^\alpha$ this is a simple thing to check: 
\[
\nr{\dd u^r}_{C^0} \leq \nr{\dd u^0}_{C^0} + \pnr{\tfrac{r^2}{z^2}}_{C^0} \nr{\dd \ui}_{C^0}
                   \leq 2C .
\]
If $r^\gamma < \abs{z}$, then $\dd u^r = \dd u^0$ so the conclusion is direct. Finally on $A_{r^\alpha,r^\gamma}$, the computation requires a local chart and a local expansion for $\ui$: if $r$ is small, then $\frac{r^2}{z}$ is also small on $A_{r^\alpha,r^\gamma}$: $r^{2-\alpha} >\abs{\frac{r^2}{z}}>r^{2-\gamma}$. Thus, $\nr{\dd u^r}_{C^0} \leq \nr{\dd u^0}_{C^0} + \nr{\dd (\beta(\abs{z}) \ui(\frac{r^2}{z}))}_{C^0}$, and since $\abs{\ui(w)} \leq C \abs{w} + O(\abs{w}^2)$, the second term can be written as 
\[ 
  \begin{array}{rl}
    \nr{\dd (\beta(z) \ui(\frac{r^2}{z}))}_{C^0} 
     &\leq \nr{\frac{\ui(\frac{r^2}{z})}{\abs{z} \ln{r^{\alpha-\gamma}}}}_{C^0} + \nr{ \dd (\ui(\frac{r^2}{z})) }_{C^0} \\
     & \leq \nr{C r^2/z^2 + O(r^4/z^3) }_{C^0} \abs{\ln{r^{\alpha-\gamma}}}^{-1}   +  \nr{\dd \ui}_{C^0} \nr{\frac{r^2}{z^2}}_{C^0} \\ 
     & \leq C \abs{r^{2-2\alpha}} (\abs{\ln{r^{\alpha-\gamma}}}^{-1} +1 )
  \end{array}
\] 
\indent The $C^0$ norm is bounded by the maximum of the bounds on each region: $\nr{\dd u^r}_{C^0} \leq \max(2C,C,C+o(1)) \leq 2C$ for all $r$ such that $\abs{r^{2-2\alpha}} (\abs{\ln{r^{\alpha-\gamma}}}^{-1} +1 ) <1$.
\end{proof}
\indent This lemma, together with lemma \ref{mindu}, allows us to choose $r$ arbitrarily small without changing the proximity of $D^J_\ur$ and $D^{J'}_\ur$. It is important to show that this proximity is valid for the whole family of curves considered. The property required of $J'$ is to be constant in a neighborhood of $m_0$. Consequently let us define for $R \in ]0,1[$ and for $\kappa \in \rr>0$,
\begin{equation}   \label{JR}
  J'(w) = \left\{ 
    \begin{array}{llrcl}
      J_0                  & \textrm{if} &                  & \abs{w} &< R(1-R^\kappa) \\
      J_{\beta(\abs{w})w}  & \textrm{if} &  R(1-R^\kappa) < & \abs{w} &< R \\
      J_w                  & \textrm{if} &       R <        & \abs{w} & \\
    \end{array} \right.
\end{equation}
where
\[
\beta(x)= \left\{
\begin{array}{llrcl}
  0                                                & \textrm{if}&               &   x < & R(1-R^\kappa) \\
  \fr{\ln x -\ln R(1-R^\kappa)}{-\ln (1-R^\kappa)} & \textrm{if}& R(1-R^\kappa) & < x < & R \\
  1                                                & \textrm{if}& R             & < x
\end{array} \right.
\]
Then $\nr{J-J'}_{C^0} \leq 2\nr{J-J_0}_{C^0(\{ \abs{w}<R \})} \leq O(R)$. Furthermore, since 
\[
\begin{array}{rl}
\abs{ \nabla (J_{\beta(\abs{w})w}) } 
      &\leq \abs{ (\nabla J)_{\beta(\abs{w})w} \bigg( \beta(\abs{w})- \fr{w}{\abs{w} \ln (1-R^\kappa)} \bigg) }  \\
      &\leq \abs{ (\nabla J)_{\beta(\abs{w})w}} \big(1 + \abs{\ln (1-R^\kappa)}^{-1}  \big),
\end{array}
\]
it is possible to obtain a rough bound for $\nr{J\nabla J - J' \nabla J'}_{L^p}$, if we suppose that $\pnr{\dd u} \geq d $ in $B_\rho(0)$, so that the preimage by $u$ of a small ball remains a small ball up to multiplication by a bounded factor. Let $R'$ be such that $B_{R'}(m_0) \cap \img u^h \subset u^h(B_\rho(0))$. In order to avoid cases where the map sends many subsets of $\cp^1$ to $B_{R'}(m_0)$ (for instance, is non injective), it is possible to introduce almost complex structures that depend on a point of the domain; we shall not go into such details. Thus, the preimage of $\abs{w} <R(1-R^\kappa)$ by $u$ is dilated by at most $k \propto \frac{1}{d}$, whence 
\[
\nr{J \nabla J}_{L^p(u^{-1}(\abs{w} <R(1-R^\kappa)) )} \leq \nr{J\nabla J}_{L^\infty} (k R^2(1-R^\kappa)^2)^{1/p},
\]
and similarly for the ring $R(1-R^\kappa) < \abs{w} < R$, the bound is 
\[
\nr{J \nabla J- J' \nabla J'}_{L^p(u^{-1}( R(1-R^\kappa) < \abs{w} < R ))} 
      \leq \nr{J\nabla J}_{L^\infty} \bigg(2+\frac{1}{\ln (1-R^\kappa)} \bigg) (k R^{2+\kappa} (2-R^\kappa))^{1/p}.
\]
 If $R^{2+\kappa(1-p)} \to 0$ (\eg if $\kappa = \frac{1}{p-1}$ and $R \to 0$), the operators associated to $J$ and $J'$ will be as close as needed.
\par There remains to check that the assumption on the lower bound on the differential holds for $\uo, \ui$ and $\ur$. For $\uo$ and $\ui$ it follows from the fact that $a^0$ and $a^1$ are not trivial. As for $\ur$, it is a consequence of their linear independence over $\cc$: let $\mu = \min_z (\abs{a^0 + z a^1}, \abs{a^0 z+ a^1})$, then, on $A_{r,r^\alpha}$, $\dd \ur = \dd \uo - \frac{r^2}{z^2}  \dd \ui$ has norm bounded from below by $\mu$. The cutoff function $\beta$ is not of importance since it is always multiplied by one of these linearly independent factors (in $u^h \nabla \beta$ or in $\beta \dd u^h$). Thus if the first order terms in local expansions are dominant, the same bounds hold on $A_{r^\alpha,r^\gamma}$. In short, we have proved the following lemma.
 \begin{lemma} \label{mindu}
$\exists r_\indr (a^0,a^1,\nabla^2u^h),d(a^0,a^1)$ such that $\forall z \in \{ z | \abs{z} \leq r_\therind \}, \abs{\dd u^h(z)} \geq d$, and $\forall r,z$ satisfying $\max(\abs{z},\abs{\frac{r^2}{z}} ) \leq r_\therind$, $\abs{ \dd \ur(z)} \geq d $. 
 \end{lemma}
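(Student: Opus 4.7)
The plan is to combine the local expansion of Lemma~\ref{devloc} with the $\cc$-linear independence of $a^0$ and $a^1$. First I would handle the individual curves: since $u^h(z) = a^h z + O(|z|^2)$ with remainder controlled by $\nabla^2 u^h$, we have $\dd u^h(z)(k) = a^h k + R_h(z)(k)$ with $|R_h(z)| \leq C_h |z|$, where $C_h$ depends only on the second derivatives of $u^h$. Choosing $r_\therind$ so small that $C_h r_\therind \leq \tfrac{1}{2}\min(|a^0|,|a^1|)$ yields $|\dd u^h(z)| \geq \tfrac{1}{2}\min(|a^0|,|a^1|)$ on the disc $|z|\leq r_\therind$, which settles the first assertion.

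For the second assertion, the $z \leftrightarrow r^2/z$ symmetry built into \eqref{ur} reduces matters to the region $|z| \geq r$. On $\{|z| > r^\gamma\}$ one has $u^r = u^0$ and the previous bound applies directly. On the annulus $A_{r,r^\alpha}$, $u^r(z) = u^0(z) + u^1(r^2/z)$, so differentiating in $z$ and using the two local expansions yields
\[
\dd u^r(z)(h) = a^0 h - \tfrac{r^2}{z^2} a^1 h + \bigl(R_0(z) + \tfrac{r^2}{z^2} R_1(r^2/z)\bigr) h.
\]
Setting $\zeta = -r^2/z^2 \in \cc$ with $|\zeta| \leq 1$, the leading part reads $h \cdot a^0 + (\zeta h) \cdot a^1$; by $\cc$-linear independence of $a^0, a^1$ the function $\zeta \mapsto |a^0 + \zeta a^1|$ never vanishes and tends to infinity with $|\zeta|$, so $\mu := \inf_{|\zeta| \leq 1}|a^0 + \zeta a^1|$ is strictly positive and depends only on $a^0, a^1$. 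This gives $|h \cdot a^0 + (\zeta h) \cdot a^1| \geq \mu |h|$. The remainder is $O(|z| + r^4/|z|^3) = O(r^\alpha)$ on $A_{r,r^\alpha}$, which can be absorbed into $\mu/2$ by taking $r$ small enough (depending on $a^h$ and on $\nabla^2 u^h$).

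It remains to verify that the cutoff $\beta$ does no harm on the transition annulus $A_{r^\alpha, r^\gamma}$. There $u^r = u^0 + \beta(|z|) u^1(r^2/z)$, and the contribution of the second summand to $\dd u^r$ splits into a piece $\beta \cdot \dd u^1(r^2/z) \circ (-r^2/z^2)$ of size $O(r^{2-2\alpha})$ and a piece $\nabla \beta \otimes u^1(r^2/z)$ of size $O(r^{2-2\alpha}/|\ln r^{\alpha-\gamma}|)$; both vanish as $r \to 0$ since $\alpha < 1$. Thus the leading term $\dd u^0 \sim a^0$ dominates and the first-step bound survives. Setting $d$ to be one half the minimum of $\mu$ and $\min(|a^0|,|a^1|)$ yields the required uniform bound on all regions.

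The main obstacle lies in the inner annulus $A_{r,r^\alpha}$: there $\dd u^0$ and $\dd(u^1 \circ (r^2/z))$ contribute at the same order and the argument genuinely uses $\cc$-linear independence (not just $\rr$-linear independence) of $a^0$ and $a^1$, via the positivity of $\mu$. The transition and exterior regions reduce either to the single-curve bound or to a negligible cutoff perturbation.
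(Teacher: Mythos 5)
Your proof is correct and follows essentially the same route as the paper: nontriviality of $a^h$ for the individual curves, and on $A_{r,r^\alpha}$ the positivity of $\inf_{\abs{\zeta}\leq 1}\abs{a^0+\zeta a^1}$ coming from $\cc$-linear independence, with the region $\abs{z}<r$ handled by the $z\mapsto r^2/z$ symmetry. The only (harmless) deviation is on the transition annulus $A_{r^\alpha,r^\gamma}$, where you absorb the $\beta$-terms as $o(1)$ perturbations of $a^0$ while the paper notes they are multiples of $a^1$ and invokes linear independence once more.
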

Recall that this lemma and remark \ref{etrang} are the only place where the fact that the curves are not tangent at their intersection point is used. However, the use of this lemma is to get a bound on the size of the preimage of the region where the almost-complex structure $J$ is modified. This bound can be obtained without this assumption, but remark \ref{etrang} would no longer hold.
This section is summarized in the next proposition.
 \begin{e-proposition}
 Let $\uo$ and $\ui$ be as in the assumptions of theorem \ref{add}. Let $J'$ be the almost complex structure on $M$ which is constant on a neighborhood  $B_R(m_0)$ of $m_0$ defined in \eqref{JR}. $\forall u$ such that $z \in B_\rho \imp \abs{\dd u(z)} \geq d $, $\exists c_7(\nr{\dd u}_{L^\infty},\nr{\nabla J}_{L^\infty},\nr{J \nabla J}_{L^\infty},d)$ which makes the following true:  $\pnr{D^J_u - D^{J'}_u} \leq c_7 R^{[2+\kappa (1-p)]/p}$. 
\par In particular, the curves $\uo, \ui$, and all the $\ur$ (as defined in \eqref{ur}) for $r<r_\therind$ satisfy this condition, for the same constant $c_7$ since their differential is uniformly bounded when $r< \min (r_6,r_7)$.
 \end{e-proposition}
In order to justify the assumption that $J$ was constant in a neighborhood of $m_0$ made in {\S}\ref{sinv}, it suffices to construct this $J'$. In the end, the inverse of $D^{J'}_\ur$ obtained will be an approximate inverse to $D^{J}_\ur$. The independence of $J'$ with respect to $r$ is crucial for this new structure to be usable.

\subsection{Realizing the sum} \label{sread}

This section presents the proof of theorem \ref{add}; it is a matter of setting up the situation so that proposition \ref{til} can be applied. First, by assumption we are given two curves $u^0$ and $u^1$ whose tangents at $0$ are linearly independent (over $\cc$) and the linearized operators $D_u^J$ are surjective of bounded right inverses. Let $p \in ]2,4[$, let $u^r$ be the family of maps introduced in \eqref{ur}, with parameters $\alpha = \frac{2}{3}$ and $\gamma = \frac{5}{6}$ (as specified in remark \ref{chalpha}). Thanks to lemma \ref{lpdu}, if $r<r_1=e^{-6}$ 
\[
\nr{\dd u^r}_{L^p} \leq \nr{\dd u^0}_{L^p} +\nr{\dd u^1}_{L^p} + c_1 r^{2/p}.
\]
On the other hand, when $r<r_2(\nabla^2u^h,C \nabla J)$, lemma \ref{orddb} states that
\[
\pnr{\db_J u^r} \leq c_2 r^{1+\eps}.
\]
Before we can invoke proposition \ref{til}, we must show that there is a bounded (independently  
 of $r$) tight inverse to $D_{u^r}$. Two uniform bounds (for $r$ sufficiently small) are obtained by lemmas \ref{c0du} and \ref{mindu}. The first gives an upper bound to $\abs{\dd u^r}$ when $r< r_6$. The second gives a lower bound for the differentials when $r<r_7$ and $r^2/r_7 <\abs{z} < r_7$. Next, when $R$ is small enough so that $B_R(m_0) \cap \del u^h(B_\rho(0) ) = \vide$, the operators $D_{u^0}^J$, $D_{u^1}^J$ and $D_{u^r}^J$ are arbitrarily close (by choosing $R$ arbitrarily small) from the one defined by the structure $J'$ of \eqref{JR}. The difference between these operators is uniform for all choices of parameter $r$ smaller than $r_6, r_7$ and $\rho$. 
\par Thus, $D^J_\uo$ and $D^J_\ui$ have bounded right inverses and so do $D^{J'}_\uo$ and $D^{J'}_\ui$. From these inverses we construct in section \ref{sinv} and under the assumption  
 that $r<r_5(s_p,s_H,c_4)$, an inverse to $D^{J'}_{\ur}$. The dependence  
  of $c_4$ on $J'$ will not be fatal since a choice of a smaller $r$  
  does not increase the difference between $D^J_\ur$ and $D^{J'}_{\ur}$. The bounded inverse of the second gives a bounded inverse for the first.
\par Implicit function theorem can now be used by choosing $\xi_0=0$ and $u=u^r$. The result is the $J$-holomorphic curve $\exp_u \xi$, where
\[
\nwup{\xi} < c r^{1+\eps}.
\]
In particular, thanks to Sobolev embedding, the sup norm of $\xi$ is bounded, and consequently the difference between the holomorphic map obtained by perturbation of $\ur$ and $\ur$ itself will be of the order of $r^{1+\eps}$.
\begin{remark}\label{etrang}
Let $\eps \in ]0,1/3[$, let $0< \rho_1 < \rho_2<r_0(\eps)$, let $h^{\rho_1}$ and $h^{\rho_2}$ be curves obtained by theorem \ref{add}, \ie by applying proposition \ref{til} to the maps $u^{\rho_1}$ and $u^{\rho_2}$. Then, for a $K \in \rr$, if $\rho_1<\rho_2(1- K \rho_2^\eps)$ these two curves are at a positive Hausdorff distance. This is seen by looking at the strangling the approximated solutions have close to the gluing point. On one hand, theorem \ref{add} states that the Hausdorff distance from $h^{\rho_i}$ to $u^{\rho_i}$ is bounded by $O(\rho_i^{1+\eps})$. On the other hand, the distance from $u^{\rho_1}$ to $u^{\rho_2}$ is at least $K' (\rho_2 - \rho_1)$.
\par Furthermore, the implicit function theorem (as in \cite[Theorem A.3.3]{mds1}) indicates that the dependence of $h^\rho$ on $u^\rho$ is continuous. Thus, there exists a $\rho_0$ such that the $h^\rho$ for $\rho <\rho_0$ realize all possible strangling. The strangling for a given $\rho$ is however not precisely known. It would be tempting to make the gluing construction so that the resulting map has a fixed strangling, however the author could not find a well-behaved measure of strangling for a curve of class $W^{1,p}$. 
\end{remark}

\section{Chains of curves} \label{recinf}

This section is concerned with gluing (under certain assumptions) an infinite number of $J$-holomorphic curves in order to obtain a $J$-holomorphic cylinder. Although the method applies to general situations, we could content ourselves with the following setting. Assume three $J$-holomorphic curves intersect at three points, then there is a $J$-holomorphic cylinder that curls up around those curves. The main point of this section is to introduce a new norm, $\ell^\infty(L^p)$, on the base space. This will enable to treat the infinite number of gluing as if only two were happening. As such, it is a preparatory step for the interpolation construction. 

\subsection{Cylinder and $\ell^\infty(L^p)$ norms}\label{senreci}

The way the infinite number of gluing will be made is of course important. Let $\SR_i = \cp^1$ be compact Riemann surfaces, let $z_{i;0}$ and $z_{i;\infty} \in \SR_i$ be two marked points on each surface, and let $u^i: \SR_i \to M$ be $J$-holomorphic maps (for $i \in \zz$) such that $\forall i \in \zz, u^i(z_{i;0}) = u^{i-1}(z_{i-1;\infty})$. Finally, let $\CR = \rr \times S^1 = \cc / \zz$ be the $J$-holomorphic cylinder. This section will construct a $J$-holomorphic map $u^{(r_i)}: \CR \to M$ which is arbitrarily close to the $u^i$, that is $u^{(r_i)}$ restricted to $[i,i+1] \times S^1$ is close to $u^i$ when $\sup \{r_i\} \to 0$.
\par The space $\CR = \rr \times S^1 = \cc / \zz$ will be given a peculiar metric so that each segment $[i,i+1]$ resembles a sphere with two discs removed (see if necessary figure in the version available on the author's website). Let $(r_i) \in \liz{\rr_{>0}}$. Let $g_{(r_i)}$ be a family of metrics defined as follows. Let $i \in \zz$, then $g_{(r_i)}$ is the metric induced by the map $\mu_{i;r_i,r_{i+1}}$ which embeds $[i,i+1]\times S^1$ into the compact Riemann surface $\SR_i$ with the two discs $B_{r_i}(z_{i;0})$ and $B_{r_{i+1}}(z_{i+1;\infty})$ removed.
\par The volume of such a surface is infinite. Thus, $L^p$ norms are not expected to behave nicely. However a slight alteration will do. Let us consider the $\sup$ of the $L^p$ norms on annuli around each circle $\{i\}\times S^1$. Let $V$ be a vector bundle over $\CR$ (with a connection and a norm) and let $\xi: \CR \to V$ be a section, define 
\begin{equation} \label{lilp}
\begin{array}{rl}
\nr{\xi}_{\ell^\infty(L^p)}  &= \supp{n\in \zz} \nr{\xi}_{L^p([n-\frac{2}{3},n+\frac{2}{3}] \times S^1)},\\
\nr{\xi}_{\ell^\infty(\wup)} &= \supp{n\in \zz} \nr{\xi}_{\wup([n-\frac{2}{3},n+\frac{2}{3}] \times S^1)}.
\end{array}
\end{equation}
These norms will retain all the properties we need and will allow to look at the problem only one gluing at a time. It has been pointed to the author that similar norms are used \cite{tsu2}. A proof identical to that of lemma \ref{csob}, allows us to deduce that Sobolev embedding holds with a constant which does not depend on the parameters $r_{i}$ (given they are sufficiently small).  
\begin{lemma} \label{csobli}
Suppose $d_{\SR_i}(z_{i;0}, z_{i;\infty}) \geq c_0$. Given that $\supp{i \in \zz} r_i < \frac{c_0}{3}$, there exists a constant $s'_p \in \rr_{>0}$ such that
\[
s'_p:= \sup_{0 \neq f \in C^\infty(\CR)} \fr{\nr{f}_{L^\infty}}{\nr{f}_{\ell^\infty(\wup)}}.
\]
\end{lemma}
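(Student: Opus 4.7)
The plan is to reduce the embedding $\ell^\infty(W^{1,p}) \hookrightarrow L^\infty$ on $\CR$ to a uniform Sobolev embedding on each slab $\Omega_n := [n - 2/3,\, n + 2/3] \times S^1$, and then invoke lemma \ref{csob} on each slab.

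First, since every point of $\CR$ lies in at least one $\Omega_n$, one has $\nr{f}_{L^\infty(\CR)} = \supp{n \in \zz} \nr{f}_{L^\infty(\Omega_n)}$. Thus it suffices to produce a constant $c$, independent of $n$ and of any admissible parameter sequence $(r_i)$, such that
\[
\nr{f}_{L^\infty(\Omega_n)} \leq c\, \nr{f}_{W^{1,p}(\Omega_n)},
\]
with both norms computed in the metric $g_{(r_i)}$ restricted to $\Omega_n$. Indeed, once such a $c$ is found, the right-hand side is bounded above by $c\, \nr{f}_{\ell^\infty(W^{1,p})}$ by definition of the $\ell^\infty(W^{1,p})$ norm, and one may set $s'_p := c$.

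Next, I would observe that the metric $g_{(r_i)}$ on $\Omega_n$ is, by construction, precisely the neck metric of the two-curve gluing situation of \cite[{\S}10.3]{mds1}: the half $[n - 2/3, n] \times S^1$ is identified via $\mu_{n-1;\, r_{n-1}, r_n}$ with an annular neighborhood of $\partial B_{r_n}(z_{n;\infty})$ inside $\SR_{n-1} \setminus \big( B_{r_{n-1}}(z_{n-1;0}) \cup B_{r_n}(z_{n;\infty}) \big)$, while the half $[n, n + 2/3] \times S^1$ is identified analogously with an annular neighborhood of $\partial B_{r_n}(z_{n;0})$ inside $\SR_n \setminus \big( B_{r_n}(z_{n;0}) \cup B_{r_{n+1}}(z_{n+1;\infty}) \big)$, the two halves being glued along the circle $\{n\} \times S^1$. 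Hence $\Omega_n$ is exactly one neck of parameter $r_n$ between two punctured copies of $\cp^1$, which is precisely the configuration for which lemma \ref{csob} is established.

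The remaining point---and the main obstacle---is ensuring that lemma \ref{csob} can be applied with a constant that does not degenerate as $n$ varies or as some $r_i$ tend to $0$. This is where the hypothesis $\supp{i \in \zz} r_i < c_0 / 3$ enters: together with $d_{\SR_i}(z_{i;0}, z_{i;\infty}) \geq c_0$, it ensures that on each $\SR_i$ the two excised disks $B_{r_i}(z_{i;0})$ and $B_{r_{i+1}}(z_{i+1;\infty})$ are at mutual distance bounded below by $c_0 / 3$, while the neck radius $r_n$ governing the conformal geometry of $\Omega_n$ is itself at most $c_0/3$. The hypotheses of lemma \ref{csob} are therefore satisfied uniformly across all slabs, producing a Sobolev constant $c$ that depends only on $p$ and $c_0$ (and on the fixed conformal model $\cp^1$) but not on $n$ or on $(r_i)$, which completes the proof with $s'_p := c$.
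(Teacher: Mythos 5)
Your proposal is correct and follows essentially the same route as the paper: localize to pieces compatible with the $\ell^\infty(\wup)$ norm and invoke the fact, underlying lemma \ref{csob}, that the Sobolev constant of a Fubini--Study sphere (or disc) with small discs removed stays bounded as the radii shrink, the hypotheses $d_{\SR_i}(z_{i;0},z_{i;\infty})\geq c_0$ and $\sup r_i<c_0/3$ giving uniformity over $i$. The only cosmetic difference is that you cut along the neck slabs $[n-\tfrac23,n+\tfrac23]\times S^1$ while the paper cuts along the surfaces $\SR_i$ minus two discs, and strictly speaking you should appeal to the neck estimate used in the proof of lemma \ref{csob} (the uniform bound for a disc minus a small disc) rather than to the statement of lemma \ref{csob} itself, since that lemma concerns the whole glued sphere and Sobolev constants do not automatically restrict to subdomains.
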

\begin{proof}
Each function can be decomposed as a sequence of functions on $\SR_i = \cp^1$ with $B_{r_n}(z_{n;0})$ and $B_{r_{n+1}}(z_{n+1;\infty})$ removed. The estimates follow from the fact that a ball with the Fubini-Study metric and one (or a fixed finite number of discs) removed has a Sobolev constant that remains bounded as the radius of the discs tends to $0$. See \cite{mds1} for details.
\end{proof}
The main result that will allow us to conclude is an adaptation of proposition \ref{til} to these norms.
\begin{e-proposition}\label{til2}
Let $\CR$ be one of the 1-dimensional non-compact complex manifolds described above. Let $p>2$. $\forall c_0, \exists \delta >0 $ such that for all volume forms $\dd \textrm{vol}_\CR$ on $\CR$ induced as above by the maps $\mu_{i;r_i,r_{i+1}}$, all continuous map $u$ such that $\dd u \in \ell^\infty(L^p)(\tg \CR,u^* \tg M)$, all $\xi_0 \in \ell^\infty(\wup)(\CR,u^*\tg M)$, and all $Q_u: \ell^\infty(L^p)(\CR, \Lambda^{0,1} \otimes_J u^*\tg M) \to \ell^\infty(\wup)(\CR,u^*\tg M)$ satisfying
\[
  \begin{array}{ccc}
    s'_p(\dd \textrm{vol}_\CR) \leq c_0, & \nr{\dd u}_{\ell^\infty(L^p)} \leq c_0, & \nr{\xi_0}_{\ell^\infty(\wup)} \leq \frac{\delta}{8}, \\
 \nr{ \db_J(\exp_u(\xi_0))}_{\ell^\infty(L^p)} \leq \frac{\delta}{4 c_0}, & D_u Q_u = \un, & \nr{Q_u} \leq c_0,
  \end{array}
\]
there exists a unique $\xi$ such that
\[
  \begin{array}[c]{ccc}
    \db_J(\exp_u(\xi_0+\xi))=0, & \nr{\xi+\xi_0}_{\ell^\infty(\wup)} \leq \delta, & \nr{\xi}_{\ell^\infty(\wup)} \leq 2 c_0 \pnr{\db_J(\exp_u(\xi_0))}_{\ell^\infty(L^p)}.
  \end{array}
\]
\end{e-proposition}
The proof will require the implicit function theorem in Banach spaces, as does the proof of proposition \ref{til}; it will be invoked again in section \ref{esphol}. 
\par The techniques are essentially the same as before, it suffices to insure that the curves $u^i$ and the points $m_i=u^i(z_{i;\infty})=u^{i+1}(z_{i+1;0})$ belong to a compact family, in other words that the parameters (\eg the radius $r_0$ below which the constructions can be performed, the kernel of the operators $D_{u^i}$, ...) remain controlled. For example, if the $u^i$ are just a finite number of curves infinitely repeated, the family is compact. Surjectivity in the sense of \cite[Theorem 10.1.2.iii]{mds1} is however much harder to get. Indeed, the kernel of the operators are (probably) no longer finite dimensional.  
\par Two types of constructions are possible at the points of intersection $m_i$, one can glue either as in \cite{mds1} or as in section \ref{chadd} of the present text. They are quite similar, except that the second uses stronger assumptions (yielding a more precise result). Indeed, in order to get the second construction, a condition on the tangent plane of the curves at the point of intersection is required.
\begin{e-definition}
Let $I\subset \zz$, then the curves $u^i$ are $I$-uniformly not tangent (at their points of intersection $m_i$) if $\forall i \in I$, there exist local charts $\psi_i:M \to \cc^m$ such that $\psi(m_i) = 0 \in \cc^n$ and $(\psi_i^* J) (0)= J_0$,
\[
\psi_i \circ u^i[1:z] = a_{i;\infty} z + O(\abs{z}^2) \quad \textrm{ and } \quad \psi_i \circ u^{i+1}[z:1] = a_{i+1;0}z + O(\abs{z}^2)
\] 
and $\exists d$ such that
\[
0<d < \underset{i \equiv i_0 \textrm{ mod k}}{\underset{\lambda \in \cc}{\inf}} \min (\abs{\lambda a_{i;0} + a_{i+1;\infty}}, \abs{a_{i;0} + \lambda a_{i+1;\infty}}).
\]
If this condition holds at every intersection (\ie $I=\zz$), we will simply say that they are uniformly not tangent.
\end{e-definition}
The following results is in two parts, depending on the transversality assumption made. The weaker, more standard transversality assumption is defined in the next section. 
\begin{theorem}\label{threcinf}
Let $J$ be an almost complex structure, and suppose it is regular in the sense of definition \ref{regtransi}. Let $z_{i;0} = [0:1] $ and $z_{i;\infty} = [1:0]\in \SR_i = \cp^1$. Let $u^i: \SR_i \to M$ be a compact family of $J$ holomorphic maps such that $u^i([0:1]) = u^{i+1}([1:0])$. Then there exist $c_2$ and $r_2 \in \rr_{>0}$ such that for all sequences $(r_i)_{i \in \zz}$ satisfying $r_{\sup{}} = \supp{i \in \zz} r_i \leq r_2$, there exists a $J$-holomorphic map $u^{(r_i)}$ such that the distance of $u^{(r_i)}(\CR)$ to $\cup u^i(\SR_i)$ is less than $c_2 r_2$. More precisely, 
\[
\forall z \in \mu_{i;r_i,r_{i+1}}^{-1}(\SR_i), \quad d_M(u^{(r_i)}(z), u^i(\mu_{i;r_i,r_{i+1}}(z))) \leq c_2 r_{\sup{}}.
\]
\indent If the curves are $I$-uniformly transverse, then there exists $r_3 \in \rr_{>0}$ such that for all sequence $(r_i)_{i \in \zz}$ satisfying $r_{\sup{}} = \supp{i \in \zz} r_i \leq r_3$ there exists a $J$-holomorphic map $v^{(r_i)}$ such that $\forall i \in I,$
\[
\begin{array}{rll}
  &\forall z \in (\phi_1 \circ \mu_{i;r_i,r_{i+1}})^{-1}\{A_{r_{i+1},r_{i+1}^{2/3}}(\infty) \},
    & \psi_i \circ v^{(r_i)} \circ \phi_1 \circ \mu_{i;r_i,r_{i+1}} (z) = a_{i;\infty} z + a_{i+1;0} \frac{r_{i+1}^2}{z}+ O(r_{\sup{}}^{1+\eps}) \\
\textrm{and}
  &\forall z \in \mu_{i+1;r_{i+1},r_{i+2}}^{-1} \{A_{r_{i+1},r_{i+1}^{2/3}}(0) \},
    & \psi_i \circ v^{(r_i)} \circ \mu_{i+1;r_{i+1},r_{i+2}} (z) = a_{i+1;0} z + a_{i;\infty} \frac{r_{i+1}^2}{z}+ O(r_{\sup{}}^{1+\eps})
\end{array}
\]
where $A_{r_1,r_2}(z_0) = B_{r_2}(z_0) \setminus B_{r_1}(z_0)$, $r_{\sup{}} = \supp{i \in \zz} r_i$ and $\psi_i:M \to \cc^m$ is a local chart that maps $m_i$ to $0$ and such that $(\psi_i^* J) (0)= J_0$.
\end{theorem}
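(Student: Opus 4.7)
The plan is to adapt the gluing construction of Section \ref{chadd} to the $\ell^\infty(L^p)$ setting introduced here and then invoke Proposition \ref{til2}. The advantage of the hybrid norms is that each window $[n-2/3, n+2/3] \times S^1$ contains only one gluing region (near $m_n$), so every local estimate of Section \ref{chadd} transports verbatim to a uniform estimate on $\CR$.

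\emph{Approximate solution.} On each $[i,i+1] \times S^1$, identified via $\mu_{i;r_i,r_{i+1}}$ with $\SR_i$ minus two small discs around $z_{i;0}$ and $z_{i;\infty}$, I define $u^{(r_i)}$ using the formula \eqref{ur} at each intersection point $m_i$ (with parameter $r = r_{i+1}$ at the gluing between $u^i$ and $u^{i+1}$). For the first conclusion I use the basic pre-gluing of \cite[\S10]{mds1}, and for the second, under $I$-uniform transversality, the refined construction \eqref{ur} with $\alpha = 2/3$, $\gamma = 5/6$ written in the local charts $\psi_i$, where the tangent data are $a_{i;\infty}, a_{i+1;0}$. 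Compactness of the family $\{u^i\}$ yields uniform $C^2$-bounds and uniform local-chart data, so Lemma \ref{lpdu} gives $\nr{\dd u^{(r_i)}}_{\ell^\infty(L^p)} \leq c$ and Lemma \ref{orddb} gives $\nr{\db_J u^{(r_i)}}_{\ell^\infty(L^p)} = O(r_{\sup{}}^{1+\eps})$ in the refined case (respectively of appropriate order $O(r_{\sup{}})$ in the basic case), with constants independent of the sequence $(r_i)$.

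\emph{Uniform right inverse (the main obstacle).} The regularity of $J$ together with compactness of the family produces right inverses $Q_{u^i}$, and hence also glued inverses $Q_{u^i,u^{i+1},r_{i+1}}$ constructed as in \S\ref{sinv}, all of whose operator norms are bounded uniformly in $i$ and in the admissible range of $r$. Given $\eta \in \ell^\infty(L^p)(\CR, \Lambda^{0,1} \otimes_J u^{(r_i)*}\tg M)$, I cut it along each circle of radius $r_n$ inside the $n$-th gluing annulus, apply the local inverse $Q_{u^{n-1},u^n,r_n}$ to each pair of pieces abutting $m_{n-1}$, and reassemble via the cutoff formula \eqref{xir} to obtain an approximate inverse $T_{u^{(r_i)}}$. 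The error $\un - D_{u^{(r_i)}} T_{u^{(r_i)}}$ is supported in the cutoff annuli; because $\ell^\infty(L^p)$ only monitors one window at a time, the computation of \eqref{brnpinv} combined with Lemma \ref{dbetaxi} yields an error of norm at most $1/2$ once $r_{\sup{}}$ is small. A Neumann series as in \eqref{qur} then produces the genuine right inverse $Q_{u^{(r_i)}}$ with uniformly bounded norm. As in \S\ref{jpc}, one first replaces $J$ by a modification that is constant near each $m_i$; compactness of $\{m_i\}$ allows a single modification radius $R$ to work at every intersection simultaneously.

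\emph{Conclusion.} Proposition \ref{til2} applied with $\xi_0 = 0$ and $u = u^{(r_i)}$ yields a unique perturbation $\xi$ with $\nr{\xi}_{\ell^\infty(\wup)} \leq 2c_0 \nr{\db_J u^{(r_i)}}_{\ell^\infty(L^p)}$; Lemma \ref{csobli} then gives $\nr{\xi}_{L^\infty} = O(r_{\sup{}})$ in the basic case and $O(r_{\sup{}}^{1+\eps})$ under transversality. Setting $v^{(r_i)} = \exp_{u^{(r_i)}} \xi$, the distance estimate of the first conclusion is immediate. For the second, since the refined approximate solution $u^{(r_i)}$ already carries the explicit local form $a_{i;\infty} z + a_{i+1;0} r_{i+1}^2/z$ on $A_{r_{i+1}, r_{i+1}^{2/3}}$ by construction, and the perturbation $\xi$ is of strictly smaller order $O(r_{\sup{}}^{1+\eps})$ in $L^\infty$, the stated local expansion for $\psi_i \circ v^{(r_i)}$ follows.
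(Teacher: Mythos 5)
Your overall architecture (pre-glue on each segment, estimate $\db_J$ and $\dd u$ in the hybrid norms, build a uniform right inverse, apply Proposition \ref{til2} with $\xi_0=0$, and read off the expansion from the explicit approximate solution plus the $O(r_{\sup{}}^{1+\eps})$ perturbation) matches the paper, but your construction of the right inverse has a genuine gap. You propose to cut $\eta$ along the circles of radius $r_n$ and to apply, for each intersection point $m_{n-1}$, the \emph{pairwise} glued inverse $Q_{u^{n-1},u^n,r_n}$ of \S{}\ref{sinv} to the two adjacent pieces. But each sphere piece $\eta^n$ abuts two intersection points, so this procedure produces \emph{two} vector fields along the same curve $u^n$: one matching its left neighbour at $m_{n-1}$, one matching its right neighbour at $m_n$. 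Both solve $D_{u^n}\zeta=\eta^n$, so their difference is an element of $\ker D_{u^n}$, which is nonzero and of size comparable to $\nr{\eta}_{\ell^\infty(L^p)}$ -- it does not shrink as $r_{\sup{}}\to 0$. To reassemble you must therefore interpolate between them somewhere in the interior of the sphere, and there the error term $\db\beta\cdot(\xi-\xi')$ is of order $\nr{\eta}$ with a constant that does not become small (the smallness in \eqref{brnpinv} comes from Lemma \ref{dbetaxi} applied to a field vanishing at the gluing point over a shrinking annulus, neither of which is available in the middle of the sphere). Hence $\nr{\un-D_{u^{(r_i)}}T_{u^{(r_i)}}}\leq \tfrac12$ cannot be guaranteed and the Neumann series \eqref{qur} does not close.

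The missing ingredient is precisely Lemma \ref{surji}: the regularity of Definition \ref{regtransi} is not only surjectivity of each $D_{u^i}$ (and of the pairwise operators), it is transversality of the $\zz$-indexed evaluation map $\textit{ev}_\zz$, which makes the \emph{global} operator $D_{u^\zz}$ surjective on the space $\wup_{u^\zz}$ of tuples $(\xi^i)$ satisfying \emph{all} matching conditions $\xi^i(z_{i;\infty})=\xi^{i+1}(z_{i+1;0})$ simultaneously. This is what the paper uses: one inverts $D_{u^{\zz;(r_i)}}$ (a small perturbation of $D_{u^\zz}$) once, obtaining a single coherent family $(\xi^i)$, and the surgery \eqref{xir} is then performed only in the small annuli near the gluing circles, where $\xi^i-\xi_{m_i}$ is small and Lemma \ref{dbetaxi} supplies the $\abs{\ln r}^{-(1-1/p)}$ gain; the $\ell^\infty(L^p)$ norm sees at most one such annulus per window, so \eqref{brnpinv} applies verbatim. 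Your proof becomes correct if you replace the pairwise inversion step by this global inversion; as written, the transversality hypothesis of Definition \ref{regtransi} is never actually used beyond the two-curve case, which is a sign that the argument cannot be complete.
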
 
\par Let us begin by the proof of proposition \ref{til2}. It relies on \cite[Theorem A.3.3]{mds1}. It must be checked that the linearization of $\db_J$ does not vary too much depending on the point at which it is taken. Viewing this as Newton's method, it is the same as requiring the second derivative to be bounded. The methods used here are thus essentially the same as in \cite[{\S}3.5]{mds1}. Amongst other things, this part of the argument works even if $\SR_i \neq \cp^1$.
\par We introduce notations again: 
\[
\jo{X}_u = \ell^\infty(\wup)(\CR, u^*\tg M) \qquad \textrm{and} \qquad \jo{Y}_u = \ell^\infty(L^p)(\CR, \Lambda^{0,1}\otimes_J u^*\tg M).
\]
We are interested in the map $\jo{F}_u : \jo{X}_u \to \jo{Y}_u$ which is given by pulling back by parallel transport to $u$ the $1$-form $\db_J (\exp_\xi u)$. 
\begin{lemma}\label{bds} (\cf \cite[Proposition 3.5.3]{mds1})
Let $\CR$ be a manifold describe above, and let $p>2$. Then for all constants $c_0 >0$ there is a real number $c_1>0$ such that $\forall u \in \ell^\infty(\wup)(\CR,M), \forall \xi \in \ell^\infty(\wup)(\CR,u^*\tg M)$ and for all metrics satisfying
\[
\nr{\dd u }_{\ell^\infty(L^p)} \leq c_0, \qquad \nr{\xi}_{\ell^\infty} \leq c_0 \qquad \textrm{and} \qquad s'_p(\dd vol_\CR) \leq c_0,
\]
then
\[
\nr{\dd \jo{F}_u(\xi) - D_u} \leq c_1 \nr{\xi}_{\ell^\infty(\wup)}.
\]
where the norm on the left-hand side is the norm of linear operators $\jo{L}(\jo{X}_u,\jo{Y}_u)$.
\end{lemma}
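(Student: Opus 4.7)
The plan is to adapt the proof of \cite[Proposition 3.5.3]{mds1} to the $\ell^\infty$-flavoured norms. The key observation is that the estimate in the compact case is purely local: it comes from a pointwise (in $z \in \CR$) bound on $\dd \jo{F}_u(\xi)\eta(z) - D_u \eta(z)$ obtained by expanding the exponential and parallel transport maps in a local chart around $u(z)$. Once such a pointwise bound is in hand, taking the $L^p$ norm over each strip $[n-\tfrac23,n+\tfrac23]\times S^1$ and then the supremum over $n \in \zz$ produces the desired $\ell^\infty(L^p)$ estimate.

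More concretely, writing $\jo{F}_u(\xi) = \Phi(u,\xi)^{-1} \db_J(\exp_u \xi)$, where $\Phi(u,\xi)$ denotes parallel transport along $t \mapsto \exp_u(t\xi)$, one differentiates in $\xi$ and compares with $D_u$. The difference $\dd \jo{F}_u(\xi)\eta - D_u \eta$ then decomposes as a sum of terms each of which is pointwise bounded by expressions of the form
\[
c\big( \abs{\xi}\cdot \abs{\nabla \eta} + \abs{\xi}\cdot \abs{\eta}\cdot \abs{\dd u} + \abs{\xi}^2 \cdot (\cdots) \big),
\]
with constant $c$ depending on the first and second derivatives of $J$ and of $\exp$, evaluated in a tubular neighborhood of $u$; this is the computation carried out in \cite[{\S}A.3]{mds1} and \cite[Proposition 3.5.3]{mds1}. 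Thanks to $\nr{\xi}_{\ell^\infty} \leq c_0$, the relevant Taylor expansions converge in a uniform tubular neighborhood of $u(\CR)$.

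Now fix $n$. On the strip $S_n := [n-\tfrac23,n+\tfrac23]\times S^1$, apply H\"older to the pointwise bound and use the Sobolev embedding of lemma \ref{csobli} to pass $\nr{\xi}_{L^\infty(S_n)}$ and $\nr{\eta}_{L^\infty(S_n)}$ to $\nr{\xi}_{\ell^\infty(\wup)}$ and $\nr{\eta}_{\ell^\infty(\wup)}$, while the terms $\nr{\nabla \eta}_{L^p(S_n)}$ and $\nr{\dd u}_{L^p(S_n)}$ are already bounded by $\nr{\eta}_{\ell^\infty(\wup)}$ and $\nr{\dd u}_{\ell^\infty(L^p)}\leq c_0$ respectively. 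The resulting constant depends only on $c_0$, $s'_p$, and the $C^2$-bounds on $J$ (which are uniform, since $M$ is fixed and compact subsets of $M$ suffice by the $L^\infty$ bound on $\xi$). Taking $\sup_n$ then yields
\[
\nr{\dd \jo{F}_u(\xi)\eta - D_u \eta}_{\ell^\infty(L^p)} \leq c_1 \, \nr{\xi}_{\ell^\infty(\wup)} \, \nr{\eta}_{\ell^\infty(\wup)},
\]
as claimed.

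The one point that requires care is the uniformity of all the involved constants in the parameters $(r_i)$ defining the metric $g_{(r_i)}$ on $\CR$. This is precisely the content of lemma \ref{csobli} and of the parallel remark in \cite[{\S}10.3]{mds1}: each strip $S_n$ embeds isometrically into a Fubini--Study sphere with two small discs removed, and on such a surface the Sobolev and elliptic constants remain bounded as the radii shrink. Consequently the pointwise-to-$L^p$-to-$\ell^\infty(L^p)$ passage above carries through with constants independent of $(r_i)$, which is the main (modest) obstacle compared to the compact case.
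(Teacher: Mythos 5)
Your proposal is correct and follows essentially the same route as the paper, which simply observes that the proof of \cite[Proposition 3.5.3]{mds1} goes through verbatim once $L^p$ norms are replaced by $\ell^\infty(L^p)$ norms, the key points being exactly the ones you make: the estimate is local (strip by strip), and the Sobolev constant of lemma \ref{csobli} is uniform in the gluing parameters.
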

The proof of this lemma is identical to that of \cite{mds1}, up to the change of $L^p$ norms for $\ell^\infty(L^p)$ norms. 
\begin{proof}[Proof of proposition \ref{til2}:] (\cf \cite[p.69]{mds1}) By assumption, $D_u$ has a bounded right inverse $Q_u$ ($\nr{Q_u} \leq c_0$). Let $c_1$ be the constant from lemma \ref{bds}, and let $\delta \in ]0,1[$ be such that $c_1 \delta < 1 /2c_0$. Then lemma \ref{bds} insures that $\nr{\dd \jo{F}_u(\xi) - D_u} \leq 1/2c_0$ if $\nr{\xi} \leq \delta$. The assumptions of the implicit function theorem \cite[Proposition A.3.4]{mds1} are consequently satisfied (with $X = \jo{X}_u$, $Y = \jo{Y}_u$, $f = \jo{F}_u$, $x_0=0$, $c_0 = c$ and the same $\delta$).
\end{proof}

\subsection{Transversality and right inverse}

Before the theorem can be put to good use, it is better to check that the surjectivity of linearized operators holds in a reasonable class of spaces. Recall that $\jo{M}^*(A^i,\SR_i; J)$ is the space of $J$-holomorphic maps $\SR_i \to M$ that are somewhere injective and represent the homology class $A^i$ in $H^2(M)$.
\begin{e-definition}\label{regtransi}
Let $\forall i \in \zz, A^i \in H^2(M,\zz)$, let $\SR_i$ be Riemann surfaces. The structure $J$ will be said regular for $(A^i)_{i \in \zz}$ and $(\SR_i)_{i \in \zz}$ if $J \in \cap_{i \in \zz} J_{reg}(\SR_i,A^i)$ and if the evaluation map:
\[
\begin{array}{rclc}
\textit{ev}_\zz:& \liz{ \jo{M}^*(A,\CR;J)}   &\to    & \liz{M \times M}  \\
                &           (u^i)_{i \in \zz} &\mapsto& (u^i(z_{i;\infty}), u^{i+1}(z_{i+1;0}))_{i \in \zz}
 \end{array}
\]
is transverse to $\Delta^\zz = \liz{\Delta}$ where $\Delta = \{ (m,m) \subset M \times M \}$. The set of structures satisfying these conditions will be written $\jo{J}_{reg}((\SR_i)_{i \in \zz},(A^i)_{i \in \zz})$ or more simply $\jo{J}_{reg}(\SR_i,A^i)$.
\end{e-definition}
\indent Although from our point of view the almost complex structure is given, it is wise to show that structures that are regular are abundant. As the intersection of a countable number of dense open subsets is still a set of the second category, to show that $\jo{J}_{reg}(\SR_i,A^i)$ is of the second category only requires the study of $\dd \textit{ev}_\zz$.
\par This would require an adaptation of theorem \cite[Theorem 6.3.1]{mds1} (which insures transversality for curves glued according to a finite tree). $\zz$ can be seen as an infinite tree, and so the question can be asked in general for an infinite tree $T$ of bounded degree. It might be tempting to proceed as follows: take an increasing sequence of finite subtrees of $T$, say $\{T_i\}$. Thanks to theorem \cite[Theorem 6.3.1]{mds1} the set of structures for which the evaluation on the tree $T_i$ is transversal is of the second category. The intersection of these sets should yield a set of the second category. 
\par In what follows we shall suppose that the structure $J$ on $M$ is regular in the sense of definition \ref{regtransi}. This assumption is not so strong, especially since, in the cases of interest, the $u^i$ will be a periodic sequence of curves (\ie $\exists n\in \zz_{>0}$ such that $u^i = u^j$ if $i \equiv j \modu (n)$). Thus the {\it a priori} infinite condition of definition \ref{regtransi} are actually finite. Let us assume that each curve $u^i \in \jo{M}^*(A^i, \SR_i;J)$ is such that $\dd \ev_0$ (its evaluation at $0 \in \SR_i = \cp^1$) is surjective. In other words, for each curve it is possible to  choose an infinitesimal perturbation (which is also $J$-holomorphic) in such a way that this perturbation displaces $u^i(0)$ in any chosen direction (note that we do not make any assumption on the effect of this perturbation at $\infty \in \cp^1$). Then the evaluation is surjective. Indeed, if we are given a infinitesimal displacement at each point of the gluing, making it equal to the difference between the displacement of $u^i(\infty)$ and of $u^{i+1}(0)$ amounts to solve $n$ equations knowing that in each equation we can fix the value of a term (the one coming from the displacement of $u^{i}(0)$). Since it is a finite system (by periodicity) it is solvable. Whence the surjectivity of evaluation.
\par Finally, note that in $\cp^n$ endowed with its usual structure, these assumptions hold (at least for some $A^i$) since between any two distinct points of $\cp^n$ there is a line (or a conic). 
\par Let's define the moduli space
\[
\jo{M}^*(A^i,\SR_i;J)= \{(u^i)_{i \in \zz} \in \liz{ \jo{M}^*(A,\CR;J)} | \forall i \in \zz, u^i(z_{i;\infty}) = u^{i+1}(z_{i+1;0}) \}.
\]
It is not excluded that the dimension of this space might be finite. For example, if almost all $A^i$ have a trivial first Chern class, it might happen that the dimension of the modular space is $2n+2 \sum c_1(A^i)$. In the present context, we will be interested in a subset of the moduli space when $\SR_i = \cp^1$:
\[
\jo{M}^*_\zz(C) := \jo{M}^*(A^i;J;C) := \{(u^i) \in \jo{M}^*(A^i,\SR_i = \cp^1;J)  | \pnr{\dd u^i}_{L^\infty} \leq C , \forall i \in \zz \}.
\]
\par What matters is that transversality of definition \ref{regtransi} implies surjectivity of the linearized operator even if it is restricted to vector fields who do not alter the intersection property. Recall that for $u^i:\SR_i \to M$, 
\[
\begin{array}{rl}
  \wup_{u^i}  &= \wup(\SR_i, u^{i*}\tg M) \\
L^p_{u^i} &= L^p(\SR_i, \Lambda^{0,1}\tg^*\SR_i \otimes_J u^{i*}\tg M).
\end{array}
\]
Given $u^i: \SR_i \to M$, such that $u^i(z_{i;\infty}) = u^{i+1}(z_{i+1;0})$, denote by
\[
\wup_{u^\zz} := \left\{ (\xi^i)_{i \in \zz} \in \timesi \wup_{u^i} | \xi^i(z_{i;\infty}) = \xi^{i+1}(z_{i+1;0})  \right\}.
\]
(The evaluation of $\wup$ sections makes sense since $p>2$.) 
\begin{lemma} \label{surji}
Suppose $J$ is regular in the sense of definition \ref{regtransi}, \ie the operators $D_{u^i}: \wup_{u^i} \to L^p_{u^i}$ are surjective and {\it ev}$_\zz$ is transverse, then the operator
  \[
  \begin{array}{rccc}
  D_{u^\zz} :& \wup_{u^\zz}        & \to     & \timesi  L^p_{u^i} \\
             & (\xi^i)_{i \in \zz} & \mapsto & (D_{u^i} \xi^i)_{i \in \zz}
  \end{array}
  \]
  is surjective. 
\end{lemma}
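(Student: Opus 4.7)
The plan is to solve the equation $D_{u^\zz}(\xi^i) = (\eta^i)$ in two stages: first lift each $\eta^i$ individually using surjectivity of $D_{u^i}$, then correct the resulting mismatches at the intersection points $m_i$ by adding elements of $\ker D_{u^i}$, invoking the transversality of $\textit{ev}_\zz$.

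First, fix a right inverse $Q_{u^i}$ of $D_{u^i}$ for each $i \in \zz$ (whose existence follows from the assumed regularity) and set $\xi^i_0 := Q_{u^i}\eta^i \in \wup_{u^i}$. Then $D_{u^i}\xi^i_0 = \eta^i$ for every $i$, but the family $(\xi^i_0)$ need not satisfy the matching condition. Its defect is the sequence
\[
\delta_i := \xi^{i+1}_0(z_{i+1;0}) - \xi^i_0(z_{i;\infty}) \in T_{m_i} M,
\]
which lies in $\liz{TM \to \Delta^\zz}$ by the Sobolev embedding $\wup \hookrightarrow C^0$ (recall $p>2$) together with the uniform bound on $\nr{Q_{u^i}}$ coming from the compactness of the family in $\jo{M}^*_\zz(C)$.

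Next, rephrase the transversality hypothesis on $\textit{ev}_\zz$ in linearized form. The tangent space at $(u^i)$ of the parametrized moduli space $\jo{M}^*(A^i,\SR_i;J)$ is naturally identified with $\liz{\ker D_{u^i}}$, and the derivative of $\textit{ev}_\zz$ sends $(\zeta^i)$ to the sequence $(\zeta^i(z_{i;\infty}),\, \zeta^{i+1}(z_{i+1;0}))_i$. Transversality of $\textit{ev}_\zz$ to $\Delta^\zz = \liz{\Delta}$ therefore asserts that for every sequence $(\alpha_i,\beta_i)$ in $\liz{T_{m_i}M \times T_{m_i}M}$ one can decompose $(\alpha_i,\beta_i) = d\textit{ev}_\zz(\zeta^i) + (\gamma_i,\gamma_i)$ with $(\zeta^i) \in \liz{\ker D_{u^i}}$ and $(\gamma_i) \in \liz{T_{m_i}M}$. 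Subtracting the two components shows that the map
\[
(\zeta^i) \longmapsto \bigl(\zeta^{i+1}(z_{i+1;0}) - \zeta^i(z_{i;\infty})\bigr)_i
\]
is surjective from $\liz{\ker D_{u^i}}$ onto $\liz{T_{m_i}M}$. Apply this to the defect sequence $(-\delta_i)$ to obtain $(\zeta^i) \in \liz{\ker D_{u^i}} \subset \wup_{u^\zz}$ with $\zeta^{i+1}(z_{i+1;0}) - \zeta^i(z_{i;\infty}) = -\delta_i$.

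Finally, set $\xi^i := \xi^i_0 + \zeta^i$. Then $D_{u^i}\xi^i = D_{u^i}\xi^i_0 + 0 = \eta^i$ and
\[
\xi^{i+1}(z_{i+1;0}) - \xi^i(z_{i;\infty}) = \delta_i + (-\delta_i) = 0,
\]
so $(\xi^i) \in \wup_{u^\zz}$ is a preimage of $(\eta^i)$, which proves surjectivity. The main obstacle is the step that produces the corrections $(\zeta^i)$ within the space $\liz{\ker D_{u^i}}$ rather than as an unbounded sequence: one needs the difference-of-evaluation map restricted to $\liz{\ker D_{u^i}}$ to admit a \emph{bounded} right inverse, uniformly in the data. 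This uniform bound is what the compactness assumption on the family $\jo{M}^*_\zz(C)$ and (in the cases of interest) the periodicity of the curves furnish, reducing the bounded inverse at infinity to the finite-dimensional situation handled by transversality.
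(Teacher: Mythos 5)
Your argument is correct and is essentially the paper's proof: first lift each $\eta^i$ using the surjectivity of $D_{u^i}$, then repair the mismatches at the intersection points by elements of $\ker D_{u^i}$ (tangent vectors to the moduli spaces) furnished by the transversality of $\textit{ev}_\zz$ to the diagonal. The paper phrases the correction by choosing $(\zeta^i)$ so that $\dd \textit{ev}_\zz((\zeta^i))$ differs from the evaluations of the $(\xi^i)$ by a diagonal element and then subtracting, which is exactly your defect-and-subtract computation; your closing remarks on uniform bounds are not needed for the lemma as stated (the source and target are plain products), and the paper defers that issue to the later discussion of bounded right inverses.
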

\begin{proof}
Let $\eta^i \in L^p_{u^i}$ (where $i \in \zz$). Each of the $D_{u^i}$ being surjective, there exist $\xi^i \in \wup_{u^i}$ such that $D_{u^i} \xi^i = \eta^i$. Since the evaluation is transverse to the diagonal, choose $\zeta^i \in \tg_{u^i} \jo{M}^*(A^i;J)$ so that
 \[
 \dd \textit{ev}_\zz ((u^i)_{i \in \zz})((\zeta^i)_{i \in \zz})= (\zeta^i(z_{i;\infty}),\zeta^{i+1}(z_{i+1;0}))_{i \in \zz} \in \timesi \bigg( (\xi^i(z_{i;\infty}),\xi^{i+1}(z_{i+1;0}))+ \tg_{(m_i,m_i)}\Delta \bigg)
 \]
where {\it ev}$_\zz$ is the map defined in \ref{regtransi}, $m_i=u^i(z_{i;\infty})=u^{i+1}(z_{i+1;0})$ and $\Delta \subset M\times M$ is the diagonal. Then $(\xi^i-\zeta^i)_{i \in \zz}$ is an element of $\wup_{u^\zz}$ whose image by $D_{u^\zz}$ is also $(\eta^i)_{i \in \zz}$.
\end{proof}
\indent In order to use proposition \ref{til2} we have to describe an approximate solution and show that it has bounded right inverse. To do so, two choices are possible: either the method of \cite[{\S}10]{mds1} or the one from section \ref{chadd} (if the curves $u^i$ are transversal at their point of intersection, so in particular $\dim_\rr M \geq 4$). The second is of interest since by remark \ref{etrang} it could allow to prescribe different characteristics  of curves (strangling), an idea that will be used again in section \ref{esphol}. These two situations are dealt with in an identical fashion. The main point is to notice that in the constructions the approximate solutions differ from the initial curves only in a neighborhood of the points of intersection. Similarly, the approximate inverses only differ from a true inverse in those neighborhoods.
\par Suppose that we are trying to use the construction of section \ref{chadd}, some problem might arise from the fact that a subsequence of $\{m_i\}$ may be arbitrarily close. Modifying the almost complex structure $J$ would then be a problem. To avoid this, it is again necessary to introduce structures which depend on a point of the domain. We will not detail this argument. Furthermore, the construction used a reparametrization of one of the curves; we present what this means in the present context. 
\par Suppose we are in the case where $z_{i;0} = [0:1] =0$ and $z_{i;\infty} = [1:0] = \infty \in \SR_i = \cp^1$. Let $\phi_r: \cp^1 \to \cp^1$ be defined by $\phi_r(z) = r^2/z$. Then, the condition of intersection is $u^i \circ \phi_1 (0) = u^{i+1}(0)=m_i$, and the local expansion in a chart $\psi_i:M \to \cc^m$ which sends $m_i$ to $0$ and such that $(\psi_i^* J) (0)= i$ is
\[
\psi_i \circ u^{i+1}[z:1] = a_{i+1;0}z + O(\abs{z}^2) \quad \textrm{ and } \quad \psi_i \circ u^i[1:z] = \psi_i \circ u^i\circ \phi_1 [z:1] = a_{i;\infty} z + O(\abs{z}^2).
\]
The ring $A_{r_{i+1}^{4/3},r_{i+1}^{2/3}}$ corresponds on $\CR$ to the $z \in [i,i+1] \times S^1$ such that $\phi_1 \circ \mu_{i;r_i,r_{i+1}}(z) < r_{i+1}^{2/3}$ and to the $z \in [i+1,i+2] \times S^1$ such that $\mu_{i+1;r_{i+1},r_{i+2}}(z) < r_{i+1}^{2/3}$.
\par The arguments used for the $L^p$ norm will be adapted without pain to the $\ell^\infty(L^p)$ context: in this norm there is at most one gluing to consider at a time. It suffices to check that the curves $u^i$ and the points $m_i=u^i(z_{i;\infty})=u^{i+1}(z_{i+1;0})$ belong to a compact family (\eg a finite family). 
\par We now transpose the methods of {\S}\ref{sinv} to conclude. 
\begin{proof}[Proof of theorem \ref{threcinf}: ]
With a small deformation the $J$-holomorphic curves $u^i$ can be modified into maps $u^{i;r_i,r_{i+1}}$; this deformation is identical to the one which changes $u^0$ into $u^{0,r}$ except it takes places at two points of $\cp^1$. Thus, the operators $D_{u^i}$, and their inverses, are close to $D_{u^{i;r_i,r_{i+1}}}$. The opertor $D_{u^\zz}$ also is also close (in the norm of linear maps $\ell^\infty(\wup) \to \ell^\infty(L^p)$) to an operator $D_{u^{\zz;(r_i)}}$ where the $D_{u^{i;r_i,r_{i+1}}}$ take place of the $D_{u^i}$. It is also surjective and their inverses are close.
\par We describe the map $u^{(r_i)}:\CR \to M$ (we will write $u= u^{(r_i)}$ for short in this paragraph) which will be an approximate solution, in the sense that $\pnr{\db_J u}_{\ell^\infty(L^p)}$ is small, and that $D_u$ will have a bounded right inverse. It will be defined by composing the maps $\mu_{i;r_i,r_{i+1}}:[i,i+1]\times S^1 \to \SR_i$ with the maps $u^{i;r_i,r_{i+1}}:\SR_i \to M$. Then a $(0,1)$-form, say $\eta$, along $u$ can be cut in pieces to give rise to $\eta^i$ along each $u^{i;r_i,r_{i+1}}$ (by extending by $0$, that is in an analogous way as \eqref{etar} where $\eta^0$ and $\eta^1$ were obtained from $\eta$). From these $\eta^i$, the inverse of $D_{u^{\zz;(r_i)}}$ will give vector fields along the $u^{i;r_i,r_{i+1}}$, say $\xi^i$. These vector fields can be glued by a surgery (which copies the definition of $\xi^r$ in \eqref{xir}) to get a vector field $\xi^{(r_i)}$ along $u$. The computation made in \eqref{brnpinv} still works out in an identical fashion at each point of intersection. By definition of the $\ell^\infty(L^p)$ norm, this construction produces an approximate inverse to $D_u$. By the technique used in \eqref{qur}, a true bounded inverse is then found. This allows the use of proposition \ref{til2} and finishes the proof.
\end{proof}

\section{Interpolation and its consequences} \label{esphol}

In this section, we give an example of a space of pseudo-holomorphic maps which is of positive mean dimension. As before the cylinder will be noted $\CR = \rr \times S^1 = \cc / \zz$. We will assume that the of curves $u^i$ is of finite type (periodic), in the sense that only a finite number of distinct maps are described as $i$ runs over $\zz$. The theorem \ref{receval} could also be proven using the gluing introduced in \cite{mds1}. However, in order to prove proposition \ref{imcjh} and to apply lemma \ref{upprec}, it is necessary to have approximate solutions which are injective (with a discrete set of exceptions). This is incompatible with an approximate solution which is constant on a whole ring.

\subsection{The interpolation theorem}\label{hypint}

Apart from the interpolation itself, the results of theorem \ref{inter} have now been covered. As we cannot unfortunately gain information from the parameters $r_i$, we shall throughout this section take them to be all equal $r_i=:r$. We will also use the notation $ \mu_{i;r} := \mu_{i;r_i,r_{i+1}}$.
\par Let us recall all the assumptions we shall need. 
\begin{enumerate}\renewcommand{\labelenumi}{{\normalfont H\arabic{enumi} -}}
\item For $i \in \zz$, there exists $N \in \zz_{>2}$ and $J$-holomorphic curves $u^i: \cp^1 \to M$ with $p_i:=u^i(\infty) = u^{i+1}(0)$ and $u^{i+N} = u^i$.
\item The curves at $p_i$ are not tangent. (Necessary for the results of \S{}\ref{ntfam} and \S{}\ref{p:simp})
\item $J$ is supposed regular in the sense of definition \ref{regtransi} and of class $C^2$.
\item One of the maps, say $u^j$, will be assumed to live in a family that covers the neighborhood of some point $u^j(z_*)$ for $z_* \in \cp^1$ (it is a ``free curve'').
\end{enumerate}
\par This last assumption is more precisely stated as follows. For a fixed $j \in \{1,2,\ldots N\}$, there exists a point $z_* \in \cp^1$ and a family of $\wup$ vector fields along $u^j$ which belong to the kernel of $D_{u^j}$ and such that the map defined by $\xi \mapsto \exp_{u^j(z_*)}{\xi(z_*)}$ is surjective on a neighborhood of $m_* = u^j(z_*)$. Thus, to $x \in \tg_{m_*} M$ we will associate the vector field $X_x \in \ker D_{u^j} \subset \wup(\cp^1,(u^j)^*\tg M)$ such that $X_x(z_*)=x$. In other words, we need to make the assumption that the differential of the map given by evaluation at $z_*$ is surjective on the kernel of $D_{u^j}$.
\par Remark that this assumption is close to the transversality of the evaluation map in definition \ref{regtransi}. Let $\ev_{j+N\zz}$ be the evaluation at $(u^i)$ in $z_*$ when $i \equiv j \modu N$. To ask that
\[
D_{u^\zz} \oplus \dd \ev_{j+N\zz} : \timesi \wup_{u^i}  \to (\timesi L^p_{u^i}) \oplus (\timesi \tg_{u^{j+Ni}(z_*)} M )
\]
is surjective is, given that $J$ is regular in the sense of definition \ref{regtransi}, equivalent to ask that the restriction of $\dd \ev_{u^{j+Ni}}$ to the subspace $\ker D_{u^{j+Ni}}$ be surjective. This condition is naturally expressed in the vocabulary of transversality. Indeed, if in the construction of section \ref{recinf}, it is required, in addition to the gluing between the curves in the chain, to glue another curve at a point $z_*$ (\eg a constant curve), then regularity for this gluing scheme (which obtained from $\zz$ as a tree, by adding a leaf to the integers $j+N\zz$) implies the surjectivity of $\dd \ev_{u^{j+Ni}} \big|_{\ker D_{u^{j+Ni}}}$. This way of presenting our assumption indicates that it is not significantly stronger the one made in the preceding section, particularly for a finite family of curves. For example, it holds for a finite number of curves with appropriate intersection in $\cp^n$ with its usual complex structure.
\par Finally, in order to remain in the setting of a compact family of maps $\{u^i\}$, it will be necessary to restrict to a sufficiently small ball $B_{m_*} \subset \tg_{m_*} M$ such that for all $x \in B_{m_*}$ the curves $\exp_{u^j}X_x$ form a compact family.
\begin{theorem} \label{receval}
Let $(M,J)$ be an almost-complex manifold. Let $u^1, \ldots, u^N$ be a finite family of $J$-holomorphic curves $u^i:\cp^1 \to M$ such that $u^i(\infty) = u^j(0)$ when $j \equiv i+1 \modu N$. Suppose that $J$ is regular in the sense of definition \ref{regtransi} and that $u^j$ is deformable. Let $z_* \in \cp^1 \setminus\{0,\infty\}$ be a marked point and let $m_* = u^j(z_*) \in M$ be its image. Suppose that the curves are uniformly transverse. There exists $c$ and $R \in \rr_>0$ such that for any sequence $\{r_i\}$ satisfying $r_{\sup} = \sup r_i \leq R$, the exists a neighborhood $V_{m_*}$ of $m_*$ such that for all sequence of points $\{m_k\}_{k \in \zz}$ in $V_{m_*}$ there exists a $J$-holomorphic cylinder $u: \CR \to M$ satisfying the following properties: \\
\vspace*{0.5ex}
\hspace*{2ex} R1 - $u$ passes by the prescribed points, \ie 
\[
u(z_{k,*})=m_k,
\]
where $z_{k,*} = \mu_{j+Nk;r}(z_*)$, \\
\vspace*{0.5ex}
\hspace*{2ex} R2 - the behavior of $u$ near the gluing points is as follows: 
\[
\begin{array}{rll}
  &\forall z \in (\phi_1 \circ \mu_{i;r})^{-1}\{A_{r_{i+1},r_{i+1}^{2/3}}(\infty) \},
    & \psi_i \circ u \circ \phi_1 \circ \mu_{i;r} (z) = a_{i;\infty} z + a_{i+1;0} \frac{r_{i+1}^2}{z}+ O(r_{\sup{}}^{1+\eps}) \\
\textrm{and}
  &\forall z \in \mu_{i+1;r}^{-1} \{A_{r_{i+1},r_{i+1}^{2/3}}(0) \},
    & \psi_i \circ u \circ \mu_{i+1;r} (z) = a_{i+1;0} z + a_{i;\infty} \frac{r_{i+1}^2}{z}+ O(r_{\sup{}}^{1+\eps})
\end{array}
\]
where $A_{r_1,r_2}(z_0) = B_{r_2}(z_0) \setminus B_{r_1}(z_0)$, and $\psi_i:M \to \cc^m$ is a local chart that maps $m_i$ to $0$ and such that $(\psi_i^* J) (0)= J_0$.\\
\vspace*{0.5ex}
\hspace*{2ex} R3 - $u$ is close to the curves $u^i$ (or $\exp_{u^j}X$ if it is the deformable curve):
\[
\forall z \in \mu_{i;r}^{-1}(\Sigma_i), \quad d_M(u^{(r);0}(z), u^i(\mu_{i;r}(z))) \leq c ( r_{\sup{}}^{1+\eps} + \delta_i)
\]
where $\delta_i = d_M(m_k,m_*)$ if $i = j+ kN$ and $\delta_i = 0$ else.
\end{theorem}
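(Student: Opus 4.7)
The strategy is to build an approximate cylinder $u^{(r);0}$ that already interpolates $\{m_k\}$ exactly, and then apply a constrained implicit function theorem deforming it to an exact $J$-holomorphic cylinder while keeping the values at the $z_{k,*}$ fixed. Three ingredients are combined: the gluing of Theorem \ref{threcinf}, the free-curve hypothesis providing $J$-holomorphic deformations of $u^j$ through any nearby prescribed point, and the inversion of $D_{u^{(r);0}}$ on the affine subspace of vector fields vanishing at each $z_{k,*}$.

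First, using the surjectivity of evaluation at $z_*$ on $\ker D_{u^j}$ together with a finite-dimensional inverse function theorem applied to the moduli space $\jo{M}^*(A^j,\cp^1;J)$, I would obtain exact $J$-holomorphic curves $v^k:\cp^1 \to M$ with $v^k(z_*)=m_k$ and $\nwup{v^k - u^j} \leq C\, d_M(m_k,m_*)$. Choosing $V_{m_*}$ so small that $d_M(m_k,m_*) \leq r^{1+\eps}$ for all $k$, the endpoints $v^k(0)$ and $v^k(\infty)$ lie within $O(r^{1+\eps})$ of $p_{j-1}$ and $p_j$ respectively. I would then form the periodic chain $(w^i)$ by setting $w^{j+Nk} := v^k$ and $w^i := u^i$ otherwise, and apply the gluing of Theorem \ref{threcinf}, absorbing the small endpoint mismatches into the cutoff $\beta$ of \eqref{ur} (they contribute only $O(r^{1+\eps})$ to $\db_J$ via the dependence of $J$ on the base point). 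The resulting approximate cylinder $u^{(r);0}$ agrees with $w^i \circ \mu_{i;r}$ on the bulk of each segment. Since $z_* \in \cp^1 \setminus \{0,\infty\}$ lies away from the gluing annuli, $u^{(r);0}(z_{k,*}) = v^k(z_*) = m_k$ exactly, so R1 is built in; R2 and R3 follow from the local expansions of Section \ref{chadd}, and $\nr{\db_J u^{(r);0}}_{\ell^\infty(L^p)} = O(r^{1+\eps})$.

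Next, I would apply a constrained version of Proposition \ref{til2} to find $\xi \in \ell^\infty(\wup)$ satisfying $\db_J(\exp_{u^{(r);0}} \xi)=0$ and $\xi(z_{k,*})=0$ for every $k \in \zz$, so that $u := \exp_{u^{(r);0}} \xi$ still passes through $m_k$. The critical point is a bounded right inverse of $D_{u^{(r);0}}$ taking values in the subspace $\{\xi : \xi(z_{k,*})=0\}$. Given $\eta \in \ell^\infty(L^p)$, the unconstrained inverse from Theorem \ref{threcinf} yields $\xi_1$ with $D_{u^{(r);0}} \xi_1 = \eta$; by Lemma \ref{csobli} the values $y_k := \xi_1(z_{k,*})$ are uniformly bounded by $\nr{\eta}_{\ell^\infty(L^p)}$. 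For each $k$, I would transport the kernel element $X_{y_k} \in \ker D_{u^j}$ to a vector field $\wt{X}_k$ along $u^{(r);0}$ (using $v^k$ as intermediary and parallel transport), multiply by a bump $\chi_k$ supported in the interior of the $(j+Nk)$-th segment with $\chi_k(z_{k,*})=1$, and set $\xi := \xi_1 - \sum_k \chi_k \wt{X}_k$. Then $\xi(z_{k,*}) = 0$, while the residual $D_{u^{(r);0}}\xi - \eta$ splits into a term controlled by \eqref{appu} (since $D_{u^j} X_{y_k} = 0$ and $u^{(r);0}$ is $\wup$-close to $v^k$ on the bulk) and a term $(\db\chi_k)\wt{X}_k$ controlled by Lemma \ref{dbetaxi}. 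Both are small uniformly in $k$, so a Neumann series as in \eqref{qur} converts this approximate constrained inverse into an exact one with $r$-independent norm.

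The principal obstacle is ensuring uniformity of this constrained right inverse in both $r$ and $k \in \zz$. Uniformity in $r$ is obtained as in Section \ref{recinf} by exploiting the compactness of the family $\{v^k : m_k \in V_{m_*}\}$, so that the constants of Section \ref{sinv} depend only on this compact family. Uniformity in $k$ requires the $\ell^\infty(L^p)$ formalism: the supports of the bumps $\chi_k$ are placed in pairwise disjoint bulk regions (possible because $z_* \notin \{0,\infty\}$), so that each estimate reduces to a single index and the $\ell^\infty$ structure absorbs the infinite number of gluings and constraints without accumulation. Once this uniform bound is in hand, Proposition \ref{til2} produces $\xi$ with $\nr{\xi}_{\ell^\infty(\wup)} = O(r^{1+\eps})$; the cylinder $u = \exp_{u^{(r);0}} \xi$ then satisfies R1 by construction, and R2, R3 by combining the bounds on $u^{(r);0}$ with Sobolev control on $\xi$.
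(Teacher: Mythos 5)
Your overall strategy (an approximate cylinder already passing through the $m_k$, then a constrained deformation) matches the paper's, but the key middle step — the bounded right inverse of $D_{u^{(r);0}}$ with image in $\{\xi : \xi(z_{k,*})=0\}$ — has a genuine quantitative gap. You build it as $\xi = \xi_1 - \sum_k \chi_k \wt{X}_k$ and claim the residual $D_{u^{(r);0}}\xi - \eta$ is small, invoking \eqref{appu} and Lemma \ref{dbetaxi}. But Lemma \ref{dbetaxi} only gives smallness when the vector field vanishes at the centre of the annulus carrying the logarithmic cutoff; here $\wt{X}_k(z_{k,*}) = y_k \neq 0$, and $\chi_k$ must be supported inside a single segment of the cylinder (the kernel element $X_{y_k}$ only makes sense along $u^j$, so you cannot spread $\chi_k$ over many segments without polluting the other marked points $z_{k',*}$). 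Hence $\nr{(\db \chi_k)\wt{X}_k}_{L^p}$ is of order $C_\chi\,C_{\ker}\,\abs{y_k}$ with $C_\chi = \nr{\nabla\chi_k}_{L^p}$ a fixed constant, and $\abs{y_k} \leq s'_p\,\nr{Q}\,\nr{\eta}_{\ell^\infty(L^p)}$, so $\nr{D_{u^{(r);0}}T - \un}$ is bounded only by a fixed product of constants with no smallness parameter in $r$ or in the size of $V_{m_*}$; there is no reason for it to be $<1$, and the Neumann series \eqref{qur} does not close. This is precisely why the paper performs the kernel correction \emph{before} gluing: it inverts $D_{u^\zz} \oplus \dd\ev_{j+N\zz}$ at the level of the broken curves via $T(\eta,w) = Q_{u^\zz}\eta + q\big(w - \dd\ev\, Q_{u^\zz}\eta\big)$, where $q$ lands in $\ker D_{u^j}$ so that $D$ kills the correction \emph{exactly} (no cutoff, no residual), and only then glues the resulting fields $\xi^i$ with the node-supported cutoffs of \eqref{xir}, for which Lemma \ref{dbetaxi} does apply because the pieces agree at the nodes and the surgery region avoids $z_*$, so the vanishing at $z_{k,*}$ survives. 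Your argument needs this reordering (or an independent proof that $D_u$ restricted to fields vanishing at all $z_{k,*}$ has a uniformly bounded right inverse); as written the constrained inverse is not established.

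A secondary point: replacing $u^{j+Nk}$ by the exact curves $v^k$ breaks the intersection condition $w^i(\infty)=w^{i+1}(0)$ required by Theorem \ref{threcinf}, and your one-line justification misidentifies the dominant new error. It is not the dependence of $J$ on the base point but the cutoff-derivative term $(\nabla\beta)\cdot\big(v^k(0)-p_{j-1}\big)$ (and its analogue at $\infty$), whose $L^p$ norm carries a negative power of $r$ from $\nr{z^{-1}}_{L^p(A_{r^\alpha,r^\gamma})}$; it is only $O(r^{1+\eps})$ if $V_{m_*}$ is shrunk by a definite positive power of $r$ beyond $r^{1+\eps}$. Since the theorem lets $V_{m_*}$ depend on $r$ this is repairable, but it must be said; the paper sidesteps the comparison by feeding the deformation $\exp_{u^j}X_{w_k}$ directly into the pregluing and absorbing all such terms in the estimate $\pnr{\db_J u^{r;(w_k)}}_{\ell^\infty(L^p)} = O(r^{1+\eps})$ before invoking Proposition \ref{til3}, which treats the interpolation conditions as equations solved by the implicit function theorem rather than as constraints frozen into the approximate solution.
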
 
(The maps $\mu_{i;r}$ are the same as the one introduced in {\S}\ref{senreci}.)

\subsection{Implicit function theorem again}
As before we are trying to find a solution to an equation containing a non linear term by an implicit function theorem. However, in addition to $\db_J u =0$, we have to satisfy a sequence of punctual constraints. As we shall see these will not have a significative impact on the arguments. In order to describe the situation, note $z_{k;*}$ the marked points on the cylinder $\CR$ (they will be chosen to be equal to $\mu_{j+Nk;r}(z_*)$ later on), and let $\ev_*:\jo{M}_\CR \to \liz{M}$ be the evaluation map at these points $z_{k;*}$. Even if $\ev_*$ takes value in $M$, we are in a situation where only the curve $u$ and its perturbation by a vector field will intervene. 
\par Since we need the vector fields to be of $\ell^\infty$ norm smaller than the injectivity radius so that the evaluation of $\exp_u \xi$ makes sense, it is better to see the target space of $\ev_*$ as a product of balls in the tangent plane. Let $\tg_{\zz;*} M = \timesi \tg_{u(z_{i;*})}M $, the elements $w \in \tg_{\zz;*} M $ will sometime be written as $w=(w_i)_{i \in \zz}$. 
\par This understood, $\ev_*$ defined in a neighborhood of $u$ takes values in $\tg_{\zz;*} M$. This is actually a linear map if we look at the neighborhood of $u$ as given by vector fields along $u$. The equations to solve are $\db_J u =0$ and $\ev_* u = w$ for some $w \in \tg_{\zz;*} M$. This said, it remains to use the implicit function theorem \cite[Proposition A.3.4]{mds1}.
\begin{e-proposition}\label{til3}
Let $\CR$ be the cylinder and let $p>2$. $\forall c_0, \exists \delta >0 $ such that for any volume form $\dd \textrm{vol}_\CR$ on $\CR$ induced by the $\mu_{i;r}$, any continuous map $u$ and such that $\dd u \in \ell^\infty(L^p)(\tg \CR,u^* \tg M)$, all $\xi_0 \in \ell^\infty(\wup)(\CR,u^*\tg M)$, and all $T_u: \ell^\infty(L^p)(\CR, \Lambda^{0,1} \otimes_J u^*\tg M) \oplus \tg_{\zz,*} M \to \ell^\infty(\wup)(\CR,u^*\tg M) \oplus \tg_{\zz;*} M$ satisfying
\[
  \begin{array}{c}
    s'_p(\dd \textrm{vol}_\CR) \leq c_0, \quad \nr{\dd u}_{\ell^\infty(L^p)} \leq c_0, \quad \nr{\xi_0}_{\ell^\infty(\wup)} \leq \frac{\delta}{8}, \\
 \begin{array}{cc}
  D_u T_u = \un, & \nr{T_u} \leq c_0,\\
 \pnr{ \db_J(\exp_u(\xi_0))}_{\ell^\infty(L^p)} \leq \frac{\delta}{8 c_0}, &\nr{ \xi_0(z_{k;*}) - w_k }_{\ell^\infty} \leq \frac{\delta}{8 c_0}
  \end{array}
\end{array}
\]
there exists a unique $\xi$ such that 
\[
  \begin{array}{c}
    \db_J(\exp_u(\xi_0+\xi))=0, \qquad \xi_0(z_{k;*}) + \xi(z_{k;*}) = w_k, \qquad \nr{\xi+\xi_0}_{\ell^\infty(\wup)} \leq \delta, \\ 
    \nr{\xi}_{\ell^\infty(\wup)} \leq 2 c_0 \big( \pnr{\db_J(\exp_u(\xi_0))}_{\ell^\infty(L^p)} +  \nr{ \xi_0(z_{k;*}) - w_k }_{\ell^\infty} \big).
  \end{array}
\]
\end{e-proposition}
\begin{proof}
We proceed in the same fashion as in the proof of theorem \ref{til2}. Let $c_1$ be the constant of lemma \ref{bds}, and let $\delta \in ]0,1[$ be such that $c_1 \delta < 1 /2c_0$. Then lemma \ref{bds} insures that $\nr{\dd \jo{F}_u(\xi) - D_u} \leq 1/2c_0$ when $\nr{\xi} \leq \delta$. 
The map $\ev_* : U \to (\tg_{m_*} M)^\zz$ is defined for $U \subset \jo{X}_u$ the open set of vector fields whose $\ell^\infty$ norm is less than the injectivity radius. 
With these notations, $\dd \ev_* (\xi) = \dd \ev_* (0)$, and no estimate on the second derivative is required. 
\par The implicit function theorem of \cite[Proposition A.3.4]{mds1} will be used with the following notations: $w$ is an element of $(\tg_{m_*} M)^\zz$ contained in the image of $\ev_*$, 
\[
X = \jo{X}_u, \quad Y = \jo{Y}_u \oplus (\tg_{m_*} M)^\zz, \quad f = (\jo{F}_u, \ev_* - w), \quad x_0=0, \quad c_0 = c 
\]
and with $\delta$ the minimum of the $\delta$ above and of the real number $\delta'$ such that $\nr{\xi}_{L^\infty}$ is less than the injectivity radius of de $M$. Note that $\dd f_x - \dd f_{x_0}$ is bounded by lemma \ref{bds} and as $\dd \ev_* (\xi) = \dd \ev_* (0)$.
\end{proof}
\par In order to prove theorem \ref{receval}, the approximate solution to $f=0$ must be made and the operator $D_u \oplus \ev_*$ must be shown to have a bounded right inverse. Let us go back to $D_{u^\zz}\oplus \ev_{j+N\zz}$. In section \ref{recinf}, it was important that, under the assumption of the regularity of $J$, the map $D_{u^\zz}$ possesses a bounded right inverse (for the $\ell^\infty(L^p)$ and $\ell^\infty(\wup)$ norms). Call this inverse $Q_{u^\zz}$. Let us show that this allows us to construct an inverse to $D_{u^\zz} \oplus \dd \ev_{j+N\zz}$.
\begin{lemma}
If $D_{u^\zz}$ has a bounded right inverse and H4 holds, then a bounded inverse to $D_{u^\zz} \oplus \dd \ev_{j+N\zz}$ exists.
\end{lemma}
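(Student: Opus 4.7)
The plan is to reduce constructing a bounded right inverse for $L:=D_{u^\zz}\oplus\dd\ev_{j+N\zz}$ to building a bounded right inverse
$$S:\tg_{\zz,*}M\longrightarrow\ker D_{u^\zz}$$
for the restricted evaluation $\mathcal{E}:=\dd\ev_{j+N\zz}\big|_{\ker D_{u^\zz}}$. Given such $S$, the map
$$(\eta,w)\longmapsto Q_{u^\zz}\eta+S\bigl(w-\dd\ev_{j+N\zz}(Q_{u^\zz}\eta)\bigr)$$
is a bounded right inverse for $L$; boundedness uses $\nr{Q_{u^\zz}}<\infty$ by hypothesis together with the Sobolev embedding $\ell^\infty(\wup)\hookrightarrow L^\infty$ of Lemma~\ref{csobli}, which makes $\dd\ev_{j+N\zz}$ bounded on $\wup_{u^\zz}$.

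Turning to the construction of $S$, fix $y=(y_k)_{k\in\zz}\in\tg_{\zz,*}M$. By H4 there is a bounded linear section $\sigma:\tg_{m_*}M\to\ker D_{u^j}$ with $\sigma(x)(z_*)=x$; by the periodicity $u^{j+Nk}=u^j$ this yields $X_{y_k}:=\sigma(y_k)\in\ker D_{u^{j+Nk}}$ with $X_{y_k}(z_*)=y_k$ and $\nwup{X_{y_k}}\leq\nr{\sigma}\abs{y_k}$. Place these on the distinguished curves by setting $\zeta^{j+Nk}:=X_{y_k}$. It remains, in each period, to choose intermediate entries $\zeta^{j+Nk+m}\in\ker D_{u^{j+Nk+m}}$ for $1\leq m\leq N-1$ so that the intersection-matching condition defining $\wup_{u^\zz}$ holds. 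Writing out this matching, the intermediate entries per period are forced to satisfy the boundary conditions
$$\zeta^{j+Nk+1}(0)=X_{y_k}(\infty),\qquad \zeta^{j+N(k+1)-1}(\infty)=X_{y_{k+1}}(0),$$
together with $N-2$ interior matching constraints; this is a finite linear system.

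The main obstacle is to show that this finite system is solvable with bounds uniform in $k$ (so as to obtain an $\ell^\infty(\wup)$-bound on $\zeta$ in terms of $\nr{y}_{\ell^\infty}$). Equivalently, one needs surjectivity of the evaluation
$$\bigoplus_{m=1}^{N-1}\ker D_{u^{j+m}}\longrightarrow\tg_{m_{j}}M\oplus\tg_{m_{j+N-1}}M\oplus\bigoplus_{m=1}^{N-2}\tg_{m_{j+m}}M.$$
This is the finite-period restriction of the transversality hypothesis of definition~\ref{regtransi}, which already underlies Lemma~\ref{surji}: the same argument used there to deduce surjectivity of $D_{u^\zz}$ from surjectivity of the $D_{u^i}$ and transversality of $\ev_\zz$ adapts to the finite subchain, the two ``boundary'' evaluations playing the role of additional intersection constraints whose transversality is guaranteed by the same hypothesis. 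Since the chain consists of only $N$ distinct curves and the mismatch data at the boundary of each period is controlled by $\nr{X_{y_k}}_{\wup}+\nr{X_{y_{k+1}}}_{\wup}\lesssim\abs{y_k}+\abs{y_{k+1}}$, the construction of $S$ reduces to a single finite-dimensional linear problem and the bound is uniform in $k$, which is what was needed.
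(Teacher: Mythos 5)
Your reduction and your correction formula $(\eta,w)\mapsto Q_{u^\zz}\eta+S\bigl(w-\dd\ev_{j+N\zz}(Q_{u^\zz}\eta)\bigr)$ are exactly the paper's: there the role of $S$ is played by the map $q$ obtained by applying on each factor $i\in j+N\zz$ the bounded right inverse $q_j:\tg_{m_*}M\to\ker D_{u^j}$ of the evaluation at $z_*$ furnished by H4 (bounded simply because $\ker D_{u^j}$ is finite dimensional), and by $0$ on the remaining factors; no attempt is made there to adjust this correction on the intermediate curves. The entire second half of your argument -- forcing the kernel-valued section to lie in the matched space and restoring the matching conditions by solving a per-period boundary problem on $\bigoplus_{m=1}^{N-1}\ker D_{u^{j+m}}$ -- is your own addition, and it is where the gap lies.

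The solvability of that finite system is not ``the finite-period restriction of definition \ref{regtransi}''. The transversality of $\ev_\zz$ to the diagonal, as exploited in lemma \ref{surji}, only lets you realize prescribed \emph{differences} $\zeta^i(z_{i;\infty})-\zeta^{i+1}(z_{i+1;0})$ modulo the diagonal, and, crucially, it allows contributions from the kernels along \emph{all} the curves of the chain, including $u^{j+N\zz}$. In your system the fields along the deformable curves are frozen equal to $X_{y_k}$, and you must hit prescribed \emph{absolute} node values $\zeta^{j+Nk+1}(0)=X_{y_k}(\infty)$ and $\zeta^{j+Nk+N-1}(\infty)=X_{y_{k+1}}(0)$ using only the intermediate kernels; this is a strictly stronger, ``very free curve''--type surjectivity of the node evaluation on $\bigoplus_{m=1}^{N-1}\ker D_{u^{j+m}}$ that neither H3 nor H4 provides. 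For instance, regularity in the sense of definition \ref{regtransi} is compatible with every $\ker D_{u^{i}}$, $i\not\equiv j$, evaluating at its two nodes only into the diagonal $\{(v,v)\}$, in which case your system is in general unsolvable. Nor can the missing freedom be recovered by also perturbing the fields along $u^{j+Nk}$ by kernel elements: any such perturbation must be corrected at $z_*$ via H4, which changes the node values again, and making that circle close is essentially the joint surjectivity of $D_{u^\zz}\oplus\dd\ev_{j+N\zz}$ on matched tuples -- the very statement being proved. (Your concern about the matching conditions is legitimate, but the way the paper handles it is to take the correction componentwise from H4; if one insists on compatibility with the chain, the appropriate route is the regularity of the augmented gluing scheme described after H4, not an adaptation of lemma \ref{surji}.) So, as written, the construction of $S$ is not established, while the first paragraph of your argument, including the boundedness of $\dd\ev_{j+N\zz}$ via lemma \ref{csobli}, is correct and coincides with the paper.
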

\begin{proof}
The assumption was designed so that, for the structure $J$ given, $\dd \ev_{u^j} \big|_{\ker D_{u^{j}}}$ is surjective on $\tg_{u^j(z_*)} M$. Thus there exists a map $q_j:\tg_{u^j(z_*)} M \to \ker D_{u^{j}}$ such that $\dd \ev_{u^j} \circ q_j = \Id$. This map is bounded since its domain is finite dimensional. Let us introduce $(\tg_{m_*} M)^\zz = \timesi \tg_{u^{j+Ni}(z_*)} M$. Recall that $u^{j+Ni}=u^j$ and $u^j(z_*)=m_*$. Thus, the map
\[
q: (\tg_{m_*} M)^\zz \to \ker D_{u^{j+Ni}}
\]
which reproduce $q_j$ on each factor is bounded from $\ell^\infty(|\cdot|) \to \ell^\infty(\wup)$ where $|\cdot|$ denotes a norm on $\tg_{u^j(z_*)} M$ and $\ell^\infty(|\cdot|)$ is the supremum of these norm on the product. Let $\eta \in \wup_{u^\zz}$ and $w \in (\tg_{m_*} M)^\zz$, define $T: L^p_{u^\zz} \oplus (\tg_{m_*} M)^\zz \to \wup_{u^\zz}$ by
\[
T(\eta, w) = Q_{u^\zz} \eta + q (w - \dd \ev_{u^{j+N\zz}} Q_{u^\zz} \eta).
\]
Since $D_{u^\zz} q =0$, $D_{u^\zz} T(\eta, w) = \eta$ and $\dd \ev_{u^{j+N\zz}} T(\eta, w) = w$. $T$ is the required right inverse to $D_{u^\zz} \oplus \dd \ev_{u^{j+N\zz}}$.
\end{proof}
\indent The following proof is a small modification of the proof of theorem \ref{threcinf}. 
\begin{proof}[Proof of theorem \ref{receval}: ]
We start by describing the approximate solution $u^{r;(w_k)}:\CR \to M$. The points $z_{*,k}$ will be chosen {\it a posteriori} as they will depend on the parameter $r$. This dependence could certainly be avoided by perturbing again the approximate solution, but this would require unnecessary estimates. As described at the end of section \ref{recinf}, $u^{r;(w_k)}$ will be defined by composing the maps $\mu_{i;r}:[i,i+1]\times S^1 \to \Sigma_i$ with $u^{i;r,r}:\Sigma_i \to M$ if $i \nequiv j \modu N$. When $i = j +Nk$, we will first deform $u^j$ by the vector field $X_{w_k}$ in a map $\exp_{u^j} X_{w_k}$, before it is deformed into the a map $u^{j;r,r}:\Sigma_j \to M$ (those are the small $\wup$ deformations defined similarly to $\uor$ from $\uo$ in section \ref{sinv}). 
\par The $(0,1)$-form $\eta$ along $u$ will be split into $\eta^i$ along the $u^{i;r,r}$ by extending with $0$ where it is not defined. From these $\eta^i$, we obtain the vector fields $\xi^i$ along the $u^{i;r,r}$ thanks to the inverse of $D_{u^{\zz;(r_i)}} \oplus \dd \ev_{j+N\zz}$. The $\xi^i$ will have the property that $D_{u^{i;r,r}} \xi^i = \eta^i$ and that if $i = j+Nk$ then $\xi^i(z_*) = 0$. To obtain a vector field $\xi^{r;(w_k)}$ along $u$, it remains to glue these fields as in section \ref{sinv} and in the proof of theorem \ref{threcinf}. Indeed, if $r$ is sufficiently small so that the point $z_*$ will not be contained in the region where the gluing of the vector fields take place, this method gives a $\xi^{r;(w_k)}$ which is $0$ at certain points, which means that the curve displaced by this vector field will take the prescribed value. 
\par The points $z_{k;*}$ are determined only at this step. First, the $r$ is chosen so as to satisfy the constraints mentioned so far but also so that theorem \ref{threcinf} applies. Then, we fix 
\[
z_{k;*} = \mu_{j+Nk;r}^{-1}(z_*). 
\]
Thus, $\xi^{r;(w_k)}(z_{k;*})=0$. Furthermore, the computation of \eqref{brnpinv} remains identical. This construction is an exact inverse for $\ev_*$. It also an approximate inverse to $D_u$ (for the $\ell^\infty(L^p)$ norm) since $r$ has been so chosen. Consequently, this is an approximate inverse for $D_u \oplus \ev_*$. The proper inverse is then obtained and proposition \ref{til3} applies to yield theorem \ref{receval}.
\end{proof}

\subsection{Non-triviality} \label{S:nt}

\par Theorem \ref{receval} shows the existence of a family of pseudo-holomorphic maps which can be parametrized as follows. Note by $\jo{R}: \rr_{>0} \times  B_{m_*}^\zz \to \jo{M}_\CR$ the map obtained by theorem \ref{receval}. In a neighborhood $V_{m_*}$ of a point $m_* = u^j(z_*)$ the map $\jo{R}_r\big((w_k)\big):\CR \to M$ is characterized by the value it takes at $z_{k;*}$. Note that since we took all the parameters $r_i$ to be equal, $z_{k;*}$ is the point $z_{0;*}$ translated by $iNk$. This understood, for fixed $r$, the maps obtained by theorem \ref{receval} are characterized by $m_k = \exp_{m_*} w_k \in V_{m_*}$.
\par Recall that in this only in \S{}\ref{ntfam} and \S{}\ref{p:simp} that the linear independence of the tangent of the curves at intersection points will be used.

\subsubsection{Distinct images.} \label{ntfam}
Another interesting property of this family of $J$-holomorphic applications resides in the fact that, under appropriate assumptions, two curves obtained from $\jo{R}$ have the same image only if they differ by an automorphism. 
\begin{e-proposition}\label{imcjh}
Let $u^i$ where $i = 1, \dots, N$ such that there exists $r_0$ and $B_{m_*} \subset \tg_{m_*} M$ such that $\forall r < r_0$ and $\forall w \in \ell^\infty(B_{m_*})$ the number of points where $u^{r;w}$ is not injective is finite. Then there exists $r_1 < r_0$ and $C \in \rr_{>0}$ such that for all $r<r_1$ and all $w_1,w_2 \in \ell^\infty(B_{m_*})$ such that $\nr{u^{r;w_1}-u^{r;w_2}}_{C^0} < Cr_1$, if $u_1 =\jo{R}_r(w_1)$ and $u_2 =\jo{R}_r(w_2)$ possess the same image then they differ by the precomposition of an automorphism. 
\end{e-proposition}
\begin{proof}
Introduce
\[
\Gamma = \{(z_1,z_2)\subset \CR \times \CR | u_1(z_1) = u_2(z_2) \}.
\]
This is an analytic set (\cf \cite[Proposition 5 and its remark]{Sik}), which is moreover complex, and, since the two curves have same image, of (complex) dimension 1. Note $\rho(r) = O(r^{1+\eps})$ the maximum of the $C^0$ distances between $u^{r;w_k}$ and $u_k =\jo{R}_r(w_k)$. Choose $r_1$ so that $B_{4\rho(r)} \big(u^{r;w_k}(z) \big) \cap u^{r;w_k}(\CR)$ be isomorphic to discs for all $r \leq r_1$. Next, take $C$ so that $C r_1 < 4 \rho(r_1) - 2 \rho(r)$ (for example $C= 2 \rho(r_1)/r_1$). Then, $\nr{u_1(z)-u_2(z)}_{C^0} \leq 2 \rho(r) + C r_1 < 4\rho(r_1)$.
\par Let $\Delta \subset \CR \times \CR$ be the diagonal and let $U_\rho \Delta$ a $\rho$-neighborhood of the latter. Then $\Gamma$ is close to $\Delta' = \cup_{z \in Z} \big( \Delta + z + i N \zz \big)$ where $Z$ is the set of points where the $u^{r;w_k}$ are not injective and $\Delta + c$ is a short notation for the diagonal translated along one of its factors in the product (\ie the set of pairs $(z+c,z)$). These choices made, $\Gamma$ is contained in a neighborhood of these translated diagonals, $U_{4 \rho(r_1)}\Delta'$. 
\par The map $s:(z_1,z_2) \mapsto (z_1, z_2-z_1)$ is an isomorphism of $\CR\times\CR$ on itself which sends the neighborhood $U_{\rho}\Delta$ on $\CR \times D_{\rho}$ (where $D_{\rho}$ is the disc of radius $\rho$). Let $\pi_k: \Gamma \to \CR$ be the projections on each factors. Given that the curves have the same image, these maps are surjective. Thus, $\pi_1 \circ s (\Gamma) \subset \CR$ and $\pi_2 \circ s (\Gamma) \subset D_\rho $. 
 Let $\Gamma_0$ be a connected component of $\Gamma$; this is a closed analytic complex set of dimension 1. So is $s(\Gamma_0)$ which is contained in $\CR \times D_{4 \rho(r_1)}$. This analytic set lifts to a subset of $\cc \times D_{4 \rho(r_1)}$. Describing this set by equations with holomorphic coefficients, one sees that the coefficient must be constant. Consequently this is a line. Thus, $\Gamma_0$ is contained in a translate of the diagonal; in other words for $z_1 \in \pi_1(\Gamma_0)$, $u_1(z_1) = u_2(z_1+c)$. As $\pi_1(\Gamma_0)$ is $\CR$ (note that by the uniqueness of the extension of $J$-holomorphic maps, it would suffice to have it non empty), this means that $u_1(z) = u_2(z+c)$.
\end{proof}
\par There is a natural action of $\cc$ on the maps $\CR \to M$ which is given by translation at the source. For a fixed $r$ (less than the $r_1$ above), identifying all the curves given by theorem \ref{receval} which have the same image will not reduce the dimension significantly. 
\par Indeed, let $\jo{I}: \jo{M}_\CR \to \jo{P}(M)$ be defined by taking the image of curve, $\jo{I}(u) = u(\CR)$. $\cc$ acts by reparametrization on $\jo{R}_r(w)$; this leads us to look at the quotient $\jo{R}_r \big( \ell^\infty(\zz;B_{m_*}) \big) / \cc$. Proposition \ref{imcjh} insures us that $\jo{I}$ is locally injective on the quotient.
\par In order to construct a family of curve with different images, it might also be possible to proceed differently. For example, in the case a single gluing, remark \ref{etrang} indicates that letting the parameter vary gives curves of different images. It would then be tempting to use all the parameters $r_i$ not to be equal to the same value $r$, and use this characteristic in order to deduce the maps have different images. However, it is not possible to obtain this result directly from the implicit function theorem. Indeed, if at a point the parameter is $r_i$, the true solution obtained by proposition \ref{til2} is a perturbation in the $\ell^\infty(\wup)$ norm of the order of $\supp{i \in \zz} r_i$. It is also difficult to introduce a measure of the strangling which can be defined on a curve $u$ which is of class $\wup$. For these reasons, evaluation at a point have been used.

\subsubsection{Simple maps.} \label{p:simp}
 Before we look at curves obtained by $\jo{R}$ which possess the same image, we make a digression to show that a careful choice of parameters allows us to show that the map $u$ does not factorize by a quotient of the cylinder. Recall that $u^{r;w}$ is the approximate solution constructed in order to obtain $\jo{R}_r(w)$. The map $u^{r;0}$ is periodic, in the sense that it factorizes as $u^{r;0}:\CR \surj \CR /iN\zz \to M$.
\begin{lemma}\label{upprec}
Let $\pi:\CR \to \CR / i\zz$ be the quotient to the torus. Let $u: \CR \to M$ be a pseudo-holomorphic map sufficiently $C^0$ close to a map $u_0:\CR \to M$ such that $u_0 = u_0' \circ \pi$ and $u_0'$ is injective. Let $\phi: \CR \to \CR'$ and $u':\CR' \to M$ be such that $u = u' \circ \phi$. Then $\phi$ is periodic: $\phi = \phi_2 \circ \phi_1$ where $\phi_1$ is the quotient of the cylinder by a discrete subgroup (without fixed points) of the automorphisms of $\CR$.
\end{lemma}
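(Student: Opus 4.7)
My plan is to analyze the equivalence relation defined by $\phi$ on $\CR$ and to show that it coincides with translation by a sublattice of $i\zz$, exactly mirroring the argument used in the proof of proposition \ref{imcjh}.

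First I would set up the relevant analytic set. Since $u_0'$ is a continuous injection from the compact torus $\CR/i\zz$ into $M$, it is a homeomorphism onto its image, and hence $u_0'^{-1}$ is uniformly continuous on compact subsets of $u_0(\CR)$. Thus for $u$ sufficiently $C^0$-close to $u_0$, any equality $u(z_1)=u(z_2)$ forces $u_0(z_1)$ and $u_0(z_2)$ to be close in $M$, which in turn forces $z_2-z_1$ to lie within some small distance $\rho$ of $i\zz \subset \CR$. Define
\[
\Gamma_\phi = \{(z_1,z_2)\in\CR\times\CR \mid \phi(z_1)=\phi(z_2)\}.
\]
Since $u=u'\circ\phi$, we have $\Gamma_\phi\subset\Gamma_u$, so $\Gamma_\phi$ is contained in a $\rho$-neighborhood of $\bigcup_{k\in\zz}\{z_2-z_1=ik\}$. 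Moreover $\Gamma_\phi$ is a closed complex analytic subset of complex dimension $1$, since $\phi$ is a non-constant holomorphic map between one-dimensional complex manifolds and therefore has discrete fibers.

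Second, I would transport the proof of proposition \ref{imcjh} verbatim to $\Gamma_\phi$. Apply the biholomorphism $s(z_1,z_2)=(z_1,z_2-z_1)$, under which each connected component $\Gamma_0$ of $\Gamma_\phi$ is sent into $\CR\times D_\rho(ik)$ for some $k$. Lifting to the universal cover $\cc\times D_\rho(ik)$ yields a one-dimensional analytic set that projects onto $\cc$ and is bounded in the second factor; writing it locally as a graph of a holomorphic function $\cc\to D_\rho(ik)$ and invoking Liouville shows the function is constant. Therefore each component of $\Gamma_\phi$ has the form $\{z_2=z_1+c\}$ for a single $c$, and altogether
\[
\Gamma_\phi \;=\; \bigsqcup_{c\in Z}\{z_2=z_1+c\}
\]
for some set $Z\subset\CR$ contained in the $\rho$-neighborhood of $i\zz$. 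Since $\Gamma_\phi$ is the graph of an equivalence relation (being the pullback of the diagonal under $\phi\times\phi$), the set $Z$ is a subgroup of $(\CR,+)$, and it is discrete because $\phi$ has discrete fibers.

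Third, I would classify $Z$. Writing $\CR=(\rr/\zz)\times\rr$, the subgroup $i\zz$ sits on the second factor at $x=0$. A discrete subgroup of $\CR$ is either trivial or cyclic with generator $\zeta=(\alpha,\beta)$. The constraint that every multiple $n\zeta$ lies in the $\rho$-neighborhood of $i\zz=\{(0,m)\}$ forces $n\alpha\equiv 0\pmod 1$ and $n\beta$ close to an integer for every $n\in\zz$; a short elementary argument shows this is possible only if $\alpha=0$ and $\beta\in\zz$. Hence $Z=ik\zz$ for some integer $k\geq 0$. Setting $\phi_1\colon\CR\to\CR/ik\zz$ equal to the quotient by the discrete group of translations $z\mapsto z+ikm$ (which act freely), the relation $\phi(z_1)=\phi(z_2)\iff\phi_1(z_1)=\phi_1(z_2)$ is exactly what we have just proved, so $\phi$ descends to a holomorphic map $\phi_2\colon\CR/ik\zz\to\CR'$ with $\phi=\phi_2\circ\phi_1$.

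The main obstacle is the step classifying $Z$: one must combine the algebraic fact that $Z$ is a discrete subgroup with the geometric fact that $Z$ lies in a small neighborhood of $i\zz$ to rule out nontrivial rotational or irrational components. Everything else is a direct transcription of the analytic argument from the proof of proposition \ref{imcjh}.
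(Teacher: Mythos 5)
Your proposal is correct in substance, but it takes a genuinely different route from the paper. The paper's proof constructs, over exhausting pieces $I_{x;k}$ of the cylinder, holomorphic ``averages of the $\phi$-preimages'' $\psi_{x;k}:\CR'\to\CR/i\zz$ (using the local factorization $f=g^d$ to handle critical points), controls their jump discontinuities, extracts a limit $\psi$ by a diagonal argument over a dense set, and then uses boundedness of $\psi\circ\phi-\pi$ to get $\psi\circ\phi(z)=\pi(z)+c$, whence $\phi$ is a covering and factors through a quotient by translations. You instead analyze the analytic set $\Gamma_\phi=\{(z_1,z_2)\,:\,\phi(z_1)=\phi(z_2)\}$ directly: the $C^0$-closeness of $u$ to $u_0=u_0'\circ\pi$ together with injectivity of $u_0'$ on the compact torus confines $\Gamma_\phi$ to a small neighborhood of the translated diagonals, the Liouville argument already used for proposition \ref{imcjh} forces each connected component to be a full translate $\{z_2=z_1+c\}$, and the period set $Z$ is then a discrete subgroup of $\CR$ trapped near $i\zz$, hence contained in $i\zz$, giving the factorization $\phi=\phi_2\circ\phi_1$ at once. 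This buys a shorter argument that bypasses the averaging construction and its convergence issues, and under the stated hypotheses it even pins the deck group down to vertical translations $iK\zz$, sharper than what the lemma records (the paper recovers the structure of the deck group afterwards, in \S\ref{p:simp}, by a separate case discussion). Two small points to tighten: at branch points of the projection of a component onto the first factor the set is not literally a graph, so replace ``graph plus Liouville'' by the elementary symmetric functions of the boundedly many, uniformly bounded fiber points (bounded entire, hence constant --- exactly the device invoked in the proof of proposition \ref{imcjh}, made legitimate here because the projection is proper with finite fibers); and discrete subgroups of $\cc/\zz$ need not be cyclic (they may carry a torsion part generated by some $1/n$), but since your ``all multiples stay near $i\zz$'' argument applies to each element of $Z$ separately, the conclusion $Z\subseteq i\zz$, and with it the factorization, is unaffected.
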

\begin{proof}
Let $w \in \CR'$ then $\phi^{-1}(w) \subset P_w := u^{-1}(u'(w))$. Since $u_0'$ is injective, $P_w$ is contained in a ball and its translates. Let $B_w \subset \CR /i\zz$ such that $B_w := \inter{z \in P_w}{} \pi(B_\rho (z))$ where $\rho = \nr{u - u_0}_{C^0}$ and $\pi: \CR \to \CR/i\zz$ is the projection on the torus. In particular, $\phi^{-1}(w) \subset \pi^{-1}(B_w)$.
\par We wish to show that $\phi$ is periodic. In order to avoid an accumulation, the number of elements of $\phi^{-1}(w)$ in a component of $B_w$ has to be bounded. Let 
\[
I_{x;k} = i]x-k-\tfrac{1}{2},x+k+\tfrac{1}{2}[ \times \rr/ \zz,
\]
where $x \in \rr$ and $k \in \zz_{>0}$, a piece of the cylinder containing $2k+1$ connected components of $\pi^{-1}(B_w)$. We wish to construct a $J$-holomorphic map which associates to $w \in \CR'$ the mean of its preimages.
\par In order to do so, let us first describe this for a proper and non-constant holomorphic function $f :U \to \CR'$ where $U \subset \CR$. A problem might occur at critical points of $f$. However, locally is $f(z) = a_d z^d + O(z^{d+1})$, there exists a function $g$ such that $f =g^d$ and $g'(0)\neq 0$. In particular, $g$ is invertible. Furthermore, $f(z) = w \ssi g(z) \in \{x| x^d = w\}$. Thus, if $h(z) = \somme{j \geq 0}{} a_j z^j$ is a polynomial and $g^{-1}(x)^j  = \sum_{k \geq 0} b_{j,k}  x^k$ the local expansion of $g$, the sum of the values of $h$ on the preimages of $w$ will be written as
\[
\begin{array} {rll}
\Somme{z \in f^{-1}(w)}{} h(z) 
     &= \Somme{z \in g^{-1}(w^{1/d})}{} h(z) 
     &= \Somme{z \in g^{-1}(w^{1/d})}{} \Somme{j \geq 0}{} a_j z^j\\
     &= \Somme{j \geq 0}{} a_j \Somme{k \geq 0}{} \Somme{x \in w^{1/d}}{} b_{j,k}  x^k  
     &= \Somme{j \geq 0}{} \Somme{k \geq 0}{} a_j d b_{j,dk} w^k  \\
\end{array}
\]
and is a holomorphic function of $w$.
\par In the case of interest to us, when $\phi$ is restricted to $I_{x;k}$, the function which takes the sum of the preimages is well-defined. Let $F_{x;k}(w) = \phi \bv_{I_{x;k}} ^{-1}(w)$. Let $\psi_{x;k}:\CR' \to \CR/i\zz$ be the sequence of function given by
\[
\psi_{x;k}(w) = \frac{1}{2k+1} \somme{z \in F_{x;k}(w)}{} \pi(z) . 
\]
In a neighborhood of $w$ these function are holomorphic. However they present some discontinuities when moving $w$ makes point of the preimage $\phi^{-1}(w)$ leave or enter $I_{x;k}$. In particular, $\psi_{w;k}$ is holomorphic in a neighborhood of $w$ whose size is bounded from below (by the distance from $B_w$ to the boundary of $I_{x;k}$). Furthermore, Since the $\psi_{x;k}$ are holomorphic outside these jumps, it is possible to extract convergent subsequences for each $x$. Note that the size of these discontinuities is bounded by $K_1/k$ for some $K_1 \in \rr_{>0}$. This bound enables to get that, for all $k$,
\[
|\psi_{x;k}-\psi_{x';k}| \leq K_2 \frac{|x-x'|}{k}.
\]
We start by choosing a sequence of points $\{w_i\}$ of $\CR'$ which is dense (but as the $\psi_{w;k}$ do not present jumps near $w$, it would suffice to take a sufficiently small net). Next choose a subsequence $\{n^{(1)}_k\}$ for which $\psi_{w_1;k}$ converges. Then this subsequence $\{n^{(l)}_k \}$ is refined again in another subsequence $\{n^{(l+1)}_k \}$ which makes $\psi_{w_l;n^{(l)}_k}$ converge. The sequence $\{n^{(k)}_k \}$ will make all the $\psi_{w_i;n}$ converge to holomorphic functions. Furthermore, the difference between the jumps tends to $0$, thus the limit of these sequence will not depend on the point $w_i$ chosen. Denote this limit by $\psi$; this is the desired averaging function.
\par The map $\psi \circ \phi: \CR \to \CR/i\zz$ has the property that $|\psi \circ \phi (z) - \pi(z)| < \rho$ since the two points belong to $B_w$. If $\rho$ is less than the injectivity radius of $\CR$, this enables to define a function $f(z)= \psi \circ \phi (z) - \pi(z) \in \cc$ which extends to $X$ and is bounded. Consequently, $\exists c \in \cc$ such that $\psi \circ \phi (z) = z+c$ in $\CR/i\zz$. In particular, $\phi$ is a covering map.
\par Hence the map $\phi$ factors through a quotient of $\cc$ by a discrete subgroup without fixed points of the automorphism group of $\cc$. This subgroup necessarily contains translations, and thus, contains only translations. Since the fundamental group of $\CR$ is abelian, this is the quotient by some group action.
\end{proof}
\par The most restrictive assumption is the fact that $u_0'$ is injective. This means that the pseudo-holomorphic curves $u^k$ must not have any other intersection (or self-intersection) apart from the ones required to make the gluing. 
\par Indeed, by taking $B_{m_*}$ smaller if necessary, lemma \ref{upprec} can then be applied to curves obtained by $\jo{R}$. Suppose that $u= \jo{R}_r(w)$ can be written $u = u' \circ \phi$, where $\phi: \CR \to \CR'$ is a quotient map by a discrete subgroup without fixed points. 
\par These subgroups of the automorphisms of $\CR = \cc / \zz$ possess at most a finite generator (of the form $1 /n$ where $n \in \zz$) and an infinite generator (if it possesses an imaginary part). This is seen by looking at the corresponding discrete subgroup without fixed point of automorphisms of $\cc$, \ie which is a lattice of rank $1$ or $2$.
\par Thus, if $\phi$ is the quotient by an infinite subgroup, then there exists $c \in \cc \setminus \rr$ such that $ u(z+c) = u(z)$. Let $\pi: \cc \surj \cc / (\zz \oplus i N \zz)$, for all $\delta$ there exists an integer $n_c(\delta)$ such that $\pi \big(n_c(\delta) c \big) < \delta$. In other words, if $u$ is periodic, the $w_k$ must be almost periodic: $w_k - w_{k+n_c} < \rho + O(\delta)$ (where $\rho = O(r^{1+\eps})$ is the distance from $u = \jo{R}_r\big((w_k)\big)$ to the approximate solution $u^{r;(w_k)}$). For $r$ is sufficiently small and one of the $w_k$ is apart from the others, $u$ will not be periodic. 
\par Suppose now that $\phi$ is a finite quotient map. Consider a segment $I_k = i[k,k+1] \times \rr/ \zz$ where $u$ is close to the map $u^k$. $u$ is again periodic, but this time in the sense that there exists $c \in \rr$ such that $u(z+c) = u(z)$. Note that $\mu_{k;r}(z) \mapsto \mu_{k;r}(z +c)$ will correspond to an automorphism $\phi'$ of $\cp^1$ which fixes $0$ and $\infty$. Hence $u^k \circ \phi'(z) - u^k(z) \leq \rho$ for $z \in \mu_{k;r}(I_k)$. Consequently if one of the curves $u^k$ is simple and its image is not contained in a small neighborhood, that is of size $O(\rho) = O(r^{1+\eps})$. Thus if a curve is of positive energy, this situation cannot happen. 
\par Recall that a map $u:\SR \to M$ is said simple if when $u = u' \circ \phi$ where $\phi: \SR \to \SR'$ and $u': \SR' \to M$ then $\phi$ is a degree 1 covering map of Riemann surfaces (an automorphism). The previous discussion can be summarized as follows.
\begin{corollary}
If for a $k \in \{1,\ldots, N\}$, $u^k$ is a simple curve of positive energy, there exists a ball $B_{m_*}$ and a $r_0$ such that for all $r<r_0$ and for all $w \in \ell^\infty(\zz;B_{m_*})$ of which a coordinate is at distance at least $\frac{1}{10} \diam B_{m_*}$ from the others, $u = \jo{R}_r(w)$ is a simple map from $\CR$ to $M$.
\end{corollary}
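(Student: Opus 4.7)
The proof assembles the three paragraphs preceding the corollary into a precise argument. The plan is to apply lemma \ref{upprec} to $u = \jo{R}_r(w)$ so that any nontrivial factorization $u = u' \circ \phi$ forces $\phi$ to be the quotient by a discrete fixed-point-free subgroup of $\mathrm{Aut}(\CR)$, and then to exclude each such subgroup using either the spread hypothesis on $w$ (for quotients with a vertical generator) or the hypothesis on $u^k$ (for the purely horizontal finite quotient).

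First I would check the hypotheses of lemma \ref{upprec}. By H1 the chain $u^1, \ldots, u^N$ is $N$-periodic, so the approximate solution $u^{r;0}$ factors through the torus $\CR / iN\zz$. Shrinking $B_{m_*}$ and invoking H2 together with the assumption in the statement of the corollary that the $u^k$ have no extra intersections, the perturbed approximate solution $u^{r;w}$ still descends to an injective map on $\CR / iN\zz$ for every $w \in \ell^\infty(\zz;B_{m_*})$: in each fundamental segment theorem \ref{receval}.R3 keeps the image in a thickening of a single $u^k(\cp^1)$ (or of $\exp_{u^j} X$) of uniform width, and two distinct preimages would force either an unadmitted self-intersection of some $u^k$ or a tangential intersection at a gluing point, both of which are ruled out. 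Since $u = \jo{R}_r(w)$ is $C^0$-close to $u^{r;w}$ by theorem \ref{receval}, lemma \ref{upprec} yields that $\phi = \phi_2 \circ \phi_1$ with $\phi_1$ the quotient of $\CR$ by a discrete fixed-point-free subgroup $\Lambda \subset \mathrm{Aut}(\CR)$. Lifting to $\cc$, $\Lambda$ is generated by at most one finite-order element (a translation by $\tfrac{1}{n}$ for some $n \in \zz_{\geq 1}$) and at most one translation $c$ with $\mathrm{Im}\,c \neq 0$.

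Case 1: $\Lambda$ contains a translation $c$ with nonzero imaginary part. Then $u(z+c) = u(z)$ globally. Since the prescribed points $z_{k;*}$ differ by $iNk$, the orbit of $c$ modulo $\zz \oplus iN\zz$ is dense in $\CR / iN\zz$, so for every $\delta > 0$ there is an integer $n_c(\delta)$ with $\pi(n_c(\delta)\,c)$ of modulus less than $\delta$ in $\CR / (\zz \oplus iN\zz)$. Evaluating at $z_{k;*}$ then gives $|w_k - w_{k+n_c(\delta)}| \leq \rho + O(\delta)$ with $\rho = O(r^{1+\eps})$ the distance from $u$ to $u^{r;w}$. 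Taking $\delta$ and $r_0$ small enough so that $\rho + O(\delta) < \tfrac{1}{10}\diam B_{m_*}$ contradicts the spread hypothesis on $w$.

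Case 2: $\Lambda$ is the finite group generated by $\tfrac{1}{n}$, $n \geq 2$, so there is a nonzero $c \in \rr$ with $u(z+c) = u(z)$. Restrict to a segment $I_k$ on which $u$ is close to the simple curve $u^k$ of positive energy. Under $\mu_{k;r}$ the translation $z \mapsto z + c$ pulls back to a nontrivial automorphism $\phi'$ of $\cp^1$ fixing $0$ and $\infty$, i.e.\ $\phi'(z) = \lambda z$ with $\lambda \neq 1$, and theorem \ref{receval}.R3 gives $\pnr{u^k \circ \phi' - u^k}_{C^0(\mu_{k;r}(I_k))} = O(r^{1+\eps})$. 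Now simplicity of $u^k$ implies that $u^k \circ \phi' \neq u^k$, and positive energy implies that the image of $u^k$ is not contained in any neighborhood of size $O(r^{1+\eps})$ of $u^k \circ \phi'(\cp^1)$; hence $\min_{\phi' \neq \mathrm{id}} \pnr{u^k \circ \phi' - u^k}_{C^0}$ over automorphisms fixing $\{0,\infty\}$ that arise from translations by $c \in \tfrac{1}{n}\zz \cap [-\tfrac{1}{2},\tfrac{1}{2}]$ is bounded below by a constant depending only on $u^k$. For $r < r_0$ small enough this lower bound exceeds $O(r^{1+\eps})$, producing the desired contradiction.

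The main obstacle is the quantitative step in Case 2: turning the qualitative statements ``$u^k$ is simple'' and ``$u^k$ has positive energy'' into a uniform positive lower bound for $\pnr{u^k \circ \phi' - u^k}_{C^0}$ over all nontrivial $\phi'$ of the relevant form, so that it can beat the shrinking error $O(r^{1+\eps})$. Once this bound is in place, combining Cases 1 and 2 with lemma \ref{upprec} closes the argument.
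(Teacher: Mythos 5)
Your argument is essentially the paper's: the corollary is stated there as a summary of the discussion in \S{}\ref{p:simp}, namely apply lemma \ref{upprec}, then split according to whether the deck group of $\phi$ has a generator with nonzero imaginary part (excluded via the recurrence of multiples of $c$ modulo $\zz\oplus iN\zz$ and the spread condition on the $w_k$) or is the finite group generated by $1/n$ (excluded via simplicity and positive energy of $u^k$). Your Case 1 is the paper's argument, stated slightly more carefully (only recurrence, not density, is true or needed, and one should take the returning multiple $n_c c$ large so that the nearby lattice point has a nonzero $iN\zz$-component).

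The one place where your write-up, as stated, does not work is the quantitative claim in Case 2: the infimum of $\nr{u^k\circ\phi'-u^k}_{C^0}$ over \emph{all} nontrivial $\phi'$ arising from translations $c\in\tfrac1n\zz$ is not bounded below by a constant depending only on $u^k$, since $c=1/n$ can be arbitrarily small and a rotation by angle $2\pi/n$ moves $u^k$ by at most $O(\nr{\dd u^k}_{C^0}/n)$. You flag this yourself as ``the main obstacle,'' but it is exactly the point that must be closed, and the fix is short: the invariance $u(z+c)=u(z)$ is exact, so it holds for every multiple $jc$; choosing $j$ with $jc\in[\tfrac13,\tfrac12]$ (say) reduces to rotations $\phi'$ with angle bounded away from $0$ and $2\pi$. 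On that compact set of rotations, $\theta\mapsto\nr{u^k\circ R_\theta-u^k}_{C^0}$ is continuous and vanishes nowhere --- exact invariance under a finite-order rotation would contradict simplicity of $u^k$, and under an infinite-order one would force $u^k$ to be constant on circles, contradicting positive energy --- so it has a positive minimum, which beats $O(r^{1+\eps})$ for $r$ small. The paper is itself terse (indeed garbled) at this step, so with this patch your proposal matches its intended argument.
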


\subsubsection{Mean dimension} \label{s:mdm}

The map $\jo{R}$ given by theorem \ref{receval} will have a infinite dimensional space of maps as its image and mean dimension (as introduced in \cite{Gro} is a natural way to measure the size of this image. Mean dimension is a topological dynamical invariant, and here the group of automorphisms of $\CR$ (which can be identified with $\CR$ itself) acts naturally on $\img(\jo{R})$ by reparametrization at the source. 
\par Let $\jo{M}_\CR$ be the space of pseudo-holomorphic maps $\CR \to M$, and $\jo{M}_{\CR, c}$ be the subspace of those whose differential is bounded (in $C^0$ norm) by $c$. To speak of mean dimension we need to ensure that the metric used makes the space compact. Thus we will use the topology of uniform convergence on compacts. For $u,u': \CR \to M$, we will use the distance
\begin{equation}  \label{cyldist}
d(u,u') = \supp{k \in \zz_{>0}} 2^{-k} \supp{z \in [-k,k] \times S^1} d_M(u(z),u'(z))
\end{equation}
which induces an equivalent topology.
\par To a sequence $\{w_k\}$ of vectors in a small ball $B_{m_*}$ around $\tg_{m_*}M$ we can associate a pseudo-holomorphic cylinder. The distance \eqref{cyldist} between curves associated to $\{w_k\}$ and $\{w'_k\}$ is bounded from below by 
\[ 
d'(w,w') = \supp{k \in \zz_{>0}} 2^{-|k|} \nr{w_k - w'_k}.
\]
If on $B_{m_*}^\zz$ the metric $d'$ above is used, $\jo{R}_r : B_{m_*}^\zz \to \jo{M}_\CR$ does not reduces the distances. Furthermore, $\zz$ acts on $B_{m_*}^\zz$ by shifting and this action is (up to some identification) equivariant if $iN\zz$ is seen as a subgroup of the automorphisms of the cylinder (by translation). More precisely, since deformations only happen on the curve $u^j$ amongst the $N$ curves which form the chain, the shift of an integer in $B_{m_*}^\zz$ will correspond to the translation by $iN$ on the cylinder (here $\CR = \cc / \zz$). 
\par The mean dimension of $(\tg_{m_*} M)^\zz$ for the topology induced by $d'$ (this is the product topology) and the action of $\zz$ is $\dim \tg_{m_*} M = \dim M$ (see \cite{Gro}). Taking into account the covolume of $iN\zz$ yields the following corollary.
\begin{corollary}\label{dimpos}
If there exists an almost complex structure $J$ and a family of curves $u^k$ satisfying the assumptions of theorem \ref{receval} and $\nr{\dd u^k}_{C^0} \leq c$, then the mean dimension of $\jo{M}_{\CR,2c}$ for the action of the automorphism group of the cylinder is at least $\dim M / N >0$.
\end{corollary}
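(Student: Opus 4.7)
The strategy is to exhibit, via the interpolation map $\jo{R}_r$ of Theorem \ref{receval}, an equivariant topological embedding of a shift space into $\jo{M}_{\CR,2c}$, and then transfer the standard mean-dimension computation from the shift side by invoking monotonicity. Fix $r < r_0$ small enough that Theorem \ref{receval} applies, and shrink the neighborhood $B_{m_*}\subset \tg_{m_*}M$ if necessary.

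First, I would check that $\jo{R}_r$ does take values in $\jo{M}_{\CR,2c}$. Property R3 gives $C^0$-closeness of $\jo{R}_r(w)$ to the chain $\{u^k\}$ (and to $\exp_{u^j}X$, whose differential is uniformly controlled on $B_{m_*}$), and the $\ell^\infty(\wup)$ control provided by Proposition \ref{til3} upgrades this to $C^1$-closeness via the uniform Sobolev embedding of Lemma \ref{csobli}. Since $\nr{\dd u^k}_{C^0}\leq c$, one concludes $\nr{\dd \jo{R}_r(w)}_{C^0}\leq 2c$ uniformly in $w\in B_{m_*}^\zz$.

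Second, I would verify the non-contraction of distance. By property R1, $\jo{R}_r(w)(z_{k;*}) = \exp_{m_*}w_k$, and since $\exp_{m_*}$ is bi-Lipschitz on $B_{m_*}$ there is $\kappa>0$ such that
\[
d_M\bigl(\jo{R}_r(w)(z_{k;*}),\jo{R}_r(w')(z_{k;*})\bigr) \geq \kappa\nr{w_k-w'_k}
\]
for all $w,w'\in B_{m_*}^\zz$. Because $z_{k;*}=\mu_{j+Nk;r}(z_*)$ lies in the strip $[Nk-\tfrac12,Nk+\tfrac12]\times S^1$, evaluating the metric \eqref{cyldist} at these points shows that the pullback of $d$ by $\jo{R}_r$ dominates a positive multiple of the product metric $d'$ on $B_{m_*}^\zz$. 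Hence $\jo{R}_r$ is a topological embedding. Third, the marked points satisfy $z_{k;*}=z_{0;*}+iNk$ (since $\mu_{j+Nk;r}$ differs from $\mu_{j;r}$ by a translation of $iN$ on $\CR$); combined with the uniqueness part of Proposition \ref{til3}, this means that $\jo{R}_r$ intertwines the shift $(w_k)\mapsto(w_{k+1})$ on $B_{m_*}^\zz$ with the translation $u\mapsto u(\cdot+iN)$ on $\jo{M}_{\CR,2c}$.

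Finally, regard $\jo{M}_{\CR,2c}$ as a $\zz$-space via $T:u\mapsto u(\cdot+i)$, which is a subgroup of the translation action of $\CR$ on itself. Then $\jo{R}_r$ is a $\zz$-equivariant topological embedding of $(B_{m_*}^\zz,\textrm{shift})$ into $(\jo{M}_{\CR,2c},T^N)$. The standard computation of \cite[{\S}3.3]{Gro} gives $\mdm(B_{m_*}^\zz,\textrm{shift})=\dim B_{m_*}=\dim M$, and monotonicity of mean dimension under equivariant embeddings yields $\mdm(\jo{M}_{\CR,2c},T^N)\geq \dim M$. Replacing a $\zz$-action by its $N$-th power multiplies mean dimension by $N$, so $\mdm(\jo{M}_{\CR,2c},T)\geq \dim M/N$. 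Since $T$ sits inside the full automorphism group of $\CR$, the mean dimension for the latter is at least as large, completing the proof.

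The main obstacle is the second step: one must argue that the product metric on $B_{m_*}^\zz$ is genuinely dominated by the pulled-back cylinder distance, which requires the prescribed-value property R1 and the uniform placement of $z_{k;*}$ inside a bounded strip. A secondary but routine point of care is the rescaling of mean dimension under passage from the $N$-th power $T^N$ back to $T$, and the monotonicity under passage from $T$ to the full continuous automorphism action of $\CR$.
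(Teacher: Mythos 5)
Your proposal is correct and follows essentially the paper's own argument: use $\jo{R}_r$ and property R1 to embed the shift space $B_{m_*}^\zz$ (of mean dimension $\dim M$ by Gromov's computation) equivariantly into $\jo{M}_{\CR,2c}$, with the shift corresponding to translation by $iN$, and then pass from the $iN\zz$-subaction to the full automorphism group by the covolume factor $N$. The one imprecision is your claim that the $\ell^\infty(\wup)$ control upgrades $C^0$-closeness to $C^1$-closeness by Sobolev embedding --- $\wup\hookrightarrow L^\infty$ only controls the $C^0$ norm, and the bound $\nr{\dd\jo{R}_r(w)}_{C^0}\leq 2c$ really rests on the explicit bound of lemma \ref{c0du} for the approximate solution together with elliptic regularity for the $J$-holomorphic perturbation --- but this is a point the paper itself leaves implicit.
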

\par There are at least two other ways to obtain a large family of maps. First, suppose there exists a pseudo-holomorphic $u:\cp^1 \to M$ or $u':\CR \to M$. Then one can precompose $u$ or $u'$ by holomorphic maps $\CR \to \cp^1$ or $\CR \to \CR$. This would suffice to generate a family of pseudo-holomorphic which is sufficiently big. However, they would all have their image contained in the image of $u$ or $u'$. Note however that this quantity is finite if we restrict to maps of bounded derivative, and that the image of these maps (as subsets of $M$) is actually rather small.
\par Furthermore, we could be tempted to use directly theorem \ref{threcinf}. To do so, suppose there are $J$-holomorphic maps $u^i: \cp^1 \to M$ where $i=1,\ldots, N$ such that $u^i$ has a point of intersection with $u^j$ if $j \equiv i \pm 1 \modu N$. Being finite, this family gives rise to $\{u^i\}_{i \in \zz}$ which satisfies the assumption of theorem \ref{threcinf} by defining $u^k = u^i$ when $k \equiv i \modu N$ (theorem \ref{threcinf} will again be used on such a finite family of curves). Before gluing those curves $\{u^i\}$ in a cylinder, it is however possible to precompose them by an automorphism fixing the two points which links $u^i$ to its neighbor in the chain, $u^{i-1}$ and $u^{i+1}$. Let $\theta_i$ be these automorphisms fixing $z_{i;0}$ and $z_{i;\infty}$, the maps $\{u^i \circ \theta_i\}$ are another family of maps satisfying the assumption of theorem \ref{threcinf}. By taking all possible $\theta_i$ this will give rise to a family of positive mean dimension (and with bounded differential). It is unfortunately hard to show that the members of this family have distinct images.
\par Thus what theorem \ref{receval} achieves by corollary \ref{dimpos} is to produces family of cylinders (independently of their parametrization) inside the manifold whose mean dimension is positive.

%






\end{document}